\title[K-polystability of log del Pezzo pairs]{On K-polystability for log del Pezzo pairs 
of Maeda type}
\author{Kento Fujita} 
\date{\today}
\subjclass[2010]{Primary 14J45; Secondary 14L24}
\keywords{Fano varieties, K-stability}
\address{Department of Mathematics, Graduate School of Science, Osaka University, 
Toyonaka, Osaka 560-0043, Japan}
\email{fujita@math.sci.osaka-u.ac.jp}
\newcommand{\pr}{\mathbb{P}}
\newcommand{\Z}{\mathbb{Z}}
\newcommand{\Q}{\mathbb{Q}}
\newcommand{\R}{\mathbb{R}}
\newcommand{\C}{\mathbb{C}}
\newcommand{\F}{\mathbb{F}}
\newcommand{\A}{\mathbb{A}}
\newcommand{\G}{\mathbb{G}}
\newcommand{\Supp}{\operatorname{Supp}}
\newcommand{\Pic}{\operatorname{Pic}}
\newcommand{\Aut}{\operatorname{Aut}}
\newcommand{\Proj}{\operatorname{Proj}}
\newcommand{\ord}{\operatorname{ord}}
\newcommand{\vol}{\operatorname{vol}}
\newcommand{\PGL}{\operatorname{PGL}}
\newcommand{\diag}{\operatorname{diag}}
\newcommand{\coeff}{\operatorname{coeff}}
\newcommand{\MOD}{\operatorname{mod}}
\newcommand{\sC}{\mathcal{C}}
\newcommand{\sO}{\mathcal{O}}
\newcommand{\sX}{\mathcal{X}}
\newcommand{\sF}{\mathcal{F}}
\newtheorem{thm}{Theorem}[section]
\newtheorem{lemma}[thm]{Lemma}
\newtheorem{proposition}[thm]{Proposition}
\newtheorem{corollary}[thm]{Corollary}
\theoremstyle{definition}
\newtheorem{definition}[thm]{Definition}
\newtheorem{remark}[thm]{Remark}
\newtheorem{example}[thm]{Example}
\newtheorem*{ack}{Acknowledgments}
\begin{document}

\maketitle 

\begin{abstract}
We give an algebraic proof for which log del Pezzo pairs of Maeda type are 
K-polystable or not. 
If the base field is the complex number field, then the result is already 
known by Li and Sun. 
\end{abstract}

\setcounter{tocdepth}{1}
\tableofcontents

\section{Introduction}\label{intro_section}

We work over an arbitrary algebraically closed field $\Bbbk$ with the characteristic zero. 
Let $X$ be a \emph{Fano manifold}, that is, $X$ is a smooth projective variety 
over $\Bbbk$ such that the anti-canonical divisor $-K_X$ is ample. We are interested in 
the problem whether $X$ is \emph{K-polystable} or not. In fact, if $\Bbbk$ is equal to the 
complex number field $\C$, then 
K-polystability of $X$ is known to be equivalent to the existence of K\"ahler-Einstein 
metrics on $X$ thanks to the works \cite{tian, don, stoppa, B, CDS1, CDS2, CDS3, tian2} 
and references therein. It is natural to consider K-polystability for not only 
Fano manifolds but also \emph{log Fano pairs} $(X, \Delta)$ 
(see Definition \ref{logfano_dfn} \eqref{logfano_dfn4}). 
However, in general, it is difficult to test K-polystability 
purely algebraically. Recently, Li, Wang and Xu in \cite[Theorem 1.4]{LWX} 
gave a purely algebraic proof for which toric log Fano pairs are K-polystable or not. 
However, when a log Fano pair is not a toric pair, it is difficult to test K-polystability. 
See also Remark \ref{PPKss_rmk}. 

In this article, we mainly consider K-polystability of \emph{log del Pezzo pairs}, 
that is, log Fano pairs of dimension two. 
The purpose of this article is to give an algebraic proof for K-polystability of 
the log del Pezzo pair $(\pr^2, \delta C)$, where 
$\delta$ is a non-negative rational number with $\delta<3/4$ and $C\subset\pr^2$ 
is a smooth conic, and the log del Pezzo pair $(\pr^1\times\pr^1, \delta C)$, where 
$\delta$ is a non-negative rational number with $\delta<1/2$ and 
$C\subset\pr^1\times\pr^1$ is the diagonal. 

\begin{thm}[{cf.\ \cite[Example 3.12]{LS}}]\label{mainthm}
\begin{enumerate}
\renewcommand{\theenumi}{\arabic{enumi}}
\renewcommand{\labelenumi}{(\theenumi)}
\item\label{mainthm1} 
Assume that $C\subset\pr^2$ is a smooth conic and let $\delta\in[0,1)\cap\Q$. 
Then the log del Pezzo pair $(\pr^2, \delta C)$ is K-polystable $($resp., K-semistable$)$ 
if and only if $\delta<3/4$ $($resp., $\delta\leq 3/4$$)$. 
\item\label{mainthm2} 
Assume that $C\subset\pr^1\times\pr^1$ is the diagonal and let $\delta\in(0,1)\cap\Q$. 
Then the log del Pezzo pair $(\pr^1\times\pr^1, \delta C)$ is 
K-polystable $($resp., K-semistable$)$ 
if and only if $\delta<1/2$ $($resp., $\delta\leq 1/2$$)$. 
\end{enumerate}
\end{thm}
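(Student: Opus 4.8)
The plan is to test K-stability valuatively and to exploit the large symmetry of both pairs. Write $(X,\Delta)$ for either pair. In case \eqref{mainthm1} we have $-K_X-\delta C=(3-2\delta)H$ with $H=\sO_{\pr^2}(1)$, and in case \eqref{mainthm2} we have $-K_X-\delta C=(2-\delta)(f_1+f_2)$ with $f_1,f_2$ the two rulings; both are ample in the stated range, so $(X,\Delta)$ is indeed a log del Pezzo pair. The group $G:=\Aut(X,\Delta)^0$ is $\PGL_2$ in each case — acting on $\pr^2$ through $\mathrm{PO}_3$ fixing the conic, respectively acting diagonally on $\pr^1\times\pr^1$ fixing the diagonal — hence reductive and of rank one. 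Since $G$ is positive-dimensional, $(X,\Delta)$ can never be K-stable, so what must be shown is exactly K-polystability strictly below the threshold and strict K-semistability at it. I would work throughout with the invariants $A_{X,\Delta}(\cdot)$, $S(\cdot)$, and $\beta:=A_{X,\Delta}-S$.

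The crucial geometric input is that in both cases $G$ acts on $X$ with exactly two orbits: the boundary curve $C$ (closed, and $\cong\pr^1$ with $G$ acting transitively) and its dense complement. Hence there are no $G$-invariant points on $X$, and the only $G$-invariant divisorial valuations with one-dimensional center are the multiples of $\ord_C$; by Zhuang's equivariance it therefore suffices to test K-semistability on $\ord_C$ alone. Concretely, the Abban–Zhuang adjunction along $C$, combined with the fact that $\PGL_2$ admits no invariant valuation over $C\cong\pr^1$, yields $\delta_G(X,\Delta)=A_{X,\Delta}(\ord_C)/S(\ord_C)$. Computing the volumes of $-K_X-\delta C-tC$ gives $A_{X,\Delta}(\ord_C)=1-\delta$ and $S(\ord_C)=(3-2\delta)/6$ in case \eqref{mainthm1}, so that
\[
\beta(\ord_C)=(1-\delta)-\frac{3-2\delta}{6}=\frac{3-4\delta}{6},
\]
while $S(\ord_C)=(2-\delta)/3$ and $\beta(\ord_C)=(1-2\delta)/3$ in case \eqref{mainthm2}. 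Thus $\beta(\ord_C)\geq 0$ precisely when $\delta\leq 3/4$ (resp.\ $\delta\leq 1/2$), which gives both K-semistability up to the threshold and its failure beyond it, with $\ord_C$ as the destabilizing valuation.

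For K-polystability I would treat the two regimes separately. When $\delta$ is strictly below the threshold the computation gives $\delta_G(X,\Delta)>1$; since $G$ has rank one and $\ord_C$ is not of product type, this upgrades to reduced uniform K-stability, which for log Fano pairs is equivalent to K-polystability. At the threshold one has $\beta(\ord_C)=0$, and I would exhibit the induced degeneration explicitly: as $\ord_C$ is dreamy on the Mori dream surface $X$, the deformation to the normal cone of $C$ produces a nontrivial special test configuration with $\DF=0$ whose central fibre is a projective cone over $C$, a normal projective surface not isomorphic to $X$. Since its general fibre is $X$ but its central fibre is not, this test configuration is not a product, so $(X,\Delta)$ is strictly K-semistable, which proves the equivalence at the threshold.

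The main obstacle is the polystability dichotomy rather than the numerics. Establishing that $\ord_C$ is genuinely the \emph{only} $G$-invariant destabilizer (the Abban–Zhuang reduction, resting on the transitivity of $\PGL_2$ on $C$ and the absence of $G$-invariant points) and converting $\delta_G>1$ into K-polystability through reduced uniform K-stability both require the equivariant and reduced stability machinery to be deployed carefully over an arbitrary algebraically closed field of characteristic zero. Verifying that the threshold test configuration is special, has vanishing Donaldson–Futaki invariant, and is not induced by a one-parameter subgroup of $\PGL_2$ is the remaining delicate point.
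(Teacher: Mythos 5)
Your reduction and your numerics are sound: on a smooth surface the only divisorial valuation whose center is the curve $C$ is (a multiple of) $\ord_C$, the two-orbit structure of the stabilizer $G\cong\PGL(2)$ of $C$ rules out zero-dimensional centers, and your values of $A_{(X,\Delta)}(\ord_C)$ and $S(\ord_C)$ reproduce exactly the paper's computation of $\hat{\beta}_{(X,\Delta)}(C)$ in Step 1 of the proofs of Theorems \ref{F_thm} and \ref{FF_thm}. The genuine gap is the polystability step strictly below the threshold. You pass from $\delta_G(X,\Delta)>1$ to ``reduced uniform K-stability'' on the grounds that $G$ has rank one and $\ord_C$ is not of product type, but reduced uniform K-stability quantifies over a far larger class of valuations than the $G$-invariant ones: it requires positivity, after minimizing over twists by cocharacters of a maximal torus $T\subset G$, for \emph{every} $T$-invariant valuation. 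The two-orbit argument controls only valuations invariant under the full $G=\PGL(2)$ and says nothing about the merely $T$-invariant ones --- for instance monomial valuations at a $T$-fixed point in $T$-eigencoordinates, or the valuations $\mathrm{wt}_\lambda$ attached to one-parameter subgroups $\lambda\subset G$, which are $T$-invariant but not $G$-invariant (this is precisely why $\delta_G>1$ can coexist with $\delta=1$). So the ``upgrade'' is a non sequitur as written. The correct bridge is Zhuang's equivariant theorem: for reductive $G$, K-polystability is equivalent to $G$-equivariant K-polystability; any nontrivial $G$-equivariant special test configuration has $G$-invariant central fiber, hence induces a nonzero $G$-invariant divisorial valuation over $X$, hence a multiple of $\ord_C$; since $\hat{\beta}_{(X,\Delta)}(C)>0$ strictly below the threshold, no such test configuration has vanishing Donaldson--Futaki invariant, and K-polystability follows. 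With this substitution (the same theorem also gives the K-semistability direction), your argument closes.

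Two further points. First, for $\delta=0$ in case \eqref{mainthm1} your setup is literally false: $\Aut(\pr^2)^0=\PGL(3)$ acts transitively, not $\PGL(2)$ with two orbits; there are then no invariant divisorial valuations at all, so the conclusion still follows (even more easily), but the statement needs this case split. Second, even once repaired, your route diverges sharply from the paper's: you import Zhuang's equivariant K-stability theorem and (implicitly) the Liu--Xu--Zhuang equivalence of K-polystability with reduced uniform K-stability, machinery much heavier than, and postdating, anything used here, whereas the paper's stated purpose is a self-contained elementary proof --- reduction to plt-type dreamy divisors (Proposition \ref{dream_prop} and Theorem \ref{pltK_thm}), the closed formula of Lemma \ref{beta_lem}, an explicit case analysis of monoidal transforms, and, for $\pr^1\times\pr^1$, only Theorem \ref{PPKss_thm} together with the log-concavity bound of Proposition \ref{convex_prop}. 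At the threshold the two arguments converge: your non-product test configuration with central fiber the cone $\pr(1,1,4)$ (resp.\ $\pr(1,1,2)$) is the same computation by which the paper shows $C$ is not product-type (Lemma \ref{prod1_lem}, resp.\ Lemma \ref{PP_non_prod_lem}), though in your framework you still owe the verification that this test configuration is special (klt central fiber with its boundary), a check the paper's definition of K-polystability via dreamy divisors lets it bypass entirely.
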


If $\Bbbk=\C$, then the above result is known by \cite[Example 3.12]{LS} and 
\cite[Theorem 4.8]{B}. 
We emphasize that, our proof is based on the work \cite{pltK}, 
purely algebraic, direct and easy. Moreover, in Theorem \ref{mainthm} \eqref{mainthm1}, 
we give a very easy and purely algebraic proof for K-polystability of $\pr^2$. 
For the proof of Theorem \ref{mainthm} \eqref{mainthm2}, we use the fact 
$\pr^1\times\pr^1$ is K-semistable. We can prove this fact purely algebraically 
(see \cite{kempf, li, blum, BJ}). 

As an immediate consequence of Theorem \ref{mainthm}, we get an algebraic proof 
for the classification of K-polystable log del Pezzo pairs of Maeda type. 
A pair $(X, \Delta)$ is said to be a \emph{log del Pezzo pair of Maeda type} if 
$X$ is a smooth projective surface and $\Delta$ is a nonzero effective $\Q$-divisor 
on $X$ such that $D:=\Supp\Delta$ is simple normal crossing and both 
$-(K_X+\Delta)$ and $-(K_X+D)$ are ample.

\begin{corollary}\label{maincor}
Let $(X, \Delta)$ be a log del Pezzo pair of Maeda type. 
Then $(X, \Delta)$ is K-polystable $($resp., K-semistable$)$ if and only if 
\begin{itemize}
\item
$(X, \Delta)$ is isomorphic to $(\pr^2, \delta C)$ with $C$ 
a smooth conic and $\delta<3/4$
$($resp., $\delta\leq 3/4$$)$, or
\item
$(X, \Delta)$ is isomorphic to $(\pr^1\times\pr^1, \delta C)$ with $C$ the diagonal 
and $\delta<1/2$ $($resp., $\delta\leq 1/2$$)$.
\end{itemize}
\end{corollary}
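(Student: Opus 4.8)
The plan is to derive Corollary \ref{maincor} as a direct consequence of Theorem \ref{mainthm} by first invoking the classification of log del Pezzo pairs of Maeda type. The key structural input is that the condition defining Maeda type is very restrictive: we require $X$ smooth projective surface, $\Delta$ a nonzero effective $\Q$-divisor with $D:=\Supp\Delta$ simple normal crossing, and \emph{both} $-(K_X+\Delta)$ and $-(K_X+D)$ ample. The second ampleness condition, applied to the reduced boundary $D$, means $(X,D)$ is itself a log smooth log del Pezzo pair, and such pairs admit a complete classification (this is Maeda's original work, which I would cite). The first step, therefore, is to list all possibilities for $(X, D)$ up to isomorphism.

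Once the finite classification of the underlying log smooth pairs $(X,D)$ is in hand, the second step is to observe that K-semistability imposes a strong constraint via the standard necessary condition. Specifically, for a log Fano pair to be K-semistable, one needs $\beta$-type nonnegativity against all prime divisors over $X$; applying this to the components of $D$ themselves (and to obvious toric or subadjunction-type test configurations) rules out all members of the classification except the two families appearing in the statement, namely $(\pr^2, \delta C)$ with $C$ a smooth conic and $(\pr^1\times\pr^1, \delta C)$ with $C$ the diagonal. Here the point is that in the two surviving cases the boundary is \emph{irreducible} and smooth, so that $\Delta=\delta C$ with $0<\delta<1$, whereas in all other cases either the boundary has a component forcing K-instability or the requisite ampleness fails; I would organize this as a short case analysis over the classification list.

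With the classification reduced to the two families, the third step is purely a matter of quoting Theorem \ref{mainthm}. For $(\pr^2,\delta C)$, Theorem \ref{mainthm} \eqref{mainthm1} gives K-polystability (resp.\ K-semistability) exactly when $\delta<3/4$ (resp.\ $\delta\leq 3/4$), and for $(\pr^1\times\pr^1,\delta C)$, Theorem \ref{mainthm} \eqref{mainthm2} gives the analogous statement with the threshold $1/2$. Combining these with the classification yields precisely the two bulleted conditions in Corollary \ref{maincor}, in both the K-polystable and the K-semistable versions.

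The main obstacle I anticipate is not the deduction from Theorem \ref{mainthm}, which is essentially immediate, but rather assembling and correctly invoking the classification of log smooth log del Pezzo surfaces $(X,D)$ and then verifying that every member other than the two distinguished ones fails K-semistability. One must be careful that the pairs in Maeda's list include several that are geometrically valid as log del Pezzo pairs of Maeda type but are ruled out only by the stability test; the cleanest route is to exhibit for each excluded case an explicit destabilizing divisor (for instance a boundary component $D_i$ with $\beta(D_i)<0$, or a vertical divisor on a Hirzebruch surface), rather than to re-derive instability abstractly. Provided the classification is stated accurately and the finitely many exclusions are checked, the corollary follows.
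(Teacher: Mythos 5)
Your proposal matches the paper's proof: the paper invokes Maeda's classification (Theorem \ref{maeda_thm}), rules out every case other than the two in the statement by explicit $\hat{\beta}$-computations against boundary components and sections/fibers of Hirzebruch surfaces, and then quotes Theorems \ref{F_thm} and \ref{FF_thm} for the two surviving families. The only wrinkle your outline glosses over is $(\F_1,\delta e_\infty)$, where no single divisor destabilizes for all $\delta$: the paper uses $e$ for $\delta<2-\sqrt{3}$ and the boundary $e_\infty$ for $\delta>2-\sqrt{3}$, with rationality of $\delta$ excluding the borderline value.
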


For the proof, we use Maeda's classification result \cite{maeda}. 
In general, Cheltsov and Rubinstein gave a question in \cite{CR} that which 
asymptotically log del Pezzo pairs (see \cite[Definition 1.1]{CR}) are K-polystable or not. 
In order to consider the question, it is important to establish techniques 
to test K-polystability of log del Pezzo pairs. The above log del Pezzo pairs 
$(X, \Delta)$ in Theorem \ref{mainthm} are no longer uniformly K-stable 
(see Theorems \ref{F_thm} and \ref{FF_thm}). 
Hence we cannot apply the techniques to evaluate the \emph{delta invariants} 
introduced in \cite{FO, BJ} in order to show K-polystability. 
The proof of Theorem \ref{mainthm} will be important 
to answer the question of Cheltsov and Rubinstein. 

This article is organized as follows. In \S \ref{K_section}, we give the definitions for 
K-polystability and K-semistability of log Fano pairs. The definitions are not of original 
form in \cite{tian, don}. Moreover, we see several numerical properties of the 
invariants $\hat{\beta}_{(X, \Delta)}(F)$ for log del Pezzo pairs $(X, \Delta)$ and 
of dreamy prime 
divisors $F$ over $(X, \Delta)$. In \S \ref{surface_section}, we see basic properties 
for exceptional prime divisors over smooth surfaces. Moreover, we see that there exists 
a non-dreamy prime divisor over $\pr^2$. In \S \ref{product_section}, we discuss 
product-type prime divisors over $(\pr^2, \delta C)$ and $(\pr^1\times\pr^1, \delta C)$. 
In \S \ref{P_section}, we prove Theorem \ref{mainthm} \eqref{mainthm1}; 
in \S \ref{PP_section}, we prove Theorem \ref{mainthm} \eqref{mainthm2}; 
in \S \ref{maeda_section}, we prove Corollary \ref{maincor}.

\begin{ack}
This work was supported by JSPS KAKENHI Grant Number 18K13388.
\end{ack}

For the minimal model program, we refer the readers to \cite{KoMo}. 
For a birational map $X\dashrightarrow X'$ between normal projective varieties and 
for a $\Q$-divisor $\Delta$ on $X$, the strict 
transform of $\Delta$ on $X'$ is denoted by $\Delta^{X'}$. 
Moreover, for a prime divisor $E$ on $X$, the coefficient of $\Delta$ at $E$ is 
denoted by $\coeff_E\Delta$. 

For the toric geometry, we refer the readers to \cite{CLS}. 
In this article, we consider only $2$-dimensional toric varieties. We always fix the lattice 
$N:=\Z^{\oplus 2}$ of rank $2$ and set $N_\R:=N\otimes_\Z\R$.

\section{K-stability of log Fano pairs}\label{K_section}

We recall K-polystability and K-semistability of log Fano pairs in \cite{pltK, ha}. 
The definition is equivalent to the original one \cite{tian, don} by the 
works \cite{li, vst, pltK, ha}. 

\begin{definition}\label{logfano_dfn}
Let $(X, \Delta)$ be a \emph{log pair}, that is, $X$ is a normal variety and $\Delta$ is 
an effective $\Q$-divisor on $X$ such that $K_X+\Delta$ is $\Q$-Cartier. Let $F$ be a 
prime divisor \emph{over} $X$, that is, there exists a resolution $\pi\colon\tilde{X}\to X$ 
such that $F$ is a prime divisor \emph{on} $\tilde{X}$. 
\begin{enumerate}
\renewcommand{\theenumi}{\arabic{enumi}}
\renewcommand{\labelenumi}{(\theenumi)}
\item\label{logfano_dfn1}
We set 
\[
A_{(X, \Delta)}(F):=1+\coeff_F\left(K_{\tilde{X}}-\pi^*(K_X+\Delta)\right). 
\]
The center (i.e., the image) of $F$ on $X$ is denoted by $c_X(F)$. 
We recall that the pair $(X, \Delta)$ is said to be \emph{klt} if $A_{(X, \Delta)}(F)>0$ 
for any prime divisor $F$ over $X$. 
\item\label{logfano_dfn2}
(\cite{ishii})
The $F$ is said to be \emph{primitive} over $X$ if there exists a projective birational 
morphism $\sigma\colon Y\to X$ (called the \emph{extraction of $F$}) with $Y$ normal 
such that $-F$ is a $\sigma$-ample $\Q$-Cartier divisor on $Y$. 
\item\label{logfano_dfn3}
(\cite{shokurov, prokhorov})
The $F$ is said to be \emph{plt-type over $(X, \Delta)$} if $F$ is primitive over $X$ and 
$(Y, \Delta_Y+F)$ is plt, where $\sigma\colon Y\to X$ is the extraction of $F$ and 
$\Delta_Y$ is the $\Q$-divisor on $Y$ given by the equation 
\[
K_Y+\Delta_Y+\left(1-A_{(X, \Delta)}(F)\right)F=\sigma^*(K_X+\Delta).
\]
\item\label{logfano_dfn4}
The pair $(X, \Delta)$ is said to be a \emph{log Fano pair} if $(X, \Delta)$ is a projective 
klt pair such that $-(K_X+\Delta)$ is an ample $\Q$-divisor on $X$. If moreover the 
dimension of $X$ is equal to $2$, then we call it a \emph{log del Pezzo pair}. 
\end{enumerate}
\end{definition}

\begin{definition}[{see \cite{li, vst, pltK, ha}}]\label{beta_dfn}
Let $(X, \Delta)$ be an $n$-dimensional log Fano pair and set $L:=-(K_X+\Delta)$. 
Take any prime divisor $F$ over $X$ and let us fix a resolution $\pi\colon\tilde{X}\to X$ 
such that $F$ is a prime divisor on $\tilde{X}$. 
\begin{enumerate}
\renewcommand{\theenumi}{\arabic{enumi}}
\renewcommand{\labelenumi}{(\theenumi)}
\item\label{beta_dfn1}
For any $x\in\R_{\geq 0}$ and for any $r\in\Z_{\geq 0}$ with $rL$ Cartier, let 
$H^0(X, rL-xF)$ be the subspace of $H^0(X, rL)$ given by 
\[
H^0(X, rL-xF):=H^0\left(\tilde{X}, \pi^*(rL)\left(\lfloor-xF\rfloor\right)\right)
\subset H^0\left(\tilde{X}, \pi^*(rL)\right)
\]
under the natural identity $H^0(X, rL)=H^0\left(\tilde{X}, \pi^*(rL)\right)$.
\item\label{beta_dfn2}
For any $x\in\R_{\geq 0}$, we set 
\[
\vol(L-xF):=\limsup_{r\to\infty}\frac{\dim_\Bbbk H^0(X, rL-rxF)}{r^n/n!}.
\]
We set 
\[
\tau(F):=\sup\{x\in\R_{\geq 0}\,\,|\,\,\vol(L-xF)>0\}.
\]
Moreover, if $F$ is primitive over $X$, then we set 
\[
\varepsilon(F):=\max\{x\in\R_{\geq 0}\,\,|\,\,\sigma^*L-xF\text{ is nef on }Y\}, 
\]
where $\sigma\colon Y\to X$ is the extraction of $F$. 
Obviously, we have $\varepsilon(F)\leq \tau(F)$. 
\item\label{beta_dfn3}
We set 
\[
\hat{\beta}_{(X, \Delta)}(F):=
1-\frac{\int_0^{\infty}\vol(L-xF)dx}{A_{(X, \Delta)}(F)\cdot(L^{\cdot n})}.
\]
\item\label{beta_dfn4}
The $F$ is said to be \emph{dreamy over $(X, \Delta)$} if the graded $\Bbbk$-algebra
\[
\bigoplus_{k,\, j\in\Z_{\geq 0}}H^0(X, krL-jF)
\]
is finitely generated over $\Bbbk$ for some $r\in\Z_{>0}$ with $rL$ Cartier. 
\item\label{beta_dfn5}
The $F$ is said to be \emph{product-type over $(X, \Delta)$} if there exists a 
$1$-parameter subgroup $\rho\colon \G_m\to \Aut(X, \Delta)$ of $\Aut(X, \Delta)$ 
such that the divisorial valuation $\ord_F\colon \Bbbk(X)^*\to\Z$ is equal to 
the composition 
\[
\Bbbk(X)^*\xrightarrow{\rho^*} \Bbbk(X)(t)^*\xrightarrow{\ord_{(t^{-1})}} \Z,
\]
where 
\[
\Aut(X, \Delta):=\{\theta\in\Aut(X)\,\,|\,\,\theta_*\Delta=\Delta\}\subset\Aut(X),
\]
and $\rho^*\colon \Bbbk(X)\to\Bbbk(X)(t)$ is given by the natural morphism 
$\rho\colon \G_m\times X\to X$. 
\end{enumerate}
\end{definition}

\begin{remark}\label{beta_rmk}
\begin{enumerate}
\renewcommand{\theenumi}{\arabic{enumi}}
\renewcommand{\labelenumi}{(\theenumi)}
\item\label{beta_rmk1}
The above definitions are not depend on the choice of the morphism 
$\pi\colon\tilde{X}\to X$. 
\item\label{beta_rmk2}
The function $\vol(L-xF)$ is continuous and non-increasing over $x\in[0, \infty)$ 
by \cite{L1, L2}. Moreover, by \cite[Theorem A]{BFJ}, $\vol(L-xF)$ is $\sC^1$ 
over $x\in[0, \tau(F))$.
\item\label{beta_rmk3}
By \cite[Proposition 2.4]{ishii}, the extraction of $F$ is unique if exists. 
\item\label{beta_rmk4}
If $F$ is product-type over $(X, \Delta)$, then $F$ is dreamy over $(X, \Delta)$ by 
\cite[Proposition 3.10]{ha}. 
If $F$ is dreamy over $(X, \Delta)$, then $F$ is primitive over $X$ 
by \cite[Remark 1.3 (1)]{pltK}. 
\end{enumerate}
\end{remark}

\begin{definition}\label{K_dfn}
Let $(X, \Delta)$ be a log Fano pair. 
\begin{enumerate}
\renewcommand{\theenumi}{\arabic{enumi}}
\renewcommand{\labelenumi}{(\theenumi)}
\item\label{K_dfn1}
The pair $(X, \Delta)$ is said to be \emph{K-semistable} (resp., \emph{K-stable}) 
if $\hat{\beta}_{(X, \Delta)}(F)\geq 0$ (resp., $>0$) 
for any dreamy prime divisor $F$ over $(X, \Delta)$. 
\item\label{K_dfn2}
The pair $(X, \Delta)$ is said to be \emph{K-polystable} if K-semistable, 
and a dreamy prime divisor $F$ over $(X, \Delta)$ satisfies that 
$\hat{\beta}_{(X, \Delta)}(F)=0$ 
only if $F$ is a product-type over $(X, \Delta)$. 
\item\label{K_dfn3}
The pair $(X, \Delta)$ is said to be \emph{uniformly K-stable} 
if there exists $\varepsilon>0$ such that $\hat{\beta}_{(X, \Delta)}(F)\geq \varepsilon$ 
for any dreamy prime divisor $F$ over $(X, \Delta)$. 
\end{enumerate}
\end{definition}

\begin{remark}\label{K_rmk}
\begin{enumerate}
\renewcommand{\theenumi}{\arabic{enumi}}
\renewcommand{\labelenumi}{(\theenumi)}
\item\label{K_rmk1}
By the works \cite{li, vst, pltK, ha}, the notions of K-semistability, K-polystability, 
K-stability and uniform K-stability are equivalent to the original one in \cite{tian, don, LX}. 
\item\label{K_rmk2}
It is known that K-semistability (resp., uniform K-stability) 
of $(X, \Delta)$ is equivalent to the 
condition $\hat{\beta}_{(X, \Delta)}(F)\geq 0$ (resp., 
$\hat{\beta}_{(X, \Delta)}(F)\geq \varepsilon$) 
for any prime divisor $F$ over $X$. See \cite{li, vst, pltK}. 
\end{enumerate}
\end{remark}

We recall the following: 

\begin{proposition}\label{tau_prop}
Let $(X, \Delta)$ be an $n$-dimensional log Fano pair and let $F$ be a prime divisor 
over $X$. If $\tau(F)\leq A_{(X, \Delta)}(F)$, then we have the inequality 
\[
\hat{\beta}_{(X, \Delta)}(F)\geq \frac{1}{n+1}.
\]
\end{proposition}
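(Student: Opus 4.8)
The plan is to rewrite $\hat{\beta}$ in terms of the ``expected vanishing order'' and reduce everything to a single unconditional estimate. Set $V:=(L^{\cdot n})$, $A:=A_{(X,\Delta)}(F)$ and $\tau:=\tau(F)$, and recall $L=-(K_X+\Delta)$. Since $\vol(L-xF)=0$ for $x\geq\tau$, we may write
\[
\hat{\beta}_{(X,\Delta)}(F)=1-\frac{1}{A\cdot V}\int_0^{\tau}\vol(L-xF)\,dx.
\]
Thus the proposition follows at once from the unconditional inequality
\[
\int_0^{\tau}\vol(L-xF)\,dx\leq\frac{n}{n+1}\,\tau\cdot V,
\]
because the hypothesis $\tau\leq A$ then gives $\hat{\beta}_{(X,\Delta)}(F)\geq 1-\tfrac{n}{n+1}\cdot\tfrac{\tau}{A}\geq 1-\tfrac{n}{n+1}=\tfrac{1}{n+1}$. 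So the whole content is the bound $S(F)\leq\frac{n}{n+1}\tau$, where $S(F):=\frac{1}{V}\int_0^{\tau}\vol(L-xF)\,dx$, and the use of $\tau\leq A$ is immediate.

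To prove this, I would pass to the Newton--Okounkov body $\Delta\subset\R^n_{\geq 0}$ of $L$ with respect to an admissible flag on a resolution of $X$ whose top step is $F$, so that the first Okounkov coordinate equals $\ord_F$; then $\vol(L-xF)=n!\cdot\vol_{\R^n}\bigl(\Delta\cap\{y_1\geq x\}\bigr)$, the $F$-extent of $\Delta$ is exactly $[0,\tau]$ (so $\tau=\max_{\Delta}y_1$), and Fubini identifies $S(F)$ with the first barycentric coordinate of $\Delta$. Two inputs pin down the shape of $\Delta$. First, by Remark \ref{beta_rmk2} the function $\vol(L-xF)^{1/n}$ is concave, reflecting the convexity of $\Delta$ via Brunn--Minkowski. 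Second, since $L$ is ample on $X$ its pullback is base-point free, so a general member of $|rL|$ does not contain $F$; hence $\min_{\Delta}y_1=0$, i.e.\ $\Delta$ genuinely meets the facet $\{y_1=0\}$. For a convex body whose extent in a fixed direction is $[0,\tau]$, the classical centroid bound shows that the corresponding barycentric coordinate is at most $\frac{n}{n+1}\tau$, the extremal case being the cone with apex on $\{y_1=0\}$ and base in $\{y_1=\tau\}$. This yields $S(F)\leq\frac{n}{n+1}\tau$ and completes the argument.

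The main obstacle is exactly this upper bound. Concavity of $\vol(L-xF)^{1/n}$ by itself only produces the reverse estimate: comparison with the chord from $(0,V^{1/n})$ to $(\tau,0)$ gives $\vol(L-xF)\geq V(1-x/\tau)^n$, hence the \emph{lower} bound $S(F)\geq\frac{1}{n+1}\tau$, which goes the wrong way for us. Getting the upper bound genuinely requires the positivity input that $\Delta$ reaches the origin; without it a ``plateau'' profile (with $\vol(L-xF)\equiv V$ on a long initial interval) would push $S(F)$ arbitrarily close to $\tau$ and defeat the claim. It is precisely the ampleness of $L$, forcing the minimal order of vanishing of $F$ along members of $|rL|$ to be zero, that excludes such profiles, and I expect the careful justification of $\min_{\Delta}y_1=0$ together with the centroid inequality to be where the real work lies.
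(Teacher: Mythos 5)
Your proposal is correct and is essentially the paper's own route: the paper proves the proposition by exactly your first-paragraph reduction, invoking the unconditional estimate $\int_0^{\infty}\vol(L-xF)\,dx\leq\tfrac{n}{n+1}\,\tau(F)\,(L^{\cdot n})$ from \cite[Proposition 2.1]{pltK} and then using the hypothesis $\tau(F)\leq A_{(X,\Delta)}(F)$. The Okounkov-body barycenter argument you sketch --- slice formula from \cite{LM}, the classical centroid bound for convex bodies, and the key positivity input that ampleness of $L$ forces the body to meet $\{y_1=0\}$ --- is essentially the known proof of that cited estimate, so you have inlined the reference rather than taken a genuinely different path.
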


\begin{proof}
Follows immediately from \cite[Proposition 2.1]{pltK}. 
\end{proof}

The following proposition is essential in \S \ref{PP_section}. 

\begin{proposition}\label{convex_prop}
Let $(X, \Delta)$ be an $n$-dimensional log Fano pair, let $L:=-(K_X+\Delta)$, and 
let $F$ be a prime divisor over $X$. Set $f(x):=\vol(L-xF)$ for $x\in \R_{\geq 0}$. 
Then, for any $0\leq x<y\leq \tau(F)$, we have the following inequality 
\[
f(y)\leq f(x)\left(\frac{y-x}{n}\frac{f'(x)}{f(x)}+1\right)^n.
\]
In particular, if $x>0$, then we have 
\[
\tau(F)\leq x+\frac{n f(x)}{-f'(x)}.
\]
\end{proposition}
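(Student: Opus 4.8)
The plan is to exploit the concavity properties of the volume function along the Okounkov-body/Zariski-type structure. The key analytic input is that $g(x) := f(x)^{1/n} = \vol(L-xF)^{1/n}$ is concave on the interval $[0,\tau(F))$. This concavity is standard for volumes of big divisors (it follows from the log-concavity of volumes, e.g. the Khovanskii--Teissier-type inequalities, and is compatible with the $\sC^1$-regularity recorded in Remark \ref{beta_rmk} \eqref{beta_rmk2}). Granting this, the first displayed inequality is just the statement that the graph of a concave function lies below its tangent line at the point $x$.

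First I would set $g(x):=f(x)^{1/n}$ and compute $g'(x)=\tfrac{1}{n}f(x)^{1/n-1}f'(x)$ for $x\in[0,\tau(F))$. By concavity of $g$ we have, for $0\le x<y\le\tau(F)$, the tangent-line bound
\[
g(y)\leq g(x)+(y-x)g'(x)=f(x)^{1/n}\left(1+\frac{y-x}{n}\frac{f'(x)}{f(x)}\right).
\]
Raising both sides to the $n$-th power (the right-hand side is nonnegative on the relevant range, since $g$ is nonincreasing and concave, so the factor in parentheses is between $0$ and $1$) gives exactly
\[
f(y)=g(y)^n\leq f(x)\left(\frac{y-x}{n}\frac{f'(x)}{f(x)}+1\right)^n,
\]
which is the first claim. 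The endpoint case $y=\tau(F)$ is handled by continuity of $f$ (again Remark \ref{beta_rmk} \eqref{beta_rmk2}).

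For the second, ``in particular'' statement, fix $x>0$ and take $y=\tau(F)$. Then $f(y)=f(\tau(F))=0$, so the inequality just proved forces the factor on the right to be $\le 0$ unless it vanishes; more precisely, since $f(x)>0$ we must have
\[
\frac{\tau(F)-x}{n}\frac{f'(x)}{f(x)}+1\leq 0,
\]
because a positive quantity raised to the $n$-th power cannot be $\le 0$ while being $>0$. Noting $f'(x)<0$ on $(0,\tau(F))$ (so $-f'(x)>0$) and rearranging yields
\[
\tau(F)-x\leq \frac{n f(x)}{-f'(x)},\qquad\text{i.e.}\qquad \tau(F)\leq x+\frac{n f(x)}{-f'(x)},
\]
as desired.

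The main obstacle is justifying the concavity of $g=f^{1/n}$ together with the sign $f'(x)<0$ cleanly on the half-open interval, and handling the boundary point $x=\tau(F)$ where $f$ vanishes but $f'$ need not be controlled. I would address concavity by invoking the differentiability result of \cite{BFJ} cited in Remark \ref{beta_rmk} \eqref{beta_rmk2} and the general log-concavity of the volume function; the strict sign $f'(x)<0$ for $x\in(0,\tau(F))$ follows since $f$ is continuous, nonincreasing, positive on $[0,\tau(F))$ and tends to $0$ at $\tau(F)$, so it cannot be locally constant there without contradicting concavity being strict enough to reach zero. If $f'$ failed to be defined at some interior point one would work with one-sided derivatives, which exist and satisfy the same tangent-line inequality for concave functions; this keeps the argument valid without extra regularity hypotheses.
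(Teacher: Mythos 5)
Your strategy for the first inequality (concavity of $g:=f^{1/n}$, i.e.\ log-concavity of volumes, plus the tangent-line bound, then raising to the $n$-th power) is the same as the paper's, which merely realizes the tangent-line bound as a limit of secant slopes using the $\sC^1$-regularity from \cite{BFJ}; that part of your proposal is fine. The genuine problem is your derivation of the ``in particular'' statement. You claim that $f(\tau(F))=0$ forces
\[
\frac{\tau(F)-x}{n}\frac{f'(x)}{f(x)}+1\leq 0,
\]
but this is false, and it contradicts your own (correct) earlier observation that this factor lies in $[0,1]$. What the vanishing of $f$ at $\tau(F)$ forces is the opposite sign: one must argue \emph{before} raising to the $n$-th power (essential, since for even $n$ the inequality $0\leq f(x)\left(\,\cdot\,\right)^n$ carries no sign information whatsoever), namely
\[
0=f(\tau(F))^{1/n}\leq f(x)^{1/n}\left(\frac{\tau(F)-x}{n}\frac{f'(x)}{f(x)}+1\right),
\]
and since $f(x)^{1/n}>0$ the factor is $\geq 0$; using $-f'(x)>0$ this rearranges to $(\tau(F)-x)(-f'(x))\leq nf(x)$, which is the desired bound. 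From your premise, correct algebra gives the \emph{reverse} inequality $\tau(F)-x\geq nf(x)/(-f'(x))$: you reach the stated conclusion only because a second sign error cancels the first. Concretely, for $X=\pr^2$, $\Delta=0$ and $F$ the exceptional divisor of the blow-up of a point, one has $f(x)=9-x^2$ and $\tau(F)=3$, so your claimed factor equals $3/(3+x)>0$ on all of $(0,3)$.

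A second gap is your justification of $f'(x)<0$ on $(0,\tau(F))$. The soft properties you invoke (continuity, monotonicity, positivity on $[0,\tau(F))$, vanishing at $\tau(F)$, concavity of $g$) do not imply it: the concave function $g(t)=\min(1,2-t)$ on $[0,2]$ satisfies all of them and yet has $g'\equiv 0$ on $(0,1)$; a nonnegative concave function can perfectly well be constant on an initial segment and still decrease to zero afterwards. The strict negativity of $f'$ on the big range is a genuine theorem about volume functions (strict decrease, equivalently positivity of the relevant restricted volumes), and it is precisely what the paper cites \cite[Corollary 4.27]{LM} for; you should cite that (or prove strict monotonicity of $\vol(L-xF)$ on $[0,\tau(F)]$, e.g.\ via Okounkov-body slicing) rather than rely on the soft argument. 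With these two points repaired, your proof coincides with the paper's.
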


\begin{proof}
We may assume that $x>0$. For any $c\in[0$, $x)$, we have 
\[
f(x)^{1/n}\geq \frac{y-x}{y-c}f(c)^{1/n}+\frac{x-c}{y-c}f(y)^{1/n}
\]
by the log-concavity of the volume functions (see, e.g., \cite{LM}). Thus we have 
\begin{eqnarray*}
\frac{1}{y-x}f(y)^{1/n}&\leq & \lim_{c\nearrow x}\frac{\frac{y-x}{y-c}f(c)^{1/n}
-f(x)^{1/n}}{c-x}=\frac{\partial}{\partial c}\bigg|_{c=x}\left(\frac{y-x}{y-c}f(c)^{1/n}\right)\\
&=&\frac{f(x)^{1/n}}{y-x}\left(\frac{f'(x)}{n f(x)}(y-x)+1\right).
\end{eqnarray*}
When $0<x< \tau(F)$, we know that $f'(x)<0$ (see \cite[Corollary 4.27]{LM} for 
example). 
Hence we get the assertion. 
\end{proof}

From now on, let us assume that $(X, \Delta)$ is a log del Pezzo pair with 
$\rho(X)=1$, where $\rho(X)$ is the Picard number of $X$. 
By \cite[Proposition 4.11]{KoMo}, $X$ is $\Q$-factorial. Take any dreamy exceptional 
prime divisor $F$ over $(X, \Delta)$. By Remark \ref{beta_rmk} \eqref{beta_rmk4}, $F$ is 
primitive over $X$. Let $\sigma\colon Y\to X$ be the extraction of $F$. Then 
$Y$ is $\Q$-factorial by \cite[Remark 2.2 (i)]{prokhorov}. Moreover, by 
\cite[Theorem 4.2]{KKL}, we have $\varepsilon(F)$, $\tau(F)\in\Q_{>0}$ and 
$\sigma^*L-\varepsilon(F)F\,\,(\not\sim_\Q 0)$ induces a non-trivial morphism 
$\mu\colon Y\to Z$ with connected fibers and with $Z$ normal and $\rho(Z)=1$. 
If $\varepsilon(F)<\tau(F)$, then $\mu$ is birational; if $\varepsilon(F)=\tau(F)$, 
then $Z\simeq\pr^1$. 

\begin{definition}\label{diag_dfn}
The above diagram 
\[\xymatrix{
& Y  \ar[dl]_\sigma \ar[dr]^\mu & \\
X & & Z
}\]
is called \emph{the standard diagram with respects to $F$}. 
\end{definition}

We frequently use the following lemma: 

\begin{lemma}\label{hinpan_lem}
Let $G\subset Y$ be an irreducible curve. 
\begin{enumerate}
\renewcommand{\theenumi}{\arabic{enumi}}
\renewcommand{\labelenumi}{(\theenumi)}
\item\label{hinpan_lem1}
If $\mu_*G=0$, then we have 
$\left(\left(\sigma^*L-\varepsilon(F)F\right)\cdot G\right)=0$.
\item\label{hinpan_lem2}
If $\mu_*G\neq 0$, then we have 
$\left(\left(\sigma^*L-\tau(F)F\right)\cdot G\right)\geq 0$.
\end{enumerate}
\end{lemma}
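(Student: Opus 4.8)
The plan is to analyze the geometry of the morphism $\mu\colon Y\to Z$ and the nef divisor $\sigma^*L-\varepsilon(F)F$ directly, using the structure of the standard diagram. First I would recall that $\sigma^*L-\varepsilon(F)F$ is nef on $Y$ and, by the setup preceding Definition \ref{diag_dfn}, it induces the morphism $\mu$ with connected fibers onto the normal variety $Z$ with $\rho(Z)=1$. Since $Y$ is $\Q$-factorial and $\rho(Z)=1$, the relative Picard number $\rho(Y/Z)$ equals $\rho(Y)-1$, and the classes $\sigma^*L-\varepsilon(F)F$ and $\sigma^*L-\tau(F)F$ live on the two-dimensional cone spanned inside $\ND(Y)_\R$ by the $\sigma$-exceptional behavior and the $\mu$-pullback. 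The key point is that $\sigma^*L-\varepsilon(F)F$ is the pullback of an ample class from $Z$ under $\mu$, so it is numerically trivial exactly on curves contracted by $\mu$.

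For part \eqref{hinpan_lem1}, suppose $\mu_*G=0$, i.e.\ $G$ is contracted by $\mu$. Then I would argue that the class $\sigma^*L-\varepsilon(F)F$, being $\mu$-pullback of an ample (hence numerically trivial over $Z$) class, satisfies $\left(\left(\sigma^*L-\varepsilon(F)F\right)\cdot G\right)=0$ by the projection formula: writing $\sigma^*L-\varepsilon(F)F\equiv\mu^*A$ for an ample $\Q$-divisor $A$ on $Z$ (possible since $\mu$ has connected fibers, $\rho(Z)=1$, and the class is nef with $\varepsilon(F)$ maximal so that it lies on the face defining $\mu$), we get $\left(\mu^*A\cdot G\right)=\left(A\cdot\mu_*G\right)=0$. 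This is the easier half.

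For part \eqref{hinpan_lem2}, suppose $\mu_*G\neq 0$, so $G$ dominates a curve (or all of $Z$, if $Z$ is a surface) under $\mu$. Here I would use that $\sigma^*L-\tau(F)F$ is the limiting class on the pseudo-effective boundary: by definition $\tau(F)=\sup\{x\mid\vol(L-xF)>0\}$, so $\sigma^*L-\tau(F)F$ lies on the boundary of the effective cone and is a nonnegative limit of effective classes. The strategy is to express $\sigma^*L-\tau(F)F$ as a nonnegative combination of $\sigma^*L-\varepsilon(F)F$ (which is nef, so pairs nonnegatively with any curve) and a class supported on the $\mu$-contracted locus, and then to show that the latter pairs nonnegatively with curves $G$ satisfying $\mu_*G\neq 0$. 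When $\varepsilon(F)=\tau(F)$ the two classes coincide and the claim reduces to nefness; when $\varepsilon(F)<\tau(F)$, $\mu$ is birational, $Z$ is a surface, and I would use that the exceptional locus of $\mu$ is contracted while $G$ is not, so the negative contributions concentrate away from $G$.

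The hard part will be making the interpolation in part \eqref{hinpan_lem2} rigorous: controlling how $\sigma^*L-\tau(F)F$ decomposes relative to the $\mu$-exceptional divisors and confirming the sign of the intersection against a non-contracted curve $G$. The cleanest route is probably to observe that on the boundary ray, $\sigma^*L-\tau(F)F$ is $\mu^*$ of a nef (indeed ample, in the birational case, or the pullback of the ample generator when $Z\simeq\pr^1$) class up to an effective $\mu$-exceptional correction with support disjoint from $G$ in the relevant numerical sense, which forces the intersection with $G$ to be nonnegative by the projection formula combined with nefness over $Z$.
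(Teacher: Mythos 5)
Your proposal follows essentially the same route as the paper: part \eqref{hinpan_lem1} is exactly the paper's argument (the class $\sigma^*L-\varepsilon(F)F$ is a pullback from $Z$, so it is numerically trivial on $\mu$-contracted curves), and part \eqref{hinpan_lem2} splits, as in the paper, into the trivial nef case $\varepsilon(F)=\tau(F)$ and the birational case $\varepsilon(F)<\tau(F)$. However, your description of the birational case contains a concrete error: $\sigma^*L-\tau(F)F$ is \emph{not} of the form $\mu^*(\text{ample})+(\text{effective $\mu$-exceptional})$; if it were, it would be big, contradicting the very definition $\tau(F)=\sup\{x\mid\vol(L-xF)>0\}$. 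What actually happens --- and this is the content of the paper's one-line proof --- is that the pullback part degenerates completely: since $\rho(Y)=2$ and $\mu$ contracts a single irreducible curve $\Gamma$, pushing forward to $Z$ (where $\rho(Z)=1$) and using $\vol(L-\tau(F)F)=0$ forces $\mu_*\left(\sigma^*L-\tau(F)F\right)\equiv 0$, so that $\sigma^*L-\tau(F)F\sim_\Q c\,\Gamma$ for some $c>0$. The lemma then follows from $\left(\left(\sigma^*L-\tau(F)F\right)\cdot G\right)=c\left(\Gamma\cdot G\right)\geq 0$, valid because $G$ and $\Gamma$ are distinct irreducible curves on a surface. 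Your interpolation strategy does go through, but only in the degenerate form where the nef summand has coefficient zero; your conclusion survives the misstatement only because the final step of your argument uses nefness rather than ampleness of the pullback part, together with $G$ not being a component of the exceptional correction. In a written-up proof you should replace the ``ample up to exceptional correction'' claim by the identification $\sigma^*L-\tau(F)F\sim_\Q c\,\Gamma$, which is precisely what resolves the step you flagged as the hard part.
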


\begin{proof}
\eqref{hinpan_lem1}
The assertion is obvious since 
the $\Q$-divisor $\sigma^*L-\varepsilon(F)F$ is the pullback of a $\Q$-divisor on $Z$. 

\eqref{hinpan_lem2}
If $\varepsilon(F)=\tau(F)$, then the assertion is trivial since $\sigma^*L-\varepsilon(F)F$ 
is nef. If $\varepsilon(F)<\tau(F)$, then $\sigma^*L-\tau(F)F$ is $\Q$-linearly 
equivalent to some positive multiple of the $\mu$-exceptional curve. Thus the assertion 
follows. 
\end{proof}

The following lemma is proved as in the case with the proof of \cite[Claim 4.3]{pltK}. 

\begin{lemma}\label{beta_lem}
Let us set $L:=-(K_X+\Delta)$ and 
\[
f_0:=-\frac{1}{(F^{\cdot 2})_Y}.
\]
Then we have 
$\varepsilon(F)\cdot\tau(F)=f_0\cdot(L^{\cdot 2})$ and 
\[
\hat{\beta}_{(X, \Delta)}(F)=1-\frac{\varepsilon(F)+\tau(F)}{3\cdot A_{(X, \Delta)}(F)}.
\]
\end{lemma}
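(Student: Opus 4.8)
The plan is to compute the volume function $f(x) = \vol(L - xF)$ explicitly on the surface $Y$, exploiting the standard diagram with respect to $F$. Since $X$ is a $\Q$-factorial log del Pezzo pair with $\rho(X)=1$ and $Y$ is obtained by extracting the single primitive divisor $F$, we have $\rho(Y) = 2$, so the N\'eron--Severi space $\ND(Y)_\R$ is two-dimensional and is spanned by $\sigma^*L$ and $F$. The key geometric input is that $\sigma^*L - \varepsilon(F)F$ is nef and sits on the boundary of the nef cone (it is the pullback of an ample class from $Z$ via $\mu$), while $\sigma^*L - \tau(F)F$ sits on the boundary of the pseudoeffective cone. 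On a smooth projective surface a nef divisor $D$ has $\vol D = (D^{\cdot 2})$, so for $x$ in the nef range $[0, \varepsilon(F)]$ I would simply expand
\[
f(x) = \left((\sigma^*L - xF)^{\cdot 2}\right) = (L^{\cdot 2}) - 2x\,(\sigma^*L\cdot F)_Y + x^2\,(F^{\cdot 2})_Y.
\]
Here $(\sigma^*L\cdot F)_Y = 0$ by the projection formula, since $\sigma_*F = 0$. Thus $f(x) = (L^{\cdot 2}) + x^2 (F^{\cdot 2})_Y = (L^{\cdot 2}) - x^2/f_0$ on $[0,\varepsilon(F)]$.

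\textbf{The Zariski-decomposition range.}
For $x \in [\varepsilon(F), \tau(F)]$ the class $\sigma^*L - xF$ is no longer nef, and here I would invoke the Zariski decomposition on the surface $Y$. Writing $\Gamma$ for the $\mu$-exceptional curve (when $\varepsilon(F) < \tau(F)$), the negative part of $\sigma^*L - xF$ is supported on $\Gamma$, and the positive (nef) part $P(x)$ is obtained by subtracting the appropriate multiple of $\Gamma$. Since $\vol$ equals the self-intersection of the positive part, $f(x) = (P(x)^{\cdot 2})$ is again a quadratic polynomial in $x$ on this interval, now pinned down by two conditions: $f(\varepsilon(F))$ and $f'(\varepsilon(F))$ match the values from the first range by the $\sC^1$-regularity of $\vol$ (Remark \ref{beta_rmk}\eqref{beta_rmk2}), and $f(\tau(F)) = 0$ because $\sigma^*L - \tau(F)F$ lies on the pseudoeffective boundary. (When $\varepsilon(F) = \tau(F)$ the second range is empty and $f(\tau(F)) = (L^{\cdot 2}) - \tau(F)^2/f_0 = 0$ directly gives the relation.)

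\textbf{Extracting the two identities.}
The relation $\varepsilon(F)\cdot\tau(F) = f_0\cdot(L^{\cdot 2})$ then falls out of solving the quadratic: in the nef range $f(x) = (L^{\cdot 2}) - x^2/f_0$, and matching this to the Zariski-range quadratic that vanishes at $\tau(F)$ forces the product of the appropriate roots to equal $f_0\cdot(L^{\cdot 2})$. Concretely, when $\varepsilon(F)=\tau(F)$ this is immediate from $f(\tau(F))=0$; in general one checks that the two quadratic pieces, glued $\sC^1$ at $\varepsilon(F)$ with $f(\tau(F))=0$, are consistent only when this product identity holds. For the formula for $\hat\beta$, I would compute $\int_0^\infty f(x)\,dx = \int_0^{\tau(F)} f(x)\,dx$ by integrating the two polynomial pieces and substituting the relation $\varepsilon(F)\tau(F) = f_0(L^{\cdot 2})$; the integral collapses to $\frac{1}{3}(\varepsilon(F)+\tau(F))(L^{\cdot 2})$, and plugging into the definition $\hat\beta_{(X,\Delta)}(F) = 1 - \frac{\int_0^\infty f\,dx}{A_{(X,\Delta)}(F)\cdot(L^{\cdot 2})}$ gives the stated expression.

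\textbf{Main obstacle.}
The main obstacle is justifying that $f$ is genuinely a piecewise quadratic with the claimed gluing behaviour, i.e.\ controlling the Zariski decomposition across the whole range $[\varepsilon(F),\tau(F)]$ and verifying that no further breakpoints occur. Because $\rho(Y) = 2$ and the relevant cones are simplicial, the negative part stays supported on the single curve $\Gamma$ throughout the interval, so there is exactly one breakpoint at $x = \varepsilon(F)$; this is where the reference to \cite[Claim 4.3]{pltK} does the real work, and I would follow that argument to pin down the Zariski decomposition and the $\sC^1$-matching rather than re-deriving it from scratch.
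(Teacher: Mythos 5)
Your overall strategy (two quadratic pieces for $\vol(L-xF)$, glued $\sC^1$ at $x=\varepsilon(F)$, the second piece coming from a Zariski decomposition whose negative part is supported on the $\mu$-exceptional curve $\Gamma$) is essentially the paper's, but the step where you extract the identity $\varepsilon(F)\tau(F)=f_0\cdot(L^{\cdot 2})$ has a genuine gap. You claim that the two pieces, ``glued $\sC^1$ at $\varepsilon(F)$ with $f(\tau(F))=0$, are consistent only when this product identity holds.'' That is false: a quadratic has three coefficients, and the three conditions $q(\varepsilon)=(L^{\cdot 2})-\varepsilon^2/f_0$, $q'(\varepsilon)=-2\varepsilon/f_0$, $q(\tau)=0$ determine a unique quadratic for \emph{every} value of $\tau>\varepsilon$; they impose no relation at all among $\varepsilon$, $\tau$, $f_0$ and $(L^{\cdot 2})$. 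Consequently neither identity of the lemma follows from your gluing argument, and your integral computation (which implicitly uses the explicit form of the second piece) is likewise unjustified.

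The missing idea is the precise shape of the second piece: it is not merely a quadratic vanishing at $\tau(F)$, but one with a \emph{double root} there, i.e.\ $f(x)=c_0(\tau(F)-x)^2$. This is exactly what the paper establishes using $\rho(Z)=1$: the classes $\mu_*\sigma^*L$ and $\mu_*F$ are $\Q$-linearly proportional on $Z$, so $\mu_*(\sigma^*L-xF)$ is an affine-linear scalar multiple of a fixed class and its self-intersection is $c_0(\tau(F)-x)^2$. Your own setup actually contains this fact, had you pushed it one step further: since $(\Gamma^{\cdot 2})<0$, the orthogonal complement $\Gamma^{\perp}\subset\ND(Y)_\R$ is a line, all the positive parts $P(x)$ lie on it, hence $P(x)=h(x)P_0$ for a fixed class $P_0$ with $h$ affine-linear, and $f(x)=h(x)^2(P_0^{\cdot 2})$, which together with $f(\tau(F))=0$ forces the double root. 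With that form in hand, the $\sC^1$-matching at $\varepsilon(F)$ reads $c_0(\tau-\varepsilon)^2=(L^{\cdot 2})-\varepsilon^2/f_0$ and $c_0(\tau-\varepsilon)=\varepsilon/f_0$, from which the product identity and then $\int_0^\infty\vol(L-xF)\,dx=\tfrac13(\varepsilon+\tau)(L^{\cdot 2})$ follow. Deferring ``the real work'' to \cite[Claim 4.3]{pltK}, as you do in your final paragraph, is deferring precisely the step that your consistency argument was supposed to supply. (A minor further point: $Y$ is normal and $\Q$-factorial but need not be smooth, so you should invoke $\vol(D)=(D^{\cdot 2})$ for nef $\Q$-Cartier divisors on normal surfaces rather than on smooth ones; this is harmless.)
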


\begin{proof}
We recall the proof of \cite[Claim 4.3]{pltK}. Note that 
\[
\vol(L-xF)=(L^{\cdot 2})-\frac{x^2}{f_0}
\]
for any $x\in[0, \varepsilon(F)]$. If $\varepsilon(F)=\tau(F)$, then the assertion is trivial 
since we know that $\vol(L-\tau(F)F)=0$. We may assume that $\varepsilon(F)<\tau(F)$. 
Then, since $\mu_*\sigma^*L$ and $\mu_*F$ are $\Q$-linearly proportional on $Z$, 
there exists $c_0\in\Q_{>0}$ such that, for any $x\in[\varepsilon(F), \tau(F)]$, we have 
$\vol(L-xF)=c_0(\tau(F)-x)^2$. When we apply Remark \ref{beta_rmk} \eqref{beta_rmk2} 
with $x=\varepsilon(F)$, we have 
\begin{eqnarray*}
(L^{\cdot 2})-\frac{\varepsilon(F)^2}{f_0}&=&c_0\left(\tau(F)-\varepsilon(F)\right)^2, \\
-\frac{2\varepsilon(F)}{f_0}&=&-2c_0\left(\tau(F)-\varepsilon(F)\right).
\end{eqnarray*}
Thus the assertion follows. 
\end{proof}

\section{Basic properties of surfaces}\label{surface_section}

In this section, we see basic properties for exceptional prime divisors on surfaces 
in order to prove Theorem \ref{mainthm}. 

\subsection{Sequences of monoidal transforms}

\begin{definition}\label{monoidal_dfn}
Let $X$ be a smooth surface and let $F$ be an exceptional prime divisor over $X$. 
We construct the sequence 
\[
\pi\colon\tilde{X}=X_m\to\cdots\to X_1\to X_0=X
\]
of monoidal transform (called \emph{the sequence of monoidal transforms with respects 
to $F$}) given by: 
\begin{enumerate}
\renewcommand{\theenumi}{\arabic{enumi}}
\renewcommand{\labelenumi}{(\theenumi)}
\item\label{monoidal_dfn1}
$X_0:=X$. 
\item\label{monoidal_dfn1}
If $F$ is a prime divisor on $X_i$, then we set $m:=i$, $\tilde{X}:=X_m$ and 
we stop the construction. 
\item\label{monoidal_dfn1}
If $F$ is exceptional over $X_i$, then we set $p_{i+1}:=c_{X_i}(F)$, let 
$\pi_{i+1}\colon X_{i+1}\to X_i$ 
be the blowup along $p_{i+1}$ and let $E_{i+1}\subset X_{i+1}$ 
be the $\pi_{i+1}$-exceptional curve. 
\end{enumerate}
For any $1\leq i\leq m$, let $\tilde{E}_i\subset\tilde{X}$ be the strict transform of 
$E_i$ on $\tilde{X}$. Obviously, we have $p_{i+1}\in E_i$ for any $1\leq i\leq m-1$, and we have 
$\tilde{E}_m=F$. 
\end{definition}

\begin{definition}\label{star_dfn}
Under the notation in Definition \ref{monoidal_dfn}, we define the following notions: 
\begin{enumerate}
\renewcommand{\theenumi}{\arabic{enumi}}
\renewcommand{\labelenumi}{(\theenumi)}
\item\label{star_dfn1}
For any $2\leq i\leq m$, let us define $q(i)\in[0, i-2]\cap\Z$ as follows: 
\begin{itemize}
\item
If $p_i\not\in E_j^{X_{i-1}}$ for any $1\leq j\leq i-2$, then we set $q(i):=0$. 
\item
If $p_i\in E_j^{X_{i-1}}$ for some $1\leq j\leq i-2$, then we set $q(i):=j$. 
\end{itemize}
Since $E_1^{X_{i-1}},\dots,E_{i-2}^{X_{i-1}}$, $E_{i-1}$ are simple normal crossing and 
$p_i\in E_{i-1}$, the definition makes sense. 
\item\label{star_dfn2}
For any $0\leq i\leq m$, let us define the effective $\Z$-divisor $E_i^*$ on $\tilde{X}$ 
as follows: 
\begin{itemize}
\item
We set $E_0^*:=0$ and $E_1^*:=(\pi_2\circ\cdots\circ\pi_m)^*E_1$.
\item
For any $2\leq i\leq m$, we set 
\[
E_i^*:=E_{q(i)}^*+E_{i-1}^*+(\pi_{i+1}\circ\cdots\circ\pi_m)^*E_i.
\]
\end{itemize}
\item\label{star_dfn3}
We set 
$f:=\coeff_F E_m^*\in\Z_{>0}$.
\end{enumerate}
\end{definition}

\begin{lemma}\label{star_lem}
\begin{enumerate}
\renewcommand{\theenumi}{\arabic{enumi}}
\renewcommand{\labelenumi}{(\theenumi)}
\item\label{star_lem1}
For any $1\leq i$, $j\leq m$, we have $\left(E_i^*\cdot \tilde{E}_j\right)=-\delta_{ij}$.
\item\label{star_lem2}
For any effective $\Q$-divisor $\Delta$ on $X$, we have 
\[
K_{\tilde{X}}+\pi^{-1}_*\Delta-\pi^*(K_X+\Delta)\leq\frac{A_{(X, \Delta)}(F)-1}{f}E_m^*.
\]
\item\label{star_lem3}
For any $L\in\Pic X$, $j\in\Q_{\geq 0}$ and for any $k\in\Z_{>0}$ with 
$k/f$, $kj/f\in\Z$, the natural homomorphism 
\[
H^0\left(\tilde{X}, k\left(\pi^*L-\frac{j}{f}E_m^*\right)\right)
\to
H^0\left(\tilde{X}, k\left(\pi^*L-jF\right)\right)
\]
given by the effective divisor $kj((1/f)E_m^*-F)$ is an isomorphism. 
\item\label{star_lem4}
Assume that $F$ is primitive over $X$ and let $\sigma\colon Y\to X$ be the extraction 
of $F$. Then the natural morphism $\nu\colon\tilde{X}\to Y$ over $X$ is the 
minimal resolution of $Y$, and we have the equality
\[
\nu^*F=\frac{1}{f}E_m^*.
\]
\item\label{star_lem5}
Assume furthermore that $X$ is projective. Take an effective and nef $\Z$-divisor $P$ 
on $X$. Set 
\[
\pi^*P=:\tilde{P}+\sum_{i=1}^m n_i\tilde{E}_i,
\]
where $\tilde{P}:=\pi_*^{-1}P$. Take any $x\in[0, n_m]$ and set 
\[
M:=\pi^*P-\frac{x}{f}E_m^*=\nu^*\left(\sigma^*P-xF\right).
\]
If each irreducible component $P'$ of $\tilde{P}$ is nef, or more generally 
$(M\cdot P')\geq 0$, then 
$M$ is nef.
\end{enumerate}
\end{lemma}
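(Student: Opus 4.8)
The plan is to prove Lemma \ref{star_lem} \eqref{star_lem5} by reducing the nefness of $M$ to the single numerical condition $(M \cdot P') \geq 0$ on the components of $\tilde{P}$, using the structure of the intersection theory on $\tilde{X}$ that has already been set up in parts \eqref{star_lem1}--\eqref{star_lem4}. First I would recall that $\tilde{X}$ is a smooth projective surface obtained from $X$ by a sequence of blowups, so the N\'eron--Severi group decomposes: every curve class is a combination of $\pi^*(\text{divisors on } X)$ and the exceptional curves $\tilde{E}_1, \dots, \tilde{E}_m$. To check that $M$ is nef, it suffices to verify $(M \cdot C) \geq 0$ for every irreducible curve $C$ on $\tilde{X}$. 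The natural case division is between curves $C$ contracted by $\pi$ (i.e.\ the $\pi$-exceptional curves, which on the minimal model are generated by the $\tilde{E}_i$) and curves $C$ not contracted by $\pi$ (whose images on $X$ are genuine curves).

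For the $\pi$-exceptional curves, the key input is part \eqref{star_lem1}, which gives $(E_i^* \cdot \tilde{E}_j) = -\delta_{ij}$, together with the fact that $\pi^* P$ has zero intersection with each $\tilde{E}_j$ (pullbacks meet exceptional curves trivially). Writing $M = \pi^* P - (x/f) E_m^*$ and computing $(M \cdot \tilde{E}_j)$, the first term vanishes and the second contributes $(x/f)\delta_{jm} \geq 0$ since $x \in [0, n_m]$ forces $x \geq 0$. Hence $M$ meets every $\tilde{E}_j$ non-negatively, and this handles the entire exceptional locus.

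For a curve $C$ on $\tilde{X}$ with $\pi_* C \neq 0$, I would argue that its image $\pi_* C$ is an effective curve on $X$, and its class is a non-negative combination involving $\tilde{P}$ together with exceptional contributions. More precisely, using the identity $M = \nu^*(\sigma^* P - xF)$ from part \eqref{star_lem4} together with $\sigma^*P - xF$ being an honest divisor on $Y$, one reduces the computation of $(M \cdot C)$ to intersections happening on $\tilde{X}$; writing $C$ in terms of the strict transform of $\pi_* C$ plus a non-negative combination of the $\tilde{E}_i$, the exceptional part is already controlled by the previous paragraph, and the remaining contribution is bounded below by the intersection of $M$ with the strict transform of $\pi_* C$. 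The latter, in turn, is dominated by the hypothesis $(M \cdot P') \geq 0$ applied to the components $P'$ of $\tilde{P}$, since $M = \nu^* \nu_* M$ plus exceptional terms and $\pi^*P$ lies in the span of $\tilde{P}$ and the $\tilde{E}_i$ with non-negative coefficients.

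\emph{The hard part will be} the bookkeeping for curves $C$ not contracted by $\pi$: one must carefully express an arbitrary irreducible curve class in terms of $\tilde{P}$ (equivalently its components $P'$) and the exceptional curves, and verify that all the coefficients that enter carry the correct sign so that the non-negativity of $(M \cdot \tilde{E}_j)$ and of $(M \cdot P')$ combine to give $(M \cdot C) \geq 0$. I expect the cleanest route is to pass to $Y$ via $\nu$: since $\nu$ is the minimal resolution of $Y$ and $M = \nu^*(\sigma^* P - xF)$, nefness of $M$ on $\tilde{X}$ is equivalent to nefness of $\sigma^* P - xF$ on $Y$ (pullback under a birational morphism preserves nefness, as $(M \cdot C)$ for $\nu$-exceptional $C$ vanishes and otherwise equals the intersection on $Y$). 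On $Y$ the Picard number is small and the effective cone is generated by $F$ together with the strict transforms of the components of $P$, so testing nefness reduces exactly to the stated inequalities $(M \cdot P') \geq 0$ and $(M \cdot F) \geq 0$; the latter holds since $M = \nu^*(\sigma^*P - xF)$ and $\varepsilon(F)$ is defined precisely as the threshold for $\sigma^*P - xF$ to remain nef in the relevant range. Assembling these observations yields that $M$ is nef.
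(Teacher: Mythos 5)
Your handling of the $\pi$-exceptional curves is fine and agrees with the paper: by part \eqref{star_lem1}, $(M\cdot\tilde{E}_j)=(x/f)\,\delta_{jm}\geq 0$. The genuine gap is in the other half of the argument, the curves not contracted by $\pi$. Your final reduction rests on the claim that the pseudo-effective cone of $Y$ is generated by $F$ together with the strict transforms of the components of $P$, so that nefness need only be tested against those classes. This claim is unjustified and false in the lemma's generality: $X$ is an arbitrary smooth projective surface (it may have large Picard number) and $P$ is an arbitrary nef effective divisor, so $\overline{\NE}(Y)$ has, in general, many extremal rays bearing no relation to $P$; even when $\rho(Y)=2$, the second extremal ray is spanned by the $\mu$-exceptional curve of the standard diagram (e.g.\ $l^Y$ in Example \ref{P2_ex}), which need not be a component of $P^Y$. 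Your justification of $(M\cdot F)_Y\geq 0$ via ``$\varepsilon(F)$ is the nef threshold'' is also off target and circular: $\varepsilon(F)$ is defined with respect to $L=-(K_X+\Delta)$, not $P$, and in the paper this very lemma is the tool used to \emph{compute} $\varepsilon(F)$ (Steps 7--11 of Theorem \ref{FF_thm}); in any case $(\sigma^*P-xF\cdot F)_Y=x/f\geq 0$ is just your exceptional-curve computation again, via $(F^{\cdot 2})_Y=-1/f$. Similarly, your middle paragraph (writing $C$ via $\pi^*\pi_*C$ minus exceptional terms and ``dominating'' the result by $(M\cdot P')$) cannot be completed: the exceptional terms enter with the wrong sign, and an arbitrary irreducible curve has no positivity relation to the chosen divisor $\tilde{P}$.

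The idea you are missing, which is the heart of the paper's proof, is effectivity of $M$ with controlled support. Since $x\leq n_m$ and $\nu$ contracts exactly $\tilde{E}_1,\dots,\tilde{E}_{m-1}$, one has $\nu_*M=\nu_*\bigl(\tilde{P}+(n_m-x)\tilde{E}_m\bigr)$, and since $M$ is a $\nu$-pullback this gives $M=\nu^*\nu_*\bigl(\tilde{P}+(n_m-x)\tilde{E}_m\bigr)$: the pullback of an effective divisor, hence effective, with support contained in $\tilde{P}\cup\tilde{E}_1\cup\dots\cup\tilde{E}_m$. Consequently every irreducible curve $C$ not contained in that support satisfies $(M\cdot C)\geq 0$ for free, and the only curves left to check are the components $P'$ of $\tilde{P}$ (non-negative by hypothesis; note that effectivity is also exactly what makes the ``each $P'$ nef'' form of the hypothesis imply $(M\cdot P')\geq 0$, a step you skip) and the curves $\tilde{E}_i$, which you already handled. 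Without this effectivity statement there is no way to control the intersection of $M$ with an arbitrary curve, so as written the proposal does not prove the lemma.
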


\begin{proof}
\eqref{star_lem1}
Let us remark that $E_i^*=(\pi_{i+1}\circ\dots\circ\pi_m)^*(E_i^*)^{X_i}$ for any 
$1\leq i\leq m$. Thus it is obvious that $\left(E_1^*\cdot \tilde{E}_j\right)=-\delta_{1j}$. 
From now on, let us assume that $i\geq 2$. We may assume that 
$\left(E_{i'}^*\cdot \tilde{E}_j\right)=-\delta_{i'j}$ holds for any $1\leq i'<i$ and for any 
$1\leq j\leq m$. From the construction, we have the equality 
\[
\left(E_i^*\cdot \tilde{E}_j\right)=\left(\left(
E_{q(i)}^*+E_{i-1}^*+(\pi_{i+1}\circ\dots\circ\pi_m)^*E_i\right)\cdot \tilde{E}_j\right).
\]
If $j>i$, then we have $\left(E_i^*\cdot \tilde{E}_j\right)=0$ since $\tilde{E}_j$ is 
exceptional over $X_i$. 
If $j=i$, then we have 
\[
\left(E_i^*\cdot \tilde{E}_i\right)=\left(
(\pi_{i+1}\circ\dots\circ\pi_m)^*E_i\cdot \tilde{E}_i\right)=-1.
\]
If $j\in\{q(i)$, $i-1\}$, since 
$\left(\left(E_{q(i)}^*+E_{i-1}^*\right)\cdot \tilde{E}_j\right)=-1$ and 
\[
\left((\pi_{i+1}\circ\dots\circ\pi_m)^*E_i\cdot \tilde{E}_j\right)=1, 
\]
we have $\left(E_i^*\cdot \tilde{E}_j\right)=0$. 
If $j<i$ and $j\not\in\{q(i)$, $i-1\}$, then we have 
\[
\left(\left(E_{q(i)}^*+E_{i-1}^*\right)\cdot \tilde{E}_j\right)
=\left((\pi_{i+1}\circ\dots\circ\pi_m)^*E_i\cdot \tilde{E}_j\right)=0. 
\]
Thus we have $\left(E_i^*\cdot \tilde{E}_j\right)=0$. 

\eqref{star_lem2}
For any $1\leq i\leq m-1$, the self intersection number of $\tilde{E}_i$ is smaller than or 
equal to $-2$. In particular, we have $\left(K_{\tilde{X}}\cdot\tilde{E}_i\right)\geq 0$. 
Set
\[
\sum_{i=1}^{m-1}h_i\tilde{E}_i:=K_{\tilde{X}}+\pi^{-1}_*\Delta-\pi^*(K_X+\Delta)
-\frac{A_{(X, \Delta)}(F)-1}{f}E_m^*.
\]
By \eqref{star_lem1}, we have 
\[
\left(\sum_{i=1}^{m-1}h_i\tilde{E}_i\cdot\tilde{E}_j\right)
=\left(K_{\tilde{X}}+\pi^{-1}_*\Delta\cdot\tilde{E}_j\right)\geq
\left(K_{\tilde{X}}\cdot\tilde{E}_j\right)\geq 0
\]
for any $1\leq j\leq m-1$. Hence we have $h_i\leq 0$ for any $1\leq i\leq m-1$ 
by \cite[Lemma 3.41]{KoMo}.

\eqref{star_lem3}
Take any effective divisor $G$ on $X$ and set $g:=\coeff_F\pi^*G\in\Z_{\geq 0}$. 
Set 
\[
\sum_{i=1}^{m-1}g_i\tilde{E}_i:=\pi_*^{-1}G-\pi^*G+\frac{g}{f}E_m^*.
\]
By \eqref{star_lem1}, we have 
\[
\left(\sum_{i=1}^{m-1}g_i\tilde{E}_i\cdot\tilde{E}_j\right)
=\left(\pi_*^{-1}G\cdot\tilde{E}_j\right)\geq 0
\]
for any $1\leq j\leq m-1$. Again by \cite[Lemma 3.41]{KoMo}, we have $g_i\leq 0$ 
for any $1\leq i\leq m-1$. Thus we have $\pi^*G\geq (g/f)E_m^*$. The assertion 
follows from this fact. 

\eqref{star_lem4}
The set of $\nu$-exceptional curves is equal to $\{\tilde{E}_i\}_{1\leq i\leq m-1}$. 
Moreover, we have $\left(K_{\tilde{X}}\cdot\tilde{E}_i\right)\geq 0$ for any 
$1\leq i\leq m-1$. Thus $\nu$ is the minimal resolution of $Y$. Since 
$\left(E_m^*\cdot \tilde{E}_i\right)=0$ for any $1\leq i\leq m-1$, we have 
the equality $\nu^*F=(1/f)E_m^*$.

\eqref{star_lem5}
Since $\nu_*M=\nu_*\left(\tilde{P}+(n_m-x)\tilde{E}_m\right)$, the $\R$-divisor 
\[
M=\nu^*\nu_*\left(\tilde{P}+(n_m-x)\tilde{E}_m\right)
\]
is effective. 
Thus, if $P'$ is nef, then we get $(M\cdot P')\geq 0$. 
Obviously, we have $\left(M\cdot\tilde{E}_m\right)=x/f\geq 0$. From the 
assumption, for any irreducible curve $B$ on $Y$, we have 
$\left(M\cdot \nu_*^{-1}B\right)\geq 0$. Moreover, for any $\nu$-exceptional curve 
$G$ on $\tilde{X}$, we have $(M\cdot G)=0$. 
\end{proof}

\begin{definition}\label{plt_dfn}
Under the notations in Definitions \ref{monoidal_dfn} and \ref{star_dfn}, 
let us further assume that $F$ is plt-type over $X$. From the construction, $F$ on 
$\tilde{X}$ intersects $\tilde{E}_1\cup\cdots\cup\tilde{E}_{m-1}$ at most $2$ points. 
Moreover, by \cite[Theorem 4.15]{KoMo}, the dual graph of 
$\tilde{E}_1\cup\dots\cup\tilde{E}_m$ is a straight chain.
\begin{enumerate}
\renewcommand{\theenumi}{\arabic{enumi}}
\renewcommand{\labelenumi}{(\theenumi)}
\item\label{plt_dfn1}
Set 
$k:=\max\{2\leq i\leq m\,\,|\,\,q(i)=0\}$. 
From the structure of the dual graph of $\tilde{E}_1\cup\dots\cup\tilde{E}_m$, 
we have $q(i)=0$ for any $2\leq i\leq k$. 
\item\label{plt_dfn2}
For any $0\leq i\leq m$, let us define $a_i$, $b_i\in\Z_{\geq 0}$ as follows: 
\begin{itemize}
\item
$(a_0$, $b_0):=(1$, $0)$, $(a_1$, $b_1):=(1$, $1)$.
\item
$(a_i$, $b_i):=(a_{q(i)}$, $b_{q(i)})+(a_{i-1}$, $b_{i-1})$.
\end{itemize}
Moreover, let us set $a^F:=a_m$ and $b^F:=b_m$. Clearly, we have 
$a_i\geq b_i$ for any $0\leq i\leq m$. 
\end{enumerate}
\end{definition}

\begin{lemma}\label{plt_lem}
\begin{enumerate}
\renewcommand{\theenumi}{\arabic{enumi}}
\renewcommand{\labelenumi}{(\theenumi)}
\item\label{plt_lem1}
For any $1\leq i\leq m$, $a_i$ and $b_i$ are mutually prime. In particular, $a^F$ and 
$b^F$ are mutually prime. 
\item\label{plt_lem2}
For any $1\leq i\leq m$, we have $A_X(E_i)=a_i+b_i$. In particular, 
we have $A_X(F)=a^F+b^F$. 
\item\label{plt_lem3}
We have
$f=a^Fb^F$ and $k=\lceil(a^F/b^F)\rceil$, where $f$ be as in Definition \ref{monoidal_dfn}. 
\item\label{plt_lem4}
For any $1\leq i\leq k$, we have 
$\coeff_{\tilde{E}_i}E_m^*=\min\{ib^F, \, a^F\}$.
\end{enumerate}
\end{lemma}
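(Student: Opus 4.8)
The plan is to dispatch parts \eqref{plt_lem2} and \eqref{plt_lem1} by elementary recursions, and then to extract \eqref{plt_lem3} and \eqref{plt_lem4} from a single closed formula for the coefficients of the dual divisors $E_j^*$, which I would read off from a toric model of the configuration.

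First I would prove \eqref{plt_lem2} by induction on $i$ using the behaviour of log discrepancies under the monoidal transform $\pi_i\colon X_i\to X_{i-1}$. Writing $\rho_i:=\pi_1\circ\cdots\circ\pi_i$, the relations $K_{X_i}=\pi_i^*K_{X_{i-1}}+E_i$ and $\pi_i^*E_j^{X_{i-1}}=E_j^{X_i}+(\mult_{p_i}E_j^{X_{i-1}})E_i$ let me compare coefficients of $E_i$ in $K_{X_i}-\rho_i^*K_X$ and obtain $A_X(E_i)=2+\sum_{j:\,p_i\in E_j^{X_{i-1}}}(A_X(E_j)-1)$. Because the $E_j^{X_{i-1}}$ are simple normal crossing and $p_i\in E_{i-1}$, the only exceptional prime divisors through $p_i$ are $\tilde E_{i-1}$ and, when $q(i)\neq 0$, $\tilde E_{q(i)}$; hence $A_X(E_i)=A_X(E_{i-1})+A_X(E_{q(i)})$ with the convention $A_X(E_0):=1$. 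This is exactly the recursion defining $a_i+b_i$, with matching initial values $A_X(E_0)=1=a_0+b_0$ and $A_X(E_1)=2=a_1+b_1$, so \eqref{plt_lem2} follows.

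For the remaining parts I would set up the toric model. I assign to each $E_i$ the vector $v_i:=(a_i,b_i)\in N=\Z^{\oplus 2}$, together with $v_0=(1,0)$ and the auxiliary boundary ray $(0,1)$, and I regard the $i$-th blow up as the star subdivision inserting the mediant $v_i=v_{i-1}+v_{q(i)}$ of the two rays through $p_i$ (when $q(i)=0$ the second ray is the boundary $v_0$). The essential point is that the self-intersections and incidences of the $\tilde E_i$, hence the intersection matrix and each $E_j^*$, depend only on which divisors pass through each $p_i$, that is, only on the data $q(\bullet)$; so they may be computed in this toric model even though, for the steps with $q(i)=0$, the actual centre $p_i$ is only a general point of $E_{i-1}$. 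Since $\tilde X$ is smooth every cone of the subdivision is unimodular, so each $v_i$ is primitive and $\gcd(a_i,b_i)=1$, which is \eqref{plt_lem1}. Moreover $v_1,\dots,v_k$ are exactly the lattice points $(1,1),\dots,(k,1)$ (as $q(i)=0$ forces $v_i=v_{i-1}+(1,0)$ for $2\le i\le k$), while $q(k+1)=k-1$ and every later $v_i$ is a mediant of vectors already lying in the cone $\langle v_{k-1},v_k\rangle$; thus $v_m=(a^F,b^F)$ lies in that cone, giving $(k-1)b^F<a^F\le kb^F$ and hence $k=\lceil a^F/b^F\rceil$.

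The computational heart is the closed formula
\[
\coeff_{\tilde E_i}E_j^*=\min\{a_ib_j,\,a_jb_i\}\qquad(1\le i,j\le m),
\]
which I would prove by verifying that the divisor $\sum_i\min\{a_ib_j,a_jb_i\}\tilde E_i$ satisfies the defining relations $(E_j^*\cdot\tilde E_l)=-\delta_{jl}$ of Lemma \ref{star_lem} \eqref{star_lem1}; by nondegeneracy of the intersection form on the exceptional locus this identifies it with $E_j^*$. Concretely, on the fan of the extraction of $E_j$ the support function of $E_j$ is the minimum of the two linear functions vanishing on the boundary rays, and scaling by $f_j=a_jb_j$ produces exactly the $\min$ above (this is Lemma \ref{star_lem} \eqref{star_lem4} for each $E_j$). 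Specialising to $j=m$ gives $f=\coeff_FE_m^*=\min\{a^Fb^F,a^Fb^F\}=a^Fb^F$ (equivalently $(F^{\cdot2})_Y=-1/(a^Fb^F)$), and, using $(a_i,b_i)=(i,1)$ for $1\le i\le k$, gives $\coeff_{\tilde E_i}E_m^*=\min\{ib^F,a^F\}$; together with the value of $k$ above these yield \eqref{plt_lem3} and \eqref{plt_lem4}.

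I expect the main obstacle to be the careful justification of the toric model: proving that the star-subdivision fan really reproduces the intersection data of the $\tilde E_i$, and in particular controlling the indices $q(i)$ from the plt hypothesis that the dual graph is a straight chain. This is where one must check that $q(i)=0$ precisely for $2\le i\le k$, that $q(k+1)=k-1$, and that no later centre lies on $\tilde E_j$ for $j\le k-2$, so that all rays $v_i$ with $i\ge k$ stay in the cone $\langle v_{k-1},v_k\rangle$. Once this combinatorial bookkeeping is in place, parts \eqref{plt_lem1}--\eqref{plt_lem4} are short.
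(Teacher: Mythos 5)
Your proposal is correct, and at its core it performs the same computation as the paper: identify $(a_i,b_i)$ with the primitive generators of rays inserted by successive star subdivisions, and read off $k$, $f$ and the coefficients of $E_m^*$ torically (part \eqref{plt_lem2} is proved by the same log-discrepancy recursion in both). The genuine difference is how the toric picture is justified and how the coefficients are extracted. The paper makes the \emph{actual} geometry toric: since $F$ is plt-type, it invokes Prokhorov's \'etale-local structure theorem (\cite[Proposition 6.2.6]{prokhorov_MSJ}) to reduce to $X=\A^2$ with $\sigma\colon Y\to X$ toric, after which every monoidal transform in the tower is automatically toric because each center $c_{X_i}(F)$ is the torus-fixed point of the cone containing $(a,b)$ in its interior; in particular $(a^F,b^F)$ gets identified with the weights of the quasi-monomial valuation $\ord_F$. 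You never claim the geometry is toric: you transport the computation to a toric mock-up carrying the same incidence data $q(\bullet)$, which rests on your observation that the intersection matrix of $\tilde{E}_1,\dots,\tilde{E}_m$ --- and hence each $E_j^*$, by negative definiteness of that matrix together with the relations of Lemma \ref{star_lem} \eqref{star_lem1} --- depends only on $q(\bullet)$, plus the chain constraints ($q(i)=0$ exactly for $2\leq i\leq k$, already recorded in Definition \ref{plt_dfn}, and $q(i+1)\in\{q(i),i-1\}$ for $i\geq k$) which guarantee that each mock-up center can be placed at a torus-fixed point. This trades the citation of Prokhorov for a combinatorial-invariance argument; both are sound, and yours is arguably more self-contained. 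Your closed formula $\coeff_{\tilde{E}_i}E_j^*=\min\{a_ib_j,\,a_jb_i\}$ is also a tidier package: the paper obtains $f=a^Fb^F$ from $(F^{\cdot 2})_Y=-1/(ab)$ exactly as you do, but proves part \eqref{plt_lem4} separately, computing $\coeff_{\tilde{E}_1}E_m^*$ and $\coeff_{\tilde{E}_k}E_m^*$ from intersections with the two invariant axes and then solving the chain relations $\left(E_m^*\cdot\tilde{E}_i\right)=0$, whereas your min-formula yields \eqref{plt_lem3} and \eqref{plt_lem4} simultaneously. Two points to make explicit in a full write-up: the identification of $\sum_i\min\{a_ib_j,a_jb_i\}\tilde{E}_i$ with $E_j^*$ needs the negative definiteness of the exceptional intersection form (uniqueness of the solution), and in the inequality $(k-1)b^F<a^F\leq kb^F$ the boundary case $m=k$ (where $b^F=1$ and $a^F=k$) should be noted separately, since only for $m>k$ does $v_m$ lie in the interior of the cone spanned by $v_{k-1}$ and $v_k$.
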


\begin{proof}
All of the assertions are \'etale local. By \cite[Proposition 6.2.6]{prokhorov_MSJ}, 
we may assume that $X=\A^2$ and $\sigma\colon Y\to X$ is a toric morphism of toric 
varieties. Thus, there exist mutually prime $a$, $b\in\Z_{>0}$ with $a\geq b$ such that: 
\begin{itemize}
\item
$X$ corresponds to the fan $\Sigma_0$ in $N_\R$ (i.e., $X=X_{\Sigma_0}$) 
such that $\Sigma_0$ consists of the $2$-dimensional cone 
$\R_{\geq 0}(1, 0)+\R_{\geq 0}(0, 1)$ and all of its faces. 
\item
$Y$ corresponds to the fan $\Sigma'$ in $N_\R$ (i.e., $Y=X_{\Sigma'}$) 
such that $\Sigma'$ consists of the $2$-dimensional cones 
$\R_{\geq 0}(1, 0)+\R_{\geq 0}(a, b)$, $\R_{\geq 0}(a, b)+\R_{\geq 0}(0, 1)$, 
and all of those faces. 
\item
The morphism $\sigma\colon Y\to X$ corresponds to the natural morphism of fans. 
\end{itemize}
Let us consider the sequence of monoidal transforms with respects to $F$. 
Every step of the monoidal transform is a toric morphism. Let $\Sigma_i$ in $N_\R$ be 
the fan associates with $X_i$. Assume that $i<m$. Then $(a$, $b)\in N_\R$ belongs to 
the interior of some $2$-dimensional cone $\sigma_i\in\Sigma_i$. Moreover, $p_{i+1}$ 
is the torus-invariant point in $X_{i+1}$ corresponds to $\sigma_i$ and $\Sigma_{i+1}$ 
is obtained by the star subdivision of $\Sigma_i$ along $\sigma_i$ 
(see \cite[Definition 3.3.13]{CLS}). 

Let $(a^i$, $b^i)\in N$ be the primitive generator of the $1$-dimensional cone 
corresponds to $E_i\subset X_i$. Since $\sigma_i=\R_{\geq 0}(1, 0)+\R_{\geq 0}(i, 1)$ 
for any $0\leq i <a/b$, we have $(a^i$, $b^i)=(i$, $1)$ for any $1\leq i<a/b+1$.
Therefore, we have $q(i)=0$ for any $1\leq i\leq \lceil(a/b)\rceil$. Moreover, since 
\[
(a, b)\in\R_{\geq 0}(\lceil(a/b)\rceil, 1)+\R_{\geq 0}(1, 1),
\] 
we have $q(i)\neq 0$ 
for any $i>\lceil(a/b)\rceil$. Thus we have $k=\lceil(a/b)\rceil$. 
Moreover, from the construction, we have the equality 
\[
(a^i, b^i)=(a^{q(i)}, b^{q(i)})+(a^{i-1}, b^{i-1})
\]
for any $2\leq i\leq m$, where we set $(a^0$, $b^0):=(1$, $0)$. 
Hence we can inductively show that $(a_i$, $b_i)=(a^i$, $b^i)$ for any $1\leq i\leq m$. 
In particular, we have $(a^F$, $b^F)=(a$, $b)$.

\eqref{plt_lem1}
The assertion is trivial since $a^i$ and $b^i$ are mutually prime. 

\eqref{plt_lem2}
We know that 
\[
A_X(E_i)=\begin{cases}
A_X(E_{i-1})+1 & \text{if }q(i)=0, \\
A_X(E_{q(i)})+A_X(E_{i-1}) & \text{otherwise}.
\end{cases}
\]
Thus the assertion follows inductively. 

\eqref{plt_lem3}
We have already seen that $k=\lceil(a/b)\rceil=\lceil(a^F/b^F)\rceil$. 
Let $l_1$, $l_2\subset Y$ be the torus invariant curve on $Y$ corresponds to the 
$1$-dimensional cone $\R_{\geq 0}(1, 0)$, $\R_{\geq 0}(0, 1)$, respectively. 
Then we have $(F^{\cdot 2})_Y=-1/(ab)$ since $(F\cdot l_1)_Y=1/b$ and 
$(F\cdot l_2)_Y=1/a$ (see \cite[Theorem 15.1.1]{CLS}). Thus we have 
\[
-\frac{1}{a^Fb^F}=\left(\nu^*F\cdot\tilde{E}_m\right)=-\frac{1}{f}
\]
by Lemma \ref{star_lem}. 

\eqref{plt_lem4}
The dual graph of $\tilde{E}_1\cup\cdots\cup\tilde{E}_m$ on $\tilde{X}$ is of the form: 
\begin{center}
    \begin{picture}(270, 55)(0, 50)
    \put(0, 60){\circle{10}}
    \put(0, 75){\makebox(0, 0)[b]{$\tilde{E}_1$}}
    \put(5, 60){\line(1, 0){31}}
    \put(38, 60){\line(1, 0){2}}
    \put(42, 60){\line(1, 0){2}}
    \put(46, 60){\line(1, 0){2}}
    \put(50, 60){\line(1, 0){30}}
    \put(85, 60){\circle{10}}
    \put(85, 75){\makebox(0, 0)[b]{$\tilde{E}_{k-1}$}}
    \put(90, 60){\line(1, 0){20}}
    \put(115, 60){\oval(10, 10)[l]}
    \put(171, 54){\makebox(0, 0)[b]{the other components}}
    \put(227, 60){\oval(10, 10)[r]}
    \put(232, 60){\line(1, 0){18}}
    \put(255, 60){\circle{10}}
    \put(255, 75){\makebox(0, 0)[b]{$\tilde{E}_k$}}
    \end{picture}
\end{center}
Moreover, $l_1^{\tilde{X}}$ (resp., $l_2^{\tilde{X}}$) intersects $\tilde{E}_k$ (resp., 
$\tilde{E}_1$) transversally. Thus we get 
\begin{eqnarray*}
\frac{1}{b^F}&=&(F\cdot l_1)_Y=\frac{1}{f}\left(E_m^*\cdot l_1^{\tilde{X}}\right)
=\frac{1}{a^Fb^F}\coeff_{\tilde{E}_k}E_m^*, \\
\frac{1}{a^F}&=&(F\cdot l_2)_Y=\frac{1}{f}\left(E_m^*\cdot l_2^{\tilde{X}}\right)
=\frac{1}{a^Fb^F}\coeff_{\tilde{E}_1}E_m^*.
\end{eqnarray*}
Thus the assertion is true when $i=1$ or $k$. We note that 
\begin{eqnarray*}
0&=&\left(E_m^*\cdot \tilde{E}_1\right)=-2\coeff_{\tilde{E}_1}E_m^*+
\coeff_{\tilde{E}_2}E_m^*, \\
0&=&\left(E_m^*\cdot \tilde{E}_{i-1}\right)=\coeff_{\tilde{E}_{i-2}}E_m^*
-2\coeff_{\tilde{E}_{i-1}}E_m^*+\coeff_{\tilde{E}_i}E_m^* 
\end{eqnarray*}
for $3\leq i\leq k-1$. 
Thus the assertion follows inductively. 
\end{proof}

\subsection{Dreamy prime divisors over log del Pezzo pairs}\label{dreamy_section}

We see basic properties of primes divisors over log del Pezzo pairs. 

\begin{proposition}\label{dream_prop}
Let $(X, \Delta)$ be a log del Pezzo pair and let $F$ be a prime divisor over $X$ with 
$\hat{\beta}_{(X, \Delta)}(F)<1/3$. Then $F$ is dreamy over $(X, \Delta)$. 
\end{proposition}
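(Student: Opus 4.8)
The plan is to deduce the statement from Proposition \ref{tau_prop} together with finite generation for section rings on varieties of Fano type. Since $(X,\Delta)$ is a log del Pezzo pair we have $n=2$, so $\frac{1}{n+1}=\frac{1}{3}$. The contrapositive of Proposition \ref{tau_prop} then reads: if $\hat{\beta}_{(X,\Delta)}(F)<1/3$, then necessarily
\[
\tau(F)>A_{(X,\Delta)}(F).
\]
Thus the whole point is to show that this single numerical inequality forces the graded ring defining dreaminess to be finitely generated.

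First I would fix a resolution $\pi\colon\tilde{X}\to X$ on which $F$ is a prime divisor, set $L:=-(K_X+\Delta)$, and rewrite the section ring. By Remark \ref{beta_rmk} \eqref{beta_rmk1} and Definition \ref{beta_dfn} \eqref{beta_dfn1}, for $r\in\Z_{>0}$ with $rL$ Cartier the ring $\bigoplus_{k,j\in\Z_{\geq 0}}H^0(X,krL-jF)$ is identified with $\bigoplus_{k,j\in\Z_{\geq 0}}H^0(\tilde{X},k\pi^*(rL)-jF)$, so it suffices to prove finite generation of the latter. The key geometric observation is the identity
\[
\pi^*L-A_{(X,\Delta)}(F)F=-\left(K_{\tilde{X}}+B\right),
\]
where $B:=\Delta^{\tilde{X}}+F-\sum_{E\neq F}\left(A_{(X,\Delta)}(E)-1\right)E$, the sum running over the $\pi$-exceptional prime divisors other than $F$. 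Here $F$ appears in $B$ with coefficient $1$, while every other component has coefficient strictly less than $1$ because $(X,\Delta)$ is klt. The inequality $\tau(F)>A_{(X,\Delta)}(F)$ says precisely that $\vol(\pi^*L-A_{(X,\Delta)}(F)F)>0$, i.e.\ that $-(K_{\tilde{X}}+B)$ is big. This places us in a log Fano type situation on the rational surface $\tilde{X}$.

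Granting the log Fano type property, I would conclude by invoking that a variety of Fano type is a Mori dream space, so that the Cox ring, and hence every section ring built from finitely many divisors such as the one generated by $\pi^*(rL)$ and $-F$, is finitely generated; this yields dreaminess. The main obstacle is reconciling the divisor $B$ with the hypotheses of these finite generation theorems: the component $F$ occurs with coefficient exactly $1$, and the exceptional components $E$ with $A_{(X,\Delta)}(E)>1$ occur with negative coefficients, so that $B$ is only a sub-boundary and $-(K_{\tilde{X}}+B)$ is big rather than nef. I would handle the coefficient-$1$ component by perturbing to $B-\varepsilon F$ for small $\varepsilon\in\Q_{>0}$, which keeps $-(K_{\tilde{X}}+(B-\varepsilon F))=\pi^*L-(A_{(X,\Delta)}(F)-\varepsilon)F$ big and lowers the coefficient of $F$ below $1$; the negative coefficients, being supported on $\pi$-exceptional curves, can be absorbed by running a $(K_{\tilde{X}}+B)$-MMP or by passing to a suitable birational model, since the section ring is independent of the chosen resolution. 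Carrying out this reduction so that a genuine klt pair with big anti-log-canonical class — hence a Mori dream space — is produced, and checking that finite generation descends to the subring cut out by $\pi^*(rL)$ and $F$, is the technical heart of the argument; on surfaces it may alternatively be phrased through the rationality and local finiteness of the Zariski chamber decomposition along the segment $\{\pi^*L-xF\}_{0\le x\le\tau(F)}$, together with the semiampleness of the positive parts guaranteed by the Fano type structure.
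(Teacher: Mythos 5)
Your first step---applying the contrapositive of Proposition \ref{tau_prop} with $n=2$ to get $\tau(F)>A_{(X,\Delta)}(F)$, and reformulating this as bigness of $\pi^*L-A_{(X,\Delta)}(F)F=-(K_{\tilde{X}}+B)$---agrees with the paper and is correct. The gap lies in the passage from this bigness to finite generation. Your route is ``produce a klt pair with big anti-log-canonical class, hence Fano type, hence Mori dream space,'' but: (a) $(\tilde{X},B)$ is not a pair at all ($B$ has a coefficient-one component and negative coefficients), as you yourself note; (b) even for a genuine klt pair, bigness of $-(K+B)$ alone is not enough for the results you invoke---\cite[Corollary 1.3.2]{BCHM} and the ``Fano type implies Mori dream space'' statement require $-(K+B)$ to be nef (or semiample) \emph{and} big, and nefness is exactly what you lack; and (c) the repairs you propose (perturbing the coefficient of $F$, ``running a $(K_{\tilde{X}}+B)$-MMP or passing to a suitable birational model'') are precisely the unproved technical heart: on an arbitrary resolution the exceptional divisors $E$ with $A_{(X,\Delta)}(E)>1$ enter $B$ negatively, and no argument is given that an MMP or model change terminates in a klt pair with the required positivity. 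Your closing alternative via Zariski chambers also presupposes ``the Fano type structure,'' i.e., what is to be proved. So as written the proof is incomplete at its decisive step.

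The paper closes exactly this gap by a careful choice of model and a different final citation. It takes $\phi\colon X_0\to X$ to be the \emph{minimal} resolution, so that $\Delta_0$ defined by $K_{X_0}+\Delta_0=\phi^*(K_X+\Delta)$ is \emph{effective} (if $F$ already lives on $X_0$, one is done by \cite[Corollary 1.3.2]{BCHM}, since $-(K_{X_0}+\Delta_0)$ is nef and big), and then takes $\pi\colon\tilde{X}\to X_0$ to be the sequence of monoidal transforms with respect to $F$. On this particular model, Lemma \ref{star_lem} \eqref{star_lem3} transfers bigness of $\pi^*\phi^*L-A_{(X,\Delta)}(F)F$ to bigness of $\pi^*\phi^*L-\frac{A_{(X,\Delta)}(F)}{f}E_m^*$, and Lemma \ref{star_lem} \eqref{star_lem2} together with effectivity of $\Delta_0$ gives
\[
\pi^*\phi^*L-\frac{A_{(X,\Delta)}(F)}{f}E_m^*\leq-\left(K_{\tilde{X}}+\pi_*^{-1}\Delta_0\right)\leq-K_{\tilde{X}},
\]
so $-K_{\tilde{X}}$ itself is big. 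The conclusion then comes not from Fano type theory but from \cite[Theorem 1]{TVAV}: a smooth \emph{rational} surface with big anticanonical divisor is a Mori dream space, with no klt or nefness hypotheses whatsoever. If you want to salvage your argument, this is the statement to aim for: forget the pair $(\tilde{X},B)$ and instead prove that $-K_{\tilde{X}}$ is big on a well-chosen model; that is exactly what the minimal resolution plus the combinatorial Lemma \ref{star_lem} accomplish.
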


\begin{proof}
Set $L:=-(K_X+\Delta)$. 
By Proposition \ref{tau_prop}, we have $\tau(F)>A_{(X, \Delta)}(F)$. 
Let $\phi\colon X_0\to X$ be the minimal resolution of $X$ and set 
$K_{X_0}+\Delta_0:=\phi^*(K_X+\Delta)$. We know that $\Delta_0$ is effective. If $F$ 
is a prime divisor on $X_0$, since $-(K_{X_0}+\Delta_0)$ is nef and big, then $F$ is dreamy 
over $(X, \Delta)$ (see \cite[Corollary 1.3.2]{BCHM} for example). 
Assume that $F$ is exceptional over $X_0$. Let 
$\pi\colon\tilde{X}=X_m\to\dots\to X_1\to X_0$ be the sequence of monoidal transforms 
with respects to $F$. Since $\pi^*\phi^*L-A_{(X, \Delta)}(F)F$ is big, by Lemma 
\ref{star_lem} \eqref{star_lem3}, 
\[
\pi^*\phi^*L-\frac{A_{(X, \Delta)}(F)}{f}E_m^*
\] 
is also big. By Lemma \ref{star_lem} \eqref{star_lem2}, we have 
\[
\pi^*\phi^*L-\frac{A_{(X, \Delta)}(F)}{f}E_m^*\leq 
-\left(K_{\tilde{X}}+\pi^{-1}_*\Delta_0\right)\leq -K_{\tilde{X}}.
\]
This implies that $-K_{\tilde{X}}$ is big. Since $\tilde{X}$ is rational (see \cite{N} 
for example), the variety $\tilde{X}$ is a Mori dream space in the sense of \cite{HK} 
by \cite[Theorem 1]{TVAV}. Thus $F$ is dreamy over $(X, \Delta)$. 
\end{proof}

Let us recall the following: 

\begin{thm}\label{pltK_thm}
Let $(X, \Delta)$ be a log Fano pair and let $F$ be a primitive prime divisor over $X$. 
Let $\sigma\colon Y\to X$ be the extraction of $F$. 
Assume that $F$ is not plt-type over $(X, \Delta)$. Then there exists a plt-type 
prime divisor $G$ over $(X, \Delta)$ such that $c_X(G)\subset c_X(F)$ and 
$\hat{\beta}_{(X, \Delta)}(G)<\hat{\beta}_{(X, \Delta)}(F)$. 
\end{thm}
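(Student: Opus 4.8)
The plan is to show that the failure of $(Y, \Delta_Y+F)$ to be plt produces a Koll\'ar-component-type divisor whose destabilizing effect dominates that of $F$. Set $L:=-(K_X+\Delta)$ and $n:=\dim X$. First I would record the crepant comparison of log discrepancies. Since $K_Y+\Delta_Y+\left(1-A_{(X,\Delta)}(F)\right)F=\sigma^*(K_X+\Delta)$ and $F$ is $\Q$-Cartier on $Y$ (as $-F$ is $\sigma$-ample $\Q$-Cartier), for any prime divisor $G$ over $Y$ with $v:=\ord_G(F)>0$ one computes
\[
A_{(X,\Delta)}(G)=A_{(Y,\Delta_Y+F)}(G)+A_{(X,\Delta)}(F)\cdot v.
\]
In particular, if $A_{(Y,\Delta_Y+F)}(G)\leq 0$, then $A_{(X,\Delta)}(G)\leq A_{(X,\Delta)}(F)\cdot v$.

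Next, the existence of $G$. Away from $F$ the pair $(Y,\Delta_Y+F)$ agrees with a pair crepant-equivalent to the klt pair $(X,\Delta)$, so its non-plt locus is a nonempty subset of $F$. Over (a general point of) this locus I would extract a plt-type prime divisor $G$ over $(X,\Delta)$ --- a Koll\'ar component in the sense of Prokhorov's plt blowups \cite{prokhorov}, applied after passing to a $\Q$-factorial model and, when the pair fails to be log canonical, to a slightly perturbed coefficient --- satisfying $A_{(Y,\Delta_Y+F)}(G)\leq 0$ and $c_Y(G)\subseteq F$. Then $c_X(G)=\sigma(c_Y(G))\subseteq c_X(F)$, and the displayed inequality gives $A_{(X,\Delta)}(G)\leq A_{(X,\Delta)}(F)\cdot v$. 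One must still verify that this Koll\'ar component, constructed on $Y$, is genuinely plt-type over $(X,\Delta)$: the strict transform of $F$ re-enters the boundary with the klt coefficient $1-A_{(X,\Delta)}(F)<1$, so this is a routine but necessary check.

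For the volume side, the containment $c_X(G)\subseteq c_X(F)$ forces $\ord_G(\phi)\geq v\cdot\ord_F(\phi)$ for every $\phi\in H^0(X,rL)$, since pulling back the zero divisor of $\phi$ to $Y$ contributes $\ord_F(\phi)\cdot v$ along $G$ plus a nonnegative term from the strict transform. Hence $H^0(X,rL-yF)\subseteq H^0(X,rL-vy\,G)$ and $\vol(L-vy\,G)\geq\vol(L-yF)$ for all $y\geq 0$. Substituting $x=vy$ yields
\[
\int_0^\infty\vol(L-xG)\,dx\geq v\int_0^\infty\vol(L-yF)\,dy.
\]
Combining this with the log-discrepancy bound gives $\int_0^\infty\vol(L-xG)\,dx\,/\,A_{(X,\Delta)}(G)\geq\int_0^\infty\vol(L-yF)\,dy\,/\,A_{(X,\Delta)}(F)$, that is, $\hat{\beta}_{(X,\Delta)}(G)\leq\hat{\beta}_{(X,\Delta)}(F)$.

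Finally, strictness. If equality held throughout, the volume inequality would be an equality for all $y$ by continuity (Remark \ref{beta_rmk}\eqref{beta_rmk2}), which together with the inclusion of sections forces the two filtrations to coincide on the section ring of $L$, hence $\ord_G=v\cdot\ord_F$ as divisorial valuations on $X$; this would identify $G$ with $F$, contradicting that $G$ is plt-type while $F$ is not. Therefore $\hat{\beta}_{(X,\Delta)}(G)<\hat{\beta}_{(X,\Delta)}(F)$. I expect the main obstacle to be the existence step --- producing the plt-type divisor realizing $A_{(Y,\Delta_Y+F)}(G)\leq 0$ and confirming its plt-type status over $(X,\Delta)$ --- since the two inequalities are the clean engine of the argument and strictness is then formal.
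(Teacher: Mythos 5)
Your two inequalities are correct, and they are exactly the engine of the result you are re-deriving: the paper itself offers no argument beyond citing \cite[Theorem 3.1 and Corollary 3.2]{pltK}, so what you have reconstructed is the skeleton of that cited proof. The crepant comparison $A_{(X,\Delta)}(G)=A_{(Y,\Delta_Y+F)}(G)+A_{(X,\Delta)}(F)\cdot v$ with $v:=\ord_G(F)$ is right, the inclusion $H^0(X,rL-ryF)\subseteq H^0(X,rL-rvyG)$ and the resulting bound $\int_0^\infty\vol(L-xG)\,dx\geq v\int_0^\infty\vol(L-yF)\,dy$ are right, and together they give $\hat{\beta}_{(X,\Delta)}(G)\leq\hat{\beta}_{(X,\Delta)}(F)$ for any $G$ with $A_{(Y,\Delta_Y+F)}(G)\leq 0$ and $v>0$.

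However, there are two genuine gaps. First, the existence step is not a routine appeal to plt blowups: you must produce a divisor $G$ that is simultaneously a non-klt place of $(Y,\Delta_Y+F)$ \emph{and} plt-type over $(X,\Delta)$ --- over $(X,\Delta)$, not over $Y$. Your ``routine but necessary check'' is miscast: on the extraction $\sigma_G\colon Y_G\to X$ of $G$ over $X$, the divisor $F$ is contracted and does not appear at all, so there is no ``strict transform of $F$ re-entering the boundary''; the plt condition to be verified is for $\left(Y_G,\sigma_{G*}^{-1}\Delta+G\right)$, and transporting plt-ness of a blowup constructed over $Y$ to this model, while checking that the tie-breaking perturbation does not destroy the inequality $A_{(Y,\Delta_Y+F)}(G)\leq 0$, is precisely the MMP work (via \cite{BCHM}) that occupies \cite[\S 3]{pltK}. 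Second, and more seriously, your strictness argument is invalid: equality $\vol(L-vyG)=\vol(L-yF)$ for all $y$, together with the inclusions of section spaces, does \emph{not} force the filtrations to coincide --- volume is an asymptotic invariant, blind to subspaces of dimension $o(r^n)$ --- so you cannot conclude $\ord_G=v\cdot\ord_F$. Strictness must instead be secured in the construction of $G$: when $(Y,\Delta_Y+F)$ is not lc one can choose $G$ with $A_{(Y,\Delta_Y+F)}(G)<0$, which makes the discrepancy inequality, hence the $\hat{\beta}$-inequality, strict; but when $(Y,\Delta_Y+F)$ is lc and not plt, every admissible $G$ has $A_{(Y,\Delta_Y+F)}(G)=0$ and your chain only yields the non-strict $\hat{\beta}_{(X,\Delta)}(G)\leq\hat{\beta}_{(X,\Delta)}(F)$. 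Closing that case --- by a perturbation regaining a strict discrepancy inequality, or by actually proving the volume inequality is strict for some $y$ --- is missing, and it is needed: the strict inequality is what lets the paper conclude, in Theorems \ref{F_thm} and \ref{FF_thm}, that any divisor with $\hat{\beta}_{(X,\Delta)}=0$ is plt-type.
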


\begin{proof}
Follows directly from \cite[Theorem 3.1 and Corollary 3.2]{pltK}. 
\end{proof}

Finally, we give an example of non-dreamy prime divisor over $\pr^2$. 

\begin{example}\label{non-dreamy_ex}
Assume that $\Bbbk$ is uncountable. 
Set $X:=\pr^2$ and $L:=-K_X$. Fix any smooth cubic curve $B\subset X$. Take a very general point $p_1\in B$ with respects to the inflection points. Then we have 
$\sO_X(i)|_B\otimes\sO_B(-3ip_1)\not\simeq\sO_B$ for any $i\in\Z_{>0}$. 
Let us consider the sequence of monoidal transforms 
$\pi\colon\tilde{X}=X_9\to\cdots\to X_1\to X_0=X$
obtained by: 
\begin{itemize}
\item
$X_0:=X$, $\pi_1\colon X_1\to X_0$ is the blowup along $p_1$ and $E_1\subset X_1$ 
is the $\pi_1$-exceptional curve. 
\item
For $1\leq i\leq 8$, let $p_{i+1}\in X_i$ be the intersection of $E_i$ and $B^{X_i}$. 
$\pi_{i+1}\colon X_{i+1}\to X_{i}$ is the blowup along $p_{i+1}$ and $E_{i+1}\subset X_{i+1}$ 
is the $\pi_{i+1}$-exceptional curve. 
\end{itemize}
Let $\tilde{E}_i\subset\tilde{X}$ (resp., $\tilde{B}$) be the strict transform of $E_i$ 
(resp., $B$) on $\tilde{X}$ as in Definition \ref{monoidal_dfn}. 
Since $\tilde{E}_1,\dots,\tilde{E}_8$ are $(-2)$-curves and their dual graph is a straight 
chain, by \cite[Proposition 4.10]{KoMo}, 
the morphism $\pi\colon\tilde{X}\to X$ decomposes into 
\[
\tilde{X}\xrightarrow{\nu}Y\xrightarrow{\sigma}X
\]
such that the set of $\nu$-exceptional divisors is equal to the set 
$\{\tilde{E}_i\}_{1\leq i\leq 8}$. Set $F:=\nu_*E_9$. Obviously, the morphism $\sigma$ 
is the extraction of $F$ and $\pi$ is the sequence of monoidal transforms 
with respects to $F$. Moreover, by \cite[Theorem 4.15]{KoMo}, $F$ is plt-type over $X$. 
Since $E_9^*=\sum_{i=1}^9i\tilde{E}_i$, we have 
\[
\nu^*F=\sum_{i=1}^9\frac{i}{9}\tilde{E}_i
\]
by Lemma \ref{star_lem} \eqref{star_lem4}. Moreover, since 
$\tilde{B}=\pi^*B-\sum_{i=1}^9i\tilde{E}_i$, we have 
$\tilde{B}\sim_\Q\nu^*(\sigma^*L-9F)$.
Since $\tilde{B}$ is an irreducible curve and $\left(\tilde{B}^{\cdot 2}\right)=0$, the 
divisor $\sigma^*L-9F$ is nef and non-big. Thus we have $\varepsilon(F)=\tau(F)=9$. 

Assume that $F$ is dreamy over $X$. Then, as in Definition \ref{diag_dfn}, 
$\sigma^*L-9F$ is semiample. Thus, 
\[
\nu^*(\sigma^*L-9F)|_{\tilde{B}}=L|_B-9p_1\,\,(\equiv 0)
\]
is also semiample. This leads to a contradiction. Thus $F$ is non-dreamy over $X$. 
\end{example}

\section{Product-type prime divisors}\label{product_section}

\subsection{Over $\mathbb{P}^2$}\label{product_P_section}

In this section, let $C\subset \pr^2$ be a smooth conic, and let us take 
$\delta\in[0,1)\cap\Q$ and set $\Delta:=\delta C$. 
It is well-known that, any $1$-parameter subgroup of $\Aut(\pr^2)=\PGL(3)$ is, 
after a coordinate change of $\pr^2$, of the form 
\begin{eqnarray*}
\rho\colon\G_m&\to&\PGL(3)\\
t&\mapsto&\diag(1, t^{-a'}, t^{-b'})
\end{eqnarray*}
for some $(a'$, $b')\in\Z^{2}_{\geq 0}\setminus\{(0$, $0)\}$ with $a'\geq b'$. 
Let $g\in\Z_{>0}$ be the greatest common factor of $a'$ and $b'$, and let us set 
$a:=a'/g$, $b:=b'/g$. As in \cite[Example 3.6]{ha} (see also \cite{JM}), the divisorial 
valuation $v$ on $\Bbbk(\pr^2)$ associates to $\rho$ is the quasi-monomial valuation 
on 
\[
\A^2_{x_1, x_2}=\pr^2_{z_0:z_1:z_2}\setminus(z_0=0)
\]
for coordinates $(x_1$, $x_2)$ (where $x_i:=z_i/z_0$) with weights $(a'$, $b')$. 
If $b'=0$, then $v=a'\cdot\ord_l$, where $l\subset \pr^2$ is the line $(z_1=0)$. 
Assume that $b'\geq 1$. Let $\Sigma^{(a,b)}$ (resp., $\Sigma$) be the complete fan 
in $N_\R$ such that the set of $1$-dimensional cones is equal to 
\begin{eqnarray*}
\{\R_{\geq 0}(1,0), \,\,\R_{\geq 0}(a,b), \,\,\R_{\geq 0}(0,1), \,\,\R_{\geq 0}(-1,-1)\}\\
(\text{resp., }\,\{\R_{\geq 0}(1,0), \,\,\R_{\geq 0}(0,1), \,\,\R_{\geq 0}(-1,-1)\}).
\end{eqnarray*}
Set $Y^{(a,b)}:=X_{\Sigma^{(a,b)}}$ and let $\sigma\colon Y^{(a,b)}\to X_{\Sigma}=\pr^2$ 
be the natural toric morphism. 
Let $F^{(a,b)}\subset Y^{(a,b)}$ be the $\sigma$-exceptional divisor. Then we know that 
$v=g\cdot\ord_{F^{(a,b)}}$. 

Consequently, we have proved the following: 

\begin{lemma}\label{prod1_lem}
A prime divisor $F$ over $\pr^2$ is product-type over $\pr^2$ if and only if $F$ is a 
line on $\pr^2$ or, after a coordinate change of $\pr^2$, $F$ is equal to the above 
$F^{(a,b)}$ for some $a$, $b\in\Z_{>0}$ with $a$, $b$ mutually prime. 
\end{lemma}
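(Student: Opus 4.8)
The plan is to reduce the statement to the explicit computation of the divisorial valuation attached to a $1$-parameter subgroup of $\PGL(3)$, which is precisely what the discussion preceding the lemma has already carried out. By Definition \ref{beta_dfn} \eqref{beta_dfn5}, a prime divisor $F$ over $\pr^2$ is product-type if and only if $\ord_F$ equals (up to the normalization forced by being a divisorial valuation) the valuation $v$ associated to some $1$-parameter subgroup $\rho\colon\G_m\to\Aut(\pr^2,\delta C)$. Since $C$ is a smooth conic and $\delta\in[0,1)$, we first note that $\Aut(\pr^2,\delta C)$ has the same identity component as $\Aut(\pr^2)=\PGL(3)$ for the purpose of enumerating $1$-parameter subgroups: any $\rho$ preserving $C$ is in particular a $1$-parameter subgroup of $\PGL(3)$, and conversely every $1$-parameter subgroup can be arranged, after a coordinate change, to be diagonal, so the enumeration above applies. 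Thus the set of product-type divisors over $(\pr^2,\delta C)$ coincides with the set of divisors arising from diagonal $1$-parameter subgroups.

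Next I would run through the two cases in the preceding paragraph. If the subgroup has weight vector with $b'=0$, then $v=a'\cdot\ord_l$ for the line $l=(z_1=0)$, so as a divisorial valuation $\ord_F=\ord_l$; hence $F$ is a line. If $b'\geq 1$, writing $g=\gcd(a',b')$ and $a=a'/g$, $b=b'/g$ with $a,b$ mutually prime and $a\geq b$, the computation via the fan $\Sigma^{(a,b)}$ shows $v=g\cdot\ord_{F^{(a,b)}}$, so again as a normalized divisorial valuation $\ord_F=\ord_{F^{(a,b)}}$ and $F=F^{(a,b)}$. This gives the forward implication: every product-type $F$ is a line or some $F^{(a,b)}$.

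For the converse I would check that each of the listed divisors genuinely is product-type. A line $l$ is the divisor attached to the weight $(a',b')=(1,0)$ in suitable coordinates, and $F^{(a,b)}$ (for $a,b$ mutually prime) is attached to the weight $(a,b)$ with $g=1$; in both cases the associated $1$-parameter subgroup is diagonal and hence lies in $\PGL(3)=\Aut(\pr^2)$, and it preserves $\delta C$ after the relevant coordinate change since the coordinate change is absorbed into the ``after a coordinate change of $\pr^2$'' clause of the statement. The main point requiring care, and the only genuine obstacle, is the claim that the relevant $1$-parameter subgroups can be taken inside $\Aut(\pr^2,\delta C)$ rather than merely inside $\PGL(3)$: one must observe that the definition of product-type only requires the existence of \emph{some} coordinate system in which $F$ is a line or $F^{(a,b)}$, and in that coordinate system the diagonal torus action preserves the correspondingly transformed conic, so the membership $\rho\colon\G_m\to\Aut(\pr^2,\delta C)$ holds for the transformed pair. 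With the explicit identification $v=g\cdot\ord_{F^{(a,b)}}$ (resp.\ $v=a'\cdot\ord_l$) from the preceding paragraph in hand, everything else is the bookkeeping of matching normalized valuations, and the lemma follows.
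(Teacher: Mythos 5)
Your reduction to diagonal $1$-parameter subgroups and the valuation computation (quasi-monomial with weights $(a',b')$, giving $a'\cdot\ord_l$ when $b'=0$ and $g\cdot\ord_{F^{(a,b)}}$ when $b'\geq 1$, with the normalization $a'=1$, resp.\ $g=1$, forced by $\ord_F$ being a divisorial valuation) is exactly the argument the paper intends. But there is a genuine error in how you set the problem up. The lemma concerns being product-type \emph{over $\pr^2$}, i.e., with respect to $\Aut(\pr^2)=\PGL(3)$ in Definition \ref{beta_dfn} \eqref{beta_dfn5} (the boundary is zero there); it is not a statement about the pair $(\pr^2,\delta C)$. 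You instead quantify over $1$-parameter subgroups of $\Aut(\pr^2,\delta C)$ and then claim that $\Aut(\pr^2,\delta C)$ ``has the same identity component as $\PGL(3)$''. For $\delta>0$ this is false: $\Aut(\pr^2,\delta C)$ is the stabilizer of the smooth conic $C$, which is isomorphic to $\PGL(2)$ via the Veronese embedding and has dimension $3$, while $\PGL(3)$ has dimension $8$. Your ``conversely'' step fails for the same reason: diagonalizing a $1$-parameter subgroup of $\PGL(3)$ moves $C$ to some other conic, and a diagonal $1$-parameter subgroup with weights $(a,b)$ preserves \emph{some} smooth conic only when $(a,b)=(2,1)$ (the invariant conic being $z_2^2+z_0z_1=0$ in those coordinates). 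Consequently your assertion that the set of product-type divisors over $(\pr^2,\delta C)$ coincides with the set of divisors arising from diagonal $1$-parameter subgroups is wrong: for $\delta>0$ a line is not product-type over $(\pr^2,\delta C)$, and neither is $F^{(a,b)}$ unless $(a,b)=(2,1)$ in coordinates adapted to $C$.

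This distinction is not pedantic; it is load-bearing in the paper. Proposition \ref{prod2_prop} exists precisely because being product-type over the pair is strictly stronger than being product-type over $\pr^2$, and in Step 1 of the proof of Theorem \ref{F_thm} the lemma is combined with the inclusion $\Aut(X,\Delta)\subset\Aut(X)$ to conclude that $C$ is not product-type over $(X,\Delta)$; if the two notions coincided, that deduction and Proposition \ref{prod2_prop} would be vacuous. Your proof becomes correct, and then coincides with the paper's, if you simply delete the detour through $\Aut(\pr^2,\delta C)$: by definition, ``product-type over $\pr^2$'' means $\ord_F$ is the valuation attached to a $1$-parameter subgroup of $\PGL(3)$; every such subgroup is diagonal after a coordinate change, and the two-case computation you quote, together with the normalization argument, finishes both directions.
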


We consider product-type prime divisor over $(\pr^2, \Delta)$. 
Take any point $p_1\in C$. Let $\pi_1\colon X_1\to \pr^2$ be the blowup along $p_1$ and 
let $E_1\subset X_1$ be the $\pi_1$-exceptional curve. Let $p_2\in X_1$ be the 
intersection of $E_1$ and $C^{X_1}$. Let $\pi_2\colon X_2\to X_1$ be the blowup along 
$p_2$ and let $E_2\subset X_2$ be the $\pi_2$-exceptional curve. 

\begin{proposition}\label{prod2_prop}
The above $E_2$ is a product-type prime divisor over $(\pr^2, \Delta)$. 
\end{proposition}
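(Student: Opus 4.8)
The plan is to verify Definition \ref{beta_dfn} \eqref{beta_dfn5} directly, by exhibiting a one-parameter subgroup of $\Aut(\pr^2,\Delta)$ whose associated divisorial valuation is exactly $\ord_{E_2}$. First I would normalize coordinates. Since $\PGL(3)$ acts transitively on pairs consisting of a smooth conic and a point lying on it (the stabilizer $\Aut(\pr^2,C)\simeq\PGL(2)$ acting transitively on $C$), after a coordinate change of $\pr^2$ I may assume $C=(z_0z_1=z_2^2)$ and $p_1=[1:0:0]$. In the affine coordinates $x_1:=z_1/z_0$, $x_2:=z_2/z_0$ centered at $p_1$, the conic becomes $(x_1=x_2^2)$, and its tangent line at $p_1$ is $(x_1=0)$.

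Next I would take the candidate subgroup $\rho\colon\G_m\to\PGL(3)$, $\rho(t):=\diag(1,t^{-2},t^{-1})$. Because $z_0z_1-z_2^2$ is a $\rho$-eigenvector (it is scaled by $t^{-2}$), the conic $C$ is $\rho$-invariant, hence $\rho(\G_m)\subseteq\Aut(\pr^2,C)\subseteq\Aut(\pr^2,\Delta)$. By the description of product-type valuations recalled above (following \cite[Example 3.6]{ha}), the valuation associated to $\rho$ is the monomial valuation $v$ on $\A^2_{x_1,x_2}$ with weights $v(x_1)=2$ and $v(x_2)=1$.

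It then remains to prove the identity $\ord_{E_2}=v$, which I would establish by following the two blowups in explicit charts. Blowing up $p_1$ in the chart $x_2=u$, $x_1=uw$ yields $E_1=(u=0)$ and $C^{X_1}=(w=u)$, so that $p_2=E_1\cap C^{X_1}=(u=w=0)$; blowing up $p_2$ in the chart $u=s$, $w=s\tau$ yields $E_2=(s=0)$ with $x_2=s$ and $x_1=s^2\tau$. A monomial $x_1^ix_2^j$ therefore pulls back to $s^{2i+j}\tau^i$, and since two monomials with a common value of $2i+j$ carry distinct powers of $\tau$, no cancellation can occur in the lowest $s$-order. Hence $\ord_{E_2}(f)=\min\{\,2i+j : c_{ij}\neq 0\,\}=v(f)$ for every $f=\sum c_{ij}x_1^ix_2^j$, so $\ord_{E_2}=v$ and $E_2$ is product-type over $(\pr^2,\Delta)$.

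The delicate point — the step I would treat most carefully — is this last identification, and what makes it succeed is the tangency of $C$ to the coordinate line $(x_1=0)$ at $p_1$. Since $C$ and the line $(x_1=0)$ share the same tangent direction at $p_1$, their strict transforms meet $E_1$ at the same point; this forces $p_2$ to be the torus-fixed point of the cone $\langle(1,0),(1,1)\rangle$ created by the first star subdivision, so that blowing up $p_2$ is precisely the toric star subdivision introducing the ray $(2,1)$. In other words $E_2$ is genuinely the toric divisor of the monomial valuation $v$, and the only computation needing to be done by hand is the absence of cancellation above, which is immediate from the bookkeeping of $\tau$-powers.
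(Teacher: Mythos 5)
Your proposal is correct and follows essentially the same route as the paper: the same normalization of $(C,p_1)$, the same one-parameter subgroup $\rho(t)=\diag(1,t^{-2},t^{-1})$ with the same invariance check $\rho_t^*C=C$, and the same identification of the valuation of $\rho$ with the monomial valuation of weights $(2,1)$ via the discussion in \S \ref{product_P_section}. The only difference is presentational: where the paper delegates the identity $\ord_{E_2}=v$ to Lemma \ref{prod1_lem} (i.e.\ to the implicit toric picture), you verify it by an explicit two-chart blowup computation, which is a correct and self-contained way to fill in that step.
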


\begin{proof}
We may assume that $p_1=(1:0:0)$ and $C=(z_2^2+z_0z_1=0)\subset\pr^2_{z_0:z_1:z_2}$. 
Then, as we have seen in Lemma \ref{prod1_lem}, the divisorial valuation 
$\ord_{E_2}\colon\Bbbk(\pr^2)^*\to\Z$ corresponds to the $1$-parameter subgroup
\begin{eqnarray*}
\rho\colon\G_m&\to&\PGL(3)\\
t&\mapsto&\diag(1, t^{-2}, t^{-1}).
\end{eqnarray*}
For any $t\in\G_m$, we have $\rho_t^*C=C$. Thus $\rho$ factors through 
\[
\Aut(\pr^2,\Delta)\subset\Aut(\pr^2)=\PGL(3). 
\]
Hence $E_2$ is a product-type prime divisor over $(\pr^2, \Delta)$. 
\end{proof}

\subsection{Over $\mathbb{P}^1\times\mathbb{P}^1$}\label{product_PP_section}

In this section, let $C\subset \pr^1\times\pr^1$ be the diagonal, and let 
$\delta\in[0$, $1)\cap\Q$, and set $\Delta:=\delta C$. Let us consider the 
1-parameter subgroup
\begin{eqnarray*}
\rho\colon\G_m & \to & \PGL(2)\times\PGL(2) \subset\Aut(\pr^1\times\pr^1)\\
t & \mapsto & \left(\diag(1, t^{-1}), \diag(1, t^{-1})\right). 
\end{eqnarray*}
Since $C\subset\pr^1_{z_{10}:z_{11}}\times\pr^1_{z_{20}:z_{21}}$ is defined by the 
equation $z_{10}z_{21}=z_{11}z_{20}$, we have $\rho_t^*C=C$ for any $t\in\G_m$. 
Thus $\rho$ factors through $\Aut(\pr^1\times\pr^1, \Delta)$. 
On the other hand, the morphism 
\begin{eqnarray*}
\rho\colon \G_m\times\pr^1\times\pr^1 & \to & \pr^1\times\pr^1\\
\left(t; z_{10}: z_{11}; z_{20}: z_{21}\right) & \mapsto & 
\left(z_{10}: t^{-1}z_{11}; z_{20}: t^{-1}z_{21}\right),
\end{eqnarray*}
induces the inclusion 
\begin{eqnarray*}
\rho^*\colon \Bbbk(x_1, x_2) & \to & \Bbbk(x_1,x_2)(t) \\
x_1 & \mapsto & t^{-1}x_1, \\
x_2 & \mapsto & t^{-1}x_2, 
\end{eqnarray*}
where $x_i:=z_{i1}/z_{i0}$ for $i=1$, $2$. Thus, as in \cite[\S 3]{ha}, the divisorial valuation 
$v$ on $\Bbbk(\pr^1\times\pr^1)$ associates to $\rho$ is the quasi-monomial valuation 
on $\A^2_{x_1,x_2}=\pr^1\times\pr^1\setminus (z_{10}z_{20}=0)$ for coordinates 
$(x_1$, $x_2)$ with weights $(1$, $1)$. In other words, if $F$ is the exceptional divisor 
of the ordinary blowup of $\pr^1\times\pr^1$ along $(1:0 ; 1:0)$, then $v$ is equal to 
$\ord_F$. Thus we have proved the following proposition: 

\begin{proposition}\label{PP_prod_prop}
Let $F$ be the exceptional divisor of the ordinary blowup of 
$\pr^1\times\pr^1$ along a point on $C$. Then $F$ is product-type over 
$(\pr^1\times\pr^1, \Delta)$. 
\end{proposition}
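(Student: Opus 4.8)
The plan is to reduce the statement to the toric computation already carried out in the text. Concretely, $\pr^1\times\pr^1$ is a toric surface, and the ordinary blowup at a torus-fixed point is itself a toric morphism; the $1$-parameter subgroup $\rho$ written just above has been chosen so that its associated divisorial valuation is precisely $\ord_F$ for the exceptional divisor $F$ of that blowup. So the task is essentially to verify the three requirements of Definition \ref{beta_dfn} \eqref{beta_dfn5}: that $\rho$ lands inside $\Aut(\pr^1\times\pr^1, \Delta)$, and that $\ord_F$ equals the composition $\Bbbk(X)^*\xrightarrow{\rho^*}\Bbbk(X)(t)^*\xrightarrow{\ord_{(t^{-1})}}\Z$. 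Both have already been established in the surrounding discussion, so the proof will mostly be an act of assembling observations rather than producing new ones.

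First I would note that since $C$ is the diagonal and any point $p\in C$ can be moved to $(1:0 ; 1:0)$ by an element of $\PGL(2)\times\PGL(2)$ fixing $C$ (acting diagonally on the two factors), it suffices without loss of generality to treat the single point $(1:0 ; 1:0)$. This is the reduction that lets me invoke the explicit $\rho$ above. Next I would record that the computation preceding the statement shows the morphism $\rho\colon\G_m\times\pr^1\times\pr^1\to\pr^1\times\pr^1$ induces $\rho^*\colon\Bbbk(x_1,x_2)\to\Bbbk(x_1,x_2)(t)$ with $x_i\mapsto t^{-1}x_i$, and that the quasi-monomial valuation with weights $(1,1)$ on the chart $\A^2_{x_1,x_2}$ is exactly $\ord_F$ for the ordinary blowup along the origin of that chart, which is the point $(1:0 ; 1:0)$.

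Then I would verify the two defining conditions. For the group condition: since $C$ is cut out by $z_{10}z_{21}=z_{11}z_{20}$, a direct check shows $\rho_t^*C=C$ for every $t\in\G_m$, so $\rho$ factors through $\Aut(\pr^1\times\pr^1,\Delta)$; this is stated in the text. For the valuation condition: composing $\rho^*$ (sending $x_i\mapsto t^{-1}x_i$) with $\ord_{(t^{-1})}$ assigns to a monomial $x_1^{m}x_2^{n}$ the order $m+n$, which is precisely the value of the weight-$(1,1)$ quasi-monomial valuation, and hence of $\ord_F$. Combining these exactly matches Definition \ref{beta_dfn} \eqref{beta_dfn5}, so $F$ is product-type over $(\pr^1\times\pr^1,\Delta)$.

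I do not expect a serious obstacle, since every ingredient is already in place in the excerpt; the only point requiring a little care is the reduction to a single point, namely justifying that the stabilizer of the diagonal $C$ inside $\Aut(\pr^1\times\pr^1)$ acts transitively on the points of $C$. This follows because $\PGL(2)$ acts transitively on $\pr^1$ via the diagonal embedding $g\mapsto(g,g)$, which fixes $C$ setwise; so any point of $C$ is carried to $(1:0 ; 1:0)$ by such an automorphism, and since product-type is preserved under $\Aut(X,\Delta)$, treating one point suffices.
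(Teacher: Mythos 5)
Your proposal is correct and follows essentially the same route as the paper: the paper's proof is precisely the discussion preceding the proposition, i.e.\ checking that the explicit one-parameter subgroup $\rho(t)=(\diag(1,t^{-1}),\diag(1,t^{-1}))$ preserves $C$ and that its associated valuation is the weight-$(1,1)$ quasi-monomial valuation $\ord_F$ at $(1{:}0\,;1{:}0)$. The only point you make explicit that the paper leaves implicit is the reduction from an arbitrary point of $C$ to $(1{:}0\,;1{:}0)$ via the transitive diagonal $\PGL(2)$-action, and that addition is correct.
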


In order to prove Theorem \ref{mainthm} \eqref{mainthm2}, we need the following lemma: 

\begin{lemma}\label{PP_non_prod_lem}
The divisor $C$ on $\pr^1\times\pr^1$ is not a product-type prime divisor over 
$(\pr^1\times\pr^1, \Delta)$. 
\end{lemma}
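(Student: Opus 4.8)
The plan is to argue by contradiction. Suppose $C$ were product-type over $(\pr^1\times\pr^1,\Delta)$, so that $\ord_C=\ord_{(t^{-1})}\circ\rho^*$ for some $1$-parameter subgroup $\rho\colon\G_m\to\Aut(\pr^1\times\pr^1,\Delta)$. The first step is to pin down all such $\rho$. Since $\delta>0$ we have $\Aut(\pr^1\times\pr^1,\Delta)=\{\theta\mid\theta_*C=C\}$, and from $\Aut(\pr^1\times\pr^1)=(\PGL(2)\times\PGL(2))\rtimes\langle\tau\rangle$, with $\tau$ the swap of the two rulings, one checks that $(\phi,\psi)$ maps $C$ to $C$ if and only if $\psi=\phi$. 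Hence the stabilizer of $C$ in the identity component is the diagonally embedded $\PGL(2)$, and this is the identity component of $\Aut(\pr^1\times\pr^1,\Delta)$.

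Because $\G_m$ is connected, $\rho$ factors through this diagonal $\PGL(2)$. Every nontrivial $1$-parameter subgroup of $\PGL(2)$ is conjugate to $t\mapsto\diag(1,t^{-a})$ for a unique $a\in\Z_{>0}$, and the conjugating element $h$ yields $(h,h)$, which preserves $C$ and so lies in $\Aut(\pr^1\times\pr^1,\Delta)$. I may therefore assume $\rho(t)=(\diag(1,t^{-a}),\diag(1,t^{-a}))$. Exactly as in the computation preceding Proposition \ref{PP_prod_prop}, in the coordinates $x_i=z_{i1}/z_{i0}$ this gives $\rho^*x_i=t^{-a}x_i$, so the associated valuation is the monomial valuation on $\A^2_{x_1,x_2}$ with weights $(a,a)$; equivalently it equals $a\cdot\ord_F$, where $F$ is the exceptional divisor of the blowup of $\pr^1\times\pr^1$ at the point $(1:0;1:0)\in C$.

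The contradiction then comes from comparing centers: the valuation $a\cdot\ord_F$ has center the point $(1:0;1:0)$, of codimension $2$, whereas $c_X(\ord_C)=C$ has codimension $1$. Since a valuation determines its center, $\ord_C\neq\ord_{(t^{-1})}\circ\rho^*$ for every admissible $\rho$, which is the desired contradiction.

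The step I expect to require the most care is the reduction to equal weights $(a,a)$, for this is precisely where invariance of the diagonal is used: it forces $\rho$ into the diagonal $\PGL(2)$ rather than the full $\PGL(2)\times\PGL(2)$. Were the two torus weights allowed to be unequal, or one of them zero—as happens for the line subgroups of $\PGL(3)$ acting on $\pr^2$ in \S\ref{product_P_section}, where the induced valuation is $a'\cdot\ord_l$ along a line—the induced valuation could be divisorial along a curve of $X$ and the center comparison would fail. Thus the heart of the argument is to confirm that preserving the diagonal forces both weights to coincide and be positive, so that the resulting valuation is genuinely centered at a point of $C$.
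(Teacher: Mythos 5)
Your argument is correct in the range it covers, but it is a genuinely different route from the paper's. The paper never analyzes $\Aut(\pr^1\times\pr^1,\Delta)$: following \cite[Lemma 3.8]{ha}, it notes that if $C$ were product-type then $\pr^1\times\pr^1$ would have to be isomorphic to $\Proj$ of the graded algebra built from the filtration $\sF^jV_k=H^0\left(\pr^1\times\pr^1,\sO_{\pr^1\times\pr^1}(k,k)(-jC)\right)$, and a direct computation identifies this $\Proj$ with the weighted projective plane $\pr(1,1,2)$, which is not isomorphic to $\pr^1\times\pr^1$ (the former is singular). Your proof instead classifies the $1$-parameter subgroups of $\Aut(\pr^1\times\pr^1,\Delta)$ explicitly: invariance of $C$ forces $\rho$ into the diagonally embedded $\PGL(2)$, conjugation (by a diagonal pair $(h,h)$, which legitimately stays inside $\Aut(X,\Delta)$) reduces to equal weights $(a,a)$, and the induced valuation is $a\cdot\ord_F$ with center a point, while $c_X(\ord_C)=C$. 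This is sound: all the group-theoretic facts you invoke are standard, and equality of valuations does force equality of centers. Your route is more elementary; the paper's is more robust, since it needs no description of the automorphism group and no hypothesis on $\delta$.

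That last point is the one genuine (though minor) gap. The lemma sits in a section whose convention is $\delta\in[0,1)\cap\Q$, and your very first step invokes $\delta>0$ to identify $\Aut(\pr^1\times\pr^1,\Delta)$ with the stabilizer of $C$. When $\delta=0$ the automorphism group is all of $\Aut(\pr^1\times\pr^1)$, the reduction to the diagonal $\PGL(2)$ breaks down, and your proof as written does not apply, whereas the paper's proof covers this case uniformly. The patch is easy and stays within your method: a nontrivial $1$-parameter subgroup of $\PGL(2)\times\PGL(2)$ is conjugate to $t\mapsto\left(\diag(1,t^{-a}),\diag(1,t^{-b})\right)$ with $a,b\geq 0$ not both zero, and the induced valuation is either centered at a point (if $a,b>0$) or equals a positive multiple of $\ord_l$ for a fiber $l$ of one of the two rulings (if $ab=0$); since $C$ is a $(1,1)$-curve, it is neither a point nor a fiber, so the center comparison still rules out $\ord_C$. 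This also corrects your closing remark: for unequal or vanishing weights the center comparison does not ``fail,'' it merely compares curves instead of codimensions. Since every application in \S\ref{PP_section} has $\delta\in(0,1)$, the omission does not affect the paper's main results, but you should either restrict the statement to $\delta>0$ or add the extra case.
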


\begin{proof}
Assume not. Then, as in the proof in \cite[Lemma 3.8]{ha}, 
$\pr^1\times\pr^1$ must be isomorphic to 
\[
\sX_0:=\Proj\bigoplus_{k\in\Z_{\geq 0}}\left(\bigoplus_{j\in\Z_{\geq 0}}S_{k,j}\right), 
\]
where $S_{k,j}:=\sF^jV_k/\sF^{j+1}V_k$ with 
\[
\sF^jV_k:=H^0\left(\pr^1\times\pr^1, \sO_{\pr^1\times\pr^1}(k,k)(-jC)\right). 
\]
Note that 
\[
\sF^jV_k=\biggl\{(x_1-x_2)^j g(x_1,x_2)\,\,\bigg|\,\, g(x_1,x_2)\in\Bbbk[x_1,x_2]; \,\,
\begin{matrix}
\deg_{x_1}g \leq k-j,\,\\
\deg_{x_2}g \leq k-j.
\end{matrix}\biggr\}.\]
Thus we have the natural isomorphism 
\begin{eqnarray*}
S_{k,j} & \to & \{h(x)\in\Bbbk[x]\,|\, \deg h\leq 2k-2j\}\\
g(x_1, x_2) \,\,\MOD \,\, \sF^{j+1}V_k& \mapsto & g(x,x).
\end{eqnarray*}
In particular, the variety 
$\sX_0$ is isomorphic to the weighted projective plane $\pr(1,1,2)$ of 
weights $(1,1,2)$. This leads to a contradiction. 
\end{proof}

\section{On the projective plane}\label{P_section}

In this section, we set $X:=\pr^2$, let $C\subset X$ be a smooth conic, 
fix $\delta\in[0,1)\cap\Q$ and set $\Delta:=\delta C$. We prove the following theorem. 
Theorem \ref{mainthm} \eqref{mainthm1} is an immediate consequence 
of Theorem \ref{F_thm}. 

\begin{thm}\label{F_thm}
\begin{enumerate}
\renewcommand{\theenumi}{\arabic{enumi}}
\renewcommand{\labelenumi}{(\theenumi)}
\item\label{F_thm1}
If $\delta>3/4$ $($resp., $\delta\geq 3/4)$, then $(X, \Delta)$ is not K-semistable 
$($resp., not K-polystable$)$. 
\item\label{F_thm2}
The pair $(X, \Delta)$ is no longer K-stable for any $\delta\in[0,1)\cap\Q$. 
\item\label{F_thm3}
Assume that $\delta\leq 3/4$. 
For any prime divisor $F$ over $X$, we have $\hat{\beta}_{(X, \Delta)}(F)\geq 0$. 
\item\label{F_thm4}
If $\delta<3/4$ and if a prime divisor $F$ over $X$ satisfies that 
$\hat{\beta}_{(X, \Delta)}(F)=0$, 
then $F$ is a product-type prime divisor over $(X, \Delta)$. 
\end{enumerate}
\end{thm}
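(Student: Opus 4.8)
The proof of Theorem~\ref{F_thm} naturally splits along the four assertions, and the key computational engine throughout is Lemma~\ref{beta_lem}, which reduces $\hat\beta_{(X,\Delta)}(F)$ for a dreamy divisor to the single formula
\[
\hat{\beta}_{(X, \Delta)}(F)=1-\frac{\varepsilon(F)+\tau(F)}{3\cdot A_{(X, \Delta)}(F)},
\]
so the entire problem becomes one of computing the three numbers $\varepsilon(F)$, $\tau(F)$, and $A_{(X,\Delta)}(F)$ (together with the constraint $\varepsilon(F)\tau(F)=f_0\cdot(L^{\cdot2})$). Here $L=-(K_X+\Delta)=(3-2\delta)H$ with $H$ the hyperplane class, so $(L^{\cdot2})=(3-2\delta)^2$. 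First I would dispose of \eqref{F_thm1} and \eqref{F_thm2} by exhibiting explicit destabilizing divisors. For \eqref{F_thm2}, the natural candidate is a line $\ell$ (or the product-type $F^{(a,b)}$ of Lemma~\ref{prod1_lem}): since $\pr^2$ is \emph{homogeneous}, product-type divisors must have $\hat\beta=0$ by the $\Aut$-invariance forcing them onto the boundary, so no such divisor can give strict positivity, ruling out K-stability for all $\delta$. For \eqref{F_thm1}, the destabilizer should be the divisor $E_2$ of Proposition~\ref{prod2_prop}, obtained by blowing up a point $p_1\in C$ and then the point $p_2=E_1\cap C^{X_1}$; I would compute $A_{(X,\Delta)}(E_2)$, $\varepsilon(E_2)$, $\tau(E_2)$ directly from the standard diagram and plug into Lemma~\ref{beta_lem}, expecting $\hat\beta_{(X,\Delta)}(E_2)$ to be an explicit affine function of $\delta$ that is negative exactly when $\delta>3/4$ and vanishes at $\delta=3/4$.

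For the hard directions \eqref{F_thm3} and \eqref{F_thm4}, the strategy is to prove the lower bound $\hat\beta_{(X,\Delta)}(F)\geq0$ for \emph{all} prime divisors $F$ over $X$, using Remark~\ref{K_rmk} \eqref{K_rmk2} to reduce from dreamy divisors to arbitrary ones. I would first apply Proposition~\ref{tau_prop}: if $\tau(F)\leq A_{(X,\Delta)}(F)$ then $\hat\beta\geq1/4>0$, so it suffices to treat divisors with $\tau(F)>A_{(X,\Delta)}(F)$. By Proposition~\ref{dream_prop}, any such $F$ with $\hat\beta<1/3$ is automatically dreamy, so I may assume $F$ is primitive and (after passing to a worse divisor via Theorem~\ref{pltK_thm} if necessary) plt-type, which makes all the toric machinery of Lemmas~\ref{star_lem} and \ref{plt_lem} available. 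The crucial reduction is Theorem~\ref{pltK_thm}: since it produces, from any non-plt-type $F$, a plt-type $G$ with strictly smaller $\hat\beta$ and $c_X(G)\subset c_X(F)$, it suffices to verify the bound for plt-type divisors.

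The main work is then the plt-type case, and I expect this to be the principal obstacle. For a plt-type $F$ I would introduce the invariants $a^F,b^F$ from Definition~\ref{plt_dfn}, so that $A_X(F)=a^F+b^F$ (Lemma~\ref{plt_lem} \eqref{plt_lem2}), $f=a^Fb^F$ (Lemma~\ref{plt_lem} \eqref{plt_lem3}), and hence $A_{(X,\Delta)}(F)=a^F+b^F-(\coeff_F\pi^*\Delta)$ with the last term controlled by how the center of $F$ meets $C$. The quantity $f_0=-1/(F^{\cdot2})_Y=a^Fb^F$ gives $\varepsilon(F)\tau(F)=a^Fb^F(3-2\delta)^2$, so Lemma~\ref{beta_lem} reduces everything to bounding $\varepsilon(F)+\tau(F)$ from above by $3A_{(X,\Delta)}(F)$. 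To bound $\tau(F)$ I would use Lemma~\ref{hinpan_lem} \eqref{hinpan_lem2} against a well-chosen curve on $Y$—either the strict transform of a line through $c_X(F)$, or of the conic $C$ itself when the center sits on $C$—turning the nef/pseudoeffective threshold into an intersection inequality; symmetrically $\varepsilon(F)$ is pinned down by the contraction $\mu$ in the standard diagram via Lemma~\ref{hinpan_lem} \eqref{hinpan_lem1}. The delicate point is that the estimate must be uniform in the combinatorial type $(a^F,b^F)$ and must degrade to equality precisely on the product-type divisors of Lemma~\ref{prod1_lem} and Proposition~\ref{prod2_prop}; tracking the boundary case of the inequality is what yields \eqref{F_thm4}, since $\hat\beta=0$ with $\delta<3/4$ should force the intersection inequalities to be equalities, which geometrically pins $F$ down to a line or to the $(2,1)$-type divisor $E_2$, both product-type. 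I anticipate the casework—separating divisors whose center lies on $C$ from those whose center does not, and handling the line case $b^F=0$ separately—to be the most error-prone part, but each case reduces to a one-variable affine comparison in $\delta$ once $\varepsilon,\tau,A$ are computed.
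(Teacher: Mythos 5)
Your strategy for parts \eqref{F_thm3} and \eqref{F_thm4} is essentially the paper's: reduce via Proposition \ref{tau_prop}, Proposition \ref{dream_prop} and Theorem \ref{pltK_thm} to exceptional, dreamy, plt-type divisors, then bound $\varepsilon(F)+\tau(F)$ through Lemmas \ref{hinpan_lem}, \ref{star_lem}, \ref{plt_lem} and conclude with Lemma \ref{beta_lem}, tracking equality cases. That skeleton is sound (modulo the constant $1/(n+1)=1/3$, not $1/4$, in Proposition \ref{tau_prop} for surfaces, and modulo the fact that at $\delta=0$ the equality locus consists of \emph{all} the toric divisors $F^{(a,b)}$ of Lemma \ref{prod1_lem}, not just lines and $E_2$).

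However, your treatment of parts \eqref{F_thm1} and \eqref{F_thm2} contains a genuine gap: you have swapped the two witnesses. The divisor $E_2$ of Proposition \ref{prod2_prop} cannot destabilize for $\delta>3/4$, because a direct computation (this is the paper's Step 5) gives $A_{(X,\Delta)}(E_2)=3-2\delta$, $\varepsilon(E_2)=3-2\delta$, $\tau(E_2)=2(3-2\delta)$, hence by Lemma \ref{beta_lem}
\[
\hat{\beta}_{(X,\Delta)}(E_2)=1-\frac{3(3-2\delta)}{3(3-2\delta)}=0
\quad\text{for \emph{every} }\delta\in[0,1)\cap\Q,
\]
not an affine function of $\delta$ vanishing at $3/4$. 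Worse, your proposal is internally inconsistent: you cite Proposition \ref{prod2_prop}, which says $E_2$ \emph{is} product-type over $(X,\Delta)$, so by Definition \ref{K_dfn} \eqref{K_dfn2} it can never witness failure of K-polystability, whatever its $\hat{\beta}$. The correct destabilizer for \eqref{F_thm1} is the conic $C$ itself: $A_{(X,\Delta)}(C)=1-\delta$ gives $\hat{\beta}_{(X,\Delta)}(C)=(3-4\delta)/(6(1-\delta))$, which is negative exactly when $\delta>3/4$ and vanishes at $\delta=3/4$, while $C$ is not product-type over $(X,\Delta)$ by Lemma \ref{prod1_lem}. Conversely, the vanishing $\hat{\beta}_{(X,\Delta)}(E_2)=0$ is precisely what proves \eqref{F_thm2}; your proposed witnesses for \eqref{F_thm2} fail there, since for $\delta>0$ a line $\ell$ has $\hat{\beta}_{(X,\Delta)}(\ell)=2\delta/3>0$ and neither lines nor the $F^{(a,b)}$ lie in the image of a $1$-parameter subgroup of $\Aut(X,\Delta)$ (they do not preserve $C$); the appeal to homogeneity of $\pr^2$ proves nothing about the pair $(X,\delta C)$, whose automorphism group is only $\PGL(2)$.
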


\begin{proof}
The proof is based on the ideas in \cite[\S 4.2]{pltK}. However, we need 
more delicate arguments. 

\textbf{Step 1.}
Take any prime divisor $F$ \emph{on} $X$. Set $d:=\deg F$. If $F\neq C$, then we have 
\[
\hat{\beta}_{(X, \Delta)}(F)=
1-\frac{\int_0^{\frac{3-2\delta}{d}}(3-2\delta-d\cdot x)^2dx}{(3-2\delta)^2}
=1-\frac{3-2\delta}{3d}\geq 1-\frac{1}{d}\geq 0. 
\]
Moreover, equality holds if and only if $d=1$ and $\delta=0$. We already know in 
Lemma \ref{prod1_lem} that a line is product-type prime divisor over $X$. 
If $F=C$, then $d=2$ and $A_{(X, \Delta)}(C)=1-\delta$. Thus we have 
\[
\hat{\beta}_{(X, \Delta)}(C)=\frac{3-4\delta}{6(1-\delta)}. 
\]
By Lemma \ref{prod1_lem}, $C$ is not a product-type prime divisor over $(X, \Delta)$. 
Thus we have proved Theorem \ref{F_thm} \eqref{F_thm1}. 

\textbf{Step 2.}
Let us prove Theorem \ref{F_thm} \eqref{F_thm2}, \eqref{F_thm3} and \eqref{F_thm4}. 
From now on, we assume that $\delta\leq 3/4$. 
Let $F$ be a prime divisor over $X$. 
By Step 1, Proposition \ref{dream_prop} and Theorem \ref{pltK_thm}, we may assume that 
$F$ is exceptional over $X$, dreamy over $(X, \Delta)$ and plt-type over $(X, \Delta)$. 
Of course, $F$ is plt-type over $X$. 
Let $\pi\colon\tilde{X}=X_m\to\dots\to X_1\to X_0=X$, $E_i^*$ $(1\leq i\leq m)$, 
$a^F$, $b^F$, $f$, etc., be as in Definitions \ref{monoidal_dfn}, \ref{star_dfn} and 
\ref{plt_dfn}. Moreover, let us set 
$a:=a^F$, $b:=b^F$, $\varepsilon:=\varepsilon(F)$, $\tau:=\tau(F)$, 
$A_i:=A_{(X, \Delta)}(E_i)$ $(1\leq i\leq m)$ and $A:=A_{(X, \Delta)}(F)$ for simplicity. 

Let $l_1\subset X$ be a general line passing through $p_1$. By Lemmas \ref{hinpan_lem}, 
\ref{star_lem} and \ref{plt_lem}, we have 
\[
0\leq\left(\left(\pi^*L-\frac{\tau}{ab}E_m^*\right)\cdot l_1^{\tilde{X}}\right)
=3-2\delta-\frac{\tau}{a}. 
\]
Thus we get $\tau\leq a(3-2\delta)$ and $\varepsilon+\tau\leq(a+b)(3-2\delta)$ 
(recall that $\varepsilon\tau=ab(3-2\delta)^2$ by Lemma \ref{beta_lem}). 

\textbf{Step 3.}
We consider the case $m=1$, i.e., $a=b=1$. Then, since 
$Y=\pr_{\pr^1}(\sO\oplus\sO(1))$, we have $\varepsilon=\tau=3-2\delta$. 
If $p_1\not\in C$, then we have $A=2$; if $p_1\in C$, then we have $A=2-\delta$. 
By Lemma \ref{beta_lem}, we get 
\[
\hat{\beta}_{(X, \Delta)}(F)=\begin{cases}
\frac{2}{3}\delta & \text{if }p_1\not\in C,\\
\frac{\delta}{6-3\delta} & \text{if } p_1\in C.
\end{cases}\]
Hence we have $\hat{\beta}_{(X, \Delta)}(F)\geq 0$. 
If $\hat{\beta}_{(X, \Delta)}(F)=0$, then $\delta=0$ and 
$F$ is a product-type prime divisor over $(X, \Delta)\,(=X)$ by Lemma \ref{prod1_lem}. 

\textbf{Step 4.}
Thus we may further assume that $m\geq 2$. 
Let $l_0\subset X$ be the unique line such that $l_0^{X_2}\cap E_2\neq\emptyset$. 
Let us set 
\[
j_0:=\max\{2\leq i\leq k\,\,|\,\,l_0^{X_i}\cap E_i\neq\emptyset\}, 
\]
where $k=\lceil(a/b)\rceil$ as in Definition \ref{plt_dfn}. 

We consider the case $p_2\not\in C^{X_1}$. If $p_1\in C$ and $\delta>0$, then 
we have $A_1=2-\delta$ and 
\[
A_i=\begin{cases}
A_{i-1}+1 & 2\leq i\leq k, \\
A_{q(i)}+A_{i-1} & k+1\leq i\leq m.
\end{cases}\]
Thus we can inductively show that $A_i=a_i+b_i-b_i\delta$. In particular, we have 
$A=a+b-b\delta$. By Lemma \ref{beta_lem} and Step 2, we have 
\[
\hat{\beta}_{(X, \Delta)}(F)\geq 1-\frac{(a+b)(3-2\delta)}{3(a+(1-\delta)b)}
=\frac{\delta(2a-b)}{3(a+(1-\delta)b)}>0.
\]
If $p_1\not\in C$, then we have $A=a+b$. By Lemma \ref{beta_lem} and Step 2, we have 
\[
\hat{\beta}_{(X, \Delta)}(F)\geq 1-\frac{(a+b)(3-2\delta)}{3(a+b)}=\frac{2}{3}\delta\geq 0
\]
by Step 2. Assume that $\hat{\beta}_{(X, \Delta)}(F)=0$. Then $\delta=0$ and 
$(\varepsilon$, $\tau)=(3b$, $3a)$. Thus $\mu\colon Y\to Z$ is birational, where $\mu$ 
is as in Definition \ref{diag_dfn}. If $l_0^Y$ is not $\mu$-exceptional, then, 
by Lemma \ref{hinpan_lem}, we have 
\[
0\leq\left(\left(\pi^*L-\frac{\tau}{ab}E_m^*\right)\cdot l_0^{\tilde{X}}\right)
=3\left(1-\frac{1}{b}\coeff_{\tilde{E}_{j_0}}E_m^*\right). 
\]
However, by Lemma \ref{plt_lem}, we have $\coeff_{\tilde{E}_{j_0}}E_m^*>b$. This leads to 
a contradiction. Thus $l_0^Y$ is $\mu$-exceptional. 
By Lemma \ref{hinpan_lem}, we have 
\[
0=\left(\left(\pi^*L-\frac{\varepsilon}{ab}E_m^*\right)\cdot l_0^{\tilde{X}}\right)
=3\left(1-\frac{1}{a}\coeff_{\tilde{E}_{j_0}}E_m^*\right). 
\]
Thus we have $\coeff_{\tilde{E}_{j_0}}E_m^*=a$. This implies that $j_0=k$. 
As we have already seen in Lemma \ref{prod1_lem}, $F$ is a product-type prime 
divisor over $(X, \Delta)\,(=X)$. 

\textbf{Step 5.}
Thus we may further assume that $\delta>0$ and $p_2\in C^{X_1}$. 
In this case, $l_0^{X_1}$ and $C^{X_1}$ intersect transversally at $p_2$. 

We consider the case $m=2$. In this case, $F$ is product-type over $(X, \Delta)$ 
by Proposition \ref{prod2_prop}. Since $\tilde{X}$ is toric, we can easily show that 
$l_0^Y$ is the unique $\mu$-exceptional curve. By Lemma \ref{hinpan_lem}, we have 
\[
0=\left(\left(\pi^*L-\frac{\varepsilon}{2}E_2^*\right)\cdot l_0^{\tilde{X}}\right)
=3-2\delta-\varepsilon.
\]
Thus we have $(\varepsilon$, $\tau)=(3-2\delta$, $2(3-2\delta))$, and 
$\hat{\beta}_{(X, \Delta)}(F)=0$ by Lemma \ref{beta_lem} (note that $A=3-2\delta$). 
In particular, we have proved Theorem \ref{F_thm} \eqref{F_thm2}. 

\textbf{Step 6.}
Thus we may further assume that $m\geq 3$. We consider the case $p_3\in E_1^{X_2}$. 
In this case, we have $k=2$. Thus $2b>a$ holds. Since $A_1=2-\delta$, 
$A_2=3-2\delta$ and $A_i=A_{q(i)}+A_{i-1}$ for any $3\leq i\leq m$, we can inductively 
show that $A_i=a_i+b_i-a_i\delta$. In particular, we have $A=a+b-a\delta$. 
By Lemma \ref{beta_lem} and Step 2, we get 
\[
\hat{\beta}_{(X, \Delta)}(F)\geq 1-\frac{(a+b)(3-2\delta)}{3((1-\delta)a+b)}
=\frac{\delta(2b-a)}{3((1-\delta)a+b)}>0.
\]

\textbf{Step 7.}
Thus we may further assume that $p_3\not\in E_1^{X_2}$. Then $k\geq 3$. In particular, 
we have $a>2b$. Let us set 
\[
j_C:=\max\{2\leq i\leq k\,\,|\,\,E_i\cap C^{X_i}\neq \emptyset\}.
\]
Since $l_0^{X_2}\cap C^{X_2}=\emptyset$, either $j_0$ or $j_C$ is equal to $2$. 
By the definitions of $j_C$ and $k$, we have 
\[
A_i=\begin{cases}
i+1-i\delta & 2\leq i\leq j_C, \\
i+1-j_C\delta & j_C\leq i\leq k, \\
A_{q(i)}+A_{i-1} & k+1\leq i\leq m.
\end{cases}\]
Therefore, we can inductively show that, for any $k\leq i\leq m$, 
\[
A_i=\begin{cases}
a_i+b_i-j_Cb_i\delta & \text{if }j_C<k, \\
a_i+b_i-a_i\delta & \text{if }j_C=k.
\end{cases}\]
In particular, we have $A=a+b-\min\{j_Cb$, $a\}\delta$. 

Assume that $j_C=2$. Then $A=a+b-2b\delta$.
By Lemma \ref{beta_lem} and Step 2, we get 
\[
\hat{\beta}_{(X, \Delta)}(F)\geq 1-\frac{(a+b)(3-2\delta)}{3(a+(1-2\delta)b)}
=\frac{2\delta(a-2b)}{3(a+(1-2\delta)b)}>0.
\]

\textbf{Step 8.}
Thus we may further assume that $j_C\geq 3$. This implies that $j_0=2$. 
If $l_0^Y$ is $\mu$-exceptional, then we have 
\[
0=\left(\left(\pi^*L-\frac{\varepsilon}{ab}E_m^*\right)\cdot l_0^{\tilde{X}}\right)
=3-2\delta-\frac{2\varepsilon}{a}
\]
by Lemma \ref{hinpan_lem}. 
Thus we get $\varepsilon+\tau=(a/2+2b)(3-2\delta)$. 
If $l_0^Y$ is not $\mu$-exceptional, then we have 
\[
0\leq\left(\left(\pi^*L-\frac{\tau}{ab}E_m^*\right)\cdot l_0^{\tilde{X}}\right)
=3-2\delta-\frac{2\tau}{a}.
\]
by Lemma \ref{hinpan_lem}. Thus, in any case, we have the inequality 
\[
\varepsilon+\tau\leq\left(\frac{a}{2}+2b\right)(3-2\delta). 
\]
By Lemma \ref{beta_lem}, we have 
\begin{eqnarray*}
\hat{\beta}_{(X, \Delta)}(F)&\geq&1-
\frac{(\frac{a}{2}+2b)(3-2\delta)}{3(a+b-\min\{j_Cb,a\}\delta)}\\
&\geq&1-\frac{(a+4b)(3-2\delta)}{6((1-\delta)a+b)}
=\frac{(3-4\delta)(a-2b)}{6((1-\delta)a+b)}\geq 0.
\end{eqnarray*}
Moreover, if $\delta<3/4$, then $\hat{\beta}_{(X, \Delta)}(F)>0$. 

As a consequence, we have completed the proof of Theorem \ref{F_thm}. 
\end{proof}

\begin{remark}\label{P2_rmk}
One may expects that there might be a positive constant $\varepsilon_0$ such that 
$\hat{\beta}_X(F)\geq \varepsilon_0$ holds for any non-product-type 
prime divisor 
$F$ over $X=\pr^2$. However, this is not true. See the following example. 
\end{remark}

\begin{example}\label{P2_ex}
Let $l\subset X$ be a line. Fix any $m\geq 4$. Take any point $p_1\in l$ and let us 
consider the sequence of monoidal transforms 
$\pi'\colon X_{m-1}\to\cdots\to X_1\to X_0$ obtained by: 
\begin{itemize}
\item
$X_0:=X$, $\pi_1\colon X_1\to X_0$ is the blowup along $p_1$ and let $E_1\subset X_1$ 
be the $\pi_1$-exceptional curve. 
\item
For any $2\leq i\leq m-2$, let $p_{i+1}\in X_i$ be the intersection of $l^{X_i}$ and $E_i$, 
let $\pi_{i+1}\colon X_{i+1}\to X_i$ be the blowup along $p_{i+1}$, and let 
$E_{i+1}\subset X_{i+1}$ be the $\pi_{i+1}$-exceptional curve.
\end{itemize}
Moreover, let us take $p_m\in E_{m-1}$ with 
$p_m\not\in l^{X_{m-1}}\cup E_{m-2}^{X_{m-1}}$, let $\pi_m\colon X_m\to X_{m-1}$ 
be the blowup along $p_m$ and let $E_m\subset X_m$ be the $\pi_m$-exceptional 
curve. Set $\pi:=\pi'\circ\pi_m$, $\tilde{X}:=X_m$ and let $\tilde{E}_i$ (resp., $\tilde{l}$) 
be the strict transform of $E_i$ (resp., $l$) on $\tilde{X}$. 
Then the dual graph of $\tilde{E}_1,\dots,\tilde{E}_m, \tilde{l}$ is the following: 
\begin{center}
    \begin{picture}(140, 75)(0, 20)
    \put(0, 60){\circle{10}}
    \put(0, 75){\makebox(0, 0)[b]{$\tilde{E}_1$}}
    \put(5, 60){\line(1, 0){31}}
    \put(38, 60){\line(1, 0){2}}
    \put(42, 60){\line(1, 0){2}}
    \put(46, 60){\line(1, 0){2}}
    \put(50, 60){\line(1, 0){30}}
    \put(85, 60){\circle{10}}
    \put(85, 75){\makebox(0, 0)[b]{$\tilde{E}_{m-1}$}}
    \put(90, 60){\line(1, 0){30}}
    \put(125, 60){\circle{10}}
    \put(125, 75){\makebox(0, 0)[b]{$\tilde{l}$}}
    \put(85, 55){\line(0, -1){30}}
    \put(85, 20){\circle{10}}
    \put(65, 15){\makebox(0, 0)[b]{$\tilde{E}_m$}}
    \end{picture}
\end{center}
Note that $\left(\tilde{E}_i^{\cdot 2}\right)=-2$ for $1\leq i\leq m-1$ and 
$\left(\tilde{l}^{\cdot 2}\right)=-(m-2)$. 
By \cite[Proposition 4.10]{KoMo}, the morphism $\pi$ decomposes into 
\[
\tilde{X}\xrightarrow{\nu}Y\xrightarrow{\sigma}X
\]
such that the set of $\nu$-exceptional divisors on $\tilde{X}$ is equal to the set 
$\{\tilde{E}_i\}_{1\leq i\leq m-1}$. Obviously, $F:=\nu_*\tilde{E}_m$ is primitive over $X$ 
and $F$ is plt-type over $X$ by \cite[Theorem 4.15]{KoMo}. 
Again by \cite[Proposition 4.10]{KoMo}, we can contract 
$\tilde{E}_1,\dots,\tilde{E}_{m-1}$, 
$\tilde{l}$. In particular, there exists a birational morphism $\mu\colon Y\to Z$ such that 
$l^Y$ is the unique $\mu$-exceptional curve. This implies that $F$ is dreamy over $X$ 
(see \cite{HK} for example) and the standard diagram with respects to $F$ consists of 
$\sigma$ and $\mu$. 

From the construction, we have $a^F=m$, $b^F=1$, $f=m$, $A_X(F)=m+1$. 
Moreover, by Lemma \ref{hinpan_lem}, we have the equality
\[
0=\left(\left(\pi^*L-\frac{\varepsilon(F)}{m}E_m^*\right)\cdot \tilde{l}\right)
=3-\frac{\varepsilon(F)}{m}\coeff_{\tilde{E}_{m-1}}E_m^*.
\]
By Lemma \ref{star_lem}, we have $\varepsilon(F)=3m/(m-1)$. 
By Lemma \ref{beta_lem}, we get 
\[
\hat{\beta}_X(F)
=1-\frac{\frac{3m}{m-1}+3(m-1)}{3(m+1)}=\frac{m-2}{(m+1)(m-1)}.
\]
Therefore, we have $\hat{\beta}_X(F)\searrow 0$ when $m\to \infty$. 
\end{example}

\section{On the product of the projective lines}\label{PP_section}

In this section, we set $X:=\pr^1\times\pr^1$, let $C\subset X$ be the diagonal, 
fix $\delta\in(0,1)\cap\Q$ and set $\Delta:=\delta C$. We recall the 
following result: 

\begin{thm}[{see \cite{kempf, li, blum, BJ}}]\label{PPKss_thm}
$X=\pr^1\times\pr^1$ is K-semistable. 
\end{thm}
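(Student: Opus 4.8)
The plan is to apply the valuative criterion of Remark \ref{K_rmk} \eqref{K_rmk2}: with $\Delta=0$, $L=-K_X\simeq\sO_X(2,2)$ and $(L^{\cdot 2})=8$, it suffices to prove $\hat{\beta}_X(F)\geq 0$ for every prime divisor $F$ over $X=\pr^1\times\pr^1$. The first observation I would exploit is that $X$ is a smooth toric Fano surface, so the torus action can be used to cut down the class of divisors that need to be tested.

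By the works \cite{blum, li, BJ}, the stability threshold (delta invariant) of a toric Fano variety, namely the infimum of $A_X(F)\big/S_L(F)$ with $S_L(F):=(L^{\cdot 2})^{-1}\int_0^\infty\vol(L-xF)\,dx$ taken over all prime divisors $F$ over $X$, is already computed by torus-invariant (monomial) divisorial valuations; in particular, if $\hat{\beta}_X(F)\geq 0$ holds for all \emph{toric} $F$, then it holds for all $F$. This reduces the problem to the valuations $v_u$ indexed by primitive vectors $u\in N$. For such $v_u$ I would then compute both the log discrepancy $A_X(v_u)$ and the integral $\int_0^\infty\vol(L-xv_u)\,dx$ combinatorially from the moment polytope $P\subset\R^2$ of $-K_X$. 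For the fan of $X$ with rays $\R_{\geq 0}(\pm 1,0)$ and $\R_{\geq 0}(0,\pm 1)$, this polytope is the square $P=[-1,1]^2$, which contains the origin in its interior. The standard toric formulas express $\hat{\beta}_X(v_u)$ through $u$, the support function of $P$, and the barycenter $\mathbf{b}(P)$ of $P$, and the resulting condition ``$\hat{\beta}_X(v_u)\geq 0$ for all $u\in N$'' is equivalent to $\mathbf{b}(P)=\mathbf{0}$. Since $P=[-1,1]^2$ is invariant under $m\mapsto-m$, its barycenter is the origin, so the desired inequality holds for every toric $v_u$; combined with the reduction step this gives $\hat{\beta}_X(F)\geq 0$ for all $F$, hence K-semistability.

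The step I expect to be the main obstacle is the reduction to toric valuations: one must justify that the infimum defining the threshold of the toric surface $X$ is attained, in the limit, by torus-invariant valuations. Concretely this combines the existence of a valuation computing the threshold \cite{blum} with a Kempf-type equivariance argument \cite{kempf} (together with the threshold computations of \cite{li, BJ}) showing that such a minimizer may be taken invariant under the maximal torus. This is precisely the content I would isolate as a cited black box rather than reprove. As a self-contained alternative, one could instead use that the stability threshold of $\pr^1$ equals $1$ together with a product formula equating the threshold of $\pr^1\times\pr^1$ with the minimum of the factors' thresholds, again yielding a threshold of $1$ and hence K-semistability; in that route the hard input is the product formula for delta invariants.
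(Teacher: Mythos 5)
You should note first that the paper never proves Theorem \ref{PPKss_thm} at all: it is imported as a black box, with \cite{kempf, li, blum, BJ} cited precisely because those works supply purely algebraic proofs (see also Remark \ref{PPKss_rmk}), and the body of the paper only ever uses the bare statement, in Step 2 of the proof of Theorem \ref{FF_thm}. So the comparison is really between your sketch and the cited literature, and on that score your main route is sound: by Remark \ref{K_rmk} \eqref{K_rmk2} it suffices to check $\hat{\beta}_X(F)\geq 0$ for all prime divisors $F$ over $X$; the reduction of the stability threshold of a toric Fano variety to torus-invariant valuations is exactly the content of the toric computations in \cite{BJ} (and is also immediate from \cite[Theorem 1.4]{LWX}, which the paper cites elsewhere for toric pairs); and your barycenter computation is correct, since for a toric valuation $v_u$ one has $A_X(v_u)-S(v_u)=-\langle \mathbf{b}(P),u\rangle$ with $P=[-1,1]^2$, so $\hat{\beta}_X(v_u)\geq 0$ for all $u$ if and only if $\mathbf{b}(P)=\mathbf{0}$, which holds by symmetry of the square. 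Leaving the equivariant reduction as a cited black box is legitimate and is no less detailed than what the paper itself does. One caution about your ``self-contained alternative'': the product formula $\delta(X_1\times X_2)=\min\{\delta(X_1),\delta(X_2)\}$ is Zhuang's product theorem, which is not among the paper's references (and postdates them), so within the toolkit the paper allows itself that route is not actually available; the toric route is the one genuinely supported by the citations \cite{kempf, li, blum, BJ}.
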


\begin{remark}\label{PPKss_rmk}
When $\Bbbk=\C$, the above result is well-known (see \cite{tian, don}). We emphasize 
that some proofs of Theorem \ref{PPKss_thm} are purely algebraic. When $\Bbbk=\C$, 
the K-polystability of $X$ is also known (see \cite{B}). Moreover, recently, 
K-polystability of $X$ was proved purely algebraically by \cite{LWX}. 
\end{remark}

In this section, we algebraically prove the following theorem 
by using Theorem \ref{PPKss_thm}. Theorem \ref{mainthm} \eqref{mainthm2} is an 
immediate consequence of Theorem \ref{FF_thm}.

\begin{thm}\label{FF_thm}
\begin{enumerate}
\renewcommand{\theenumi}{\arabic{enumi}}
\renewcommand{\labelenumi}{(\theenumi)}
\item\label{FF_thm1}
If $\delta>1/2$ $($resp., $\delta\geq 1/2)$, then $(X, \Delta)$ is not K-semistable 
$($resp., not K-polystable$)$. 
\item\label{FF_thm2}
The pair $(X, \Delta)$ is no longer K-stable for any $\delta\in(0,1)\cap\Q$. 
\item\label{FF_thm3}
Assume that $\delta\leq 1/2$. 
For any prime divisor $F$ over $X$, we have $\hat{\beta}_{(X, \Delta)}(F)\geq 0$. 
\item\label{FF_thm4}
If $\delta<1/2$ and if a prime divisor $F$ over $X$ satisfies that 
$\hat{\beta}_{(X, \Delta)}(F)=0$, 
then $F$ is a product-type prime divisor over $(X, \Delta)$. 
\end{enumerate}
\end{thm}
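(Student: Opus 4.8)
The plan is to exploit the fact that $L:=-(K_X+\Delta)$ is proportional to $-K_X$ and to feed in the K-semistability of $X$ (Theorem~\ref{PPKss_thm}). Since $-K_X\sim\sO_X(2,2)$ and $C\in|\sO_X(1,1)|$, we have $2C\sim -K_X$ and $L=\tfrac{2-\delta}{2}(-K_X)$. First I would settle the easy statements. For prime divisors \emph{on} $X$ only the two rulings and $C$ matter: a ruling $F$ gives $\hat{\beta}_{(X,\Delta)}(F)=\delta/2>0$, while $C$ is dreamy (as $X$ is a toric Mori dream space) with
\[
\hat{\beta}_{(X,\Delta)}(C)=\frac{1-2\delta}{3(1-\delta)},
\]
which is negative for $\delta>1/2$ and zero for $\delta=1/2$; since $C$ is not product-type by Lemma~\ref{PP_non_prod_lem}, this yields \eqref{FF_thm1}. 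For \eqref{FF_thm2}, a direct two-step volume computation on the single blowup $Y\to X$ at a point $p\in C$ gives $\int_0^\infty\vol(L-xF)dx=2(2-\delta)^3$, hence $\hat{\beta}_{(X,\Delta)}(F)=0$ for this $F$, which is product-type by Proposition~\ref{PP_prod_prop}.

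For \eqref{FF_thm3} and \eqref{FF_thm4} the central step is the identity, valid for every prime divisor $F$ over $X$,
\[
\hat{\beta}_{(X,\Delta)}(F)=\frac{A_X(F)\,\hat{\beta}_X(F)+\delta\left(\tfrac12 S_X(F)-\ord_F(C)\right)}{A_{(X,\Delta)}(F)},
\]
where $S_X(F):=\frac{1}{((-K_X)^{\cdot2})}\int_0^\infty\vol(-K_X-yF)dy$, so that $A_X(F)\,\hat{\beta}_X(F)=A_X(F)-S_X(F)$. I would derive this from the scaling $\vol(L-xF)=\left(\tfrac{2-\delta}{2}\right)^2\vol\!\left(-K_X-\tfrac{2}{2-\delta}xF\right)$ together with $A_{(X,\Delta)}(F)=A_X(F)-\delta\,\ord_F(C)$. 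By Theorem~\ref{PPKss_thm} we have $\hat{\beta}_X(F)\geq 0$, so the numerator is automatically positive when $\ord_F(C)\leq\tfrac12 S_X(F)$; in particular, if $c_X(F)\not\subseteq C$ then $\ord_F(C)=0$ and $\hat{\beta}_{(X,\Delta)}(F)>0$. Since the numerator is affine in $\delta$ and nonnegative at $\delta=0$, for divisors with center on $C$ everything reduces to the single inequality at the extremal value $\delta=1/2$,
\[
\ord_F(C)\leq 2A_X(F)-\tfrac32 S_X(F).
\]

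To prove this I would mirror the strategy of Theorem~\ref{F_thm}: by Proposition~\ref{dream_prop} and Theorem~\ref{pltK_thm} reduce to $F$ plt-type over $(X,\Delta)$, run the sequence of monoidal transforms of Definition~\ref{monoidal_dfn} starting at $p_1\in C$, and track $a^F,b^F$ of Definition~\ref{plt_dfn} together with an index $j_C$ recording how long the center follows $C$ (the analogue of $j_0,j_C$ in the $\pr^2$ argument). For the divisors that are monomial in the two rulings through $p_1$ one computes $S_X(F)=A_X(F)=a^F+b^F$ and $\ord_F(C)=\min\{a^F,b^F\}$, and the inequality reduces to $\min\{a^F,b^F\}\leq\tfrac12(a^F+b^F)$, which is clear. \textbf{The hard part will be} the case of divisors highly tangent to $C$, where $\ord_F(C)$ is large: here I would bound $S_X(F)$ from above by intersecting $-K_X-yF$ with general members of $|\sO_X(1,0)|$ and $|\sO_X(0,1)|$ through $p_1$ and applying the log-concavity estimate of Proposition~\ref{convex_prop}, while $\ord_F(C)$ is controlled via $\tau_X(F)\geq 2\,\ord_F(C)$ coming from $2C\sim -K_X$. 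The obstacle is precisely the sharpness required: the naive bounds $\tau_X(F)\leq 3S_X(F)$ and $\ord_F(C)\leq\tau_X(F)/2$ already fail for the product-type divisor, so the sharper linear estimate must be extracted from the monoidal-transform combinatorics, with K-semistability of $X$ supplying the baseline at $\delta=0$ and disposing of the center-off-$C$ divisors.

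Finally, for \eqref{FF_thm4} I would retrace the equality case. If $\hat{\beta}_{(X,\Delta)}(F)=0$ with $F$ not plt-type, Theorem~\ref{pltK_thm} would produce a plt-type $G$ with $\hat{\beta}_{(X,\Delta)}(G)<0$, contradicting \eqref{FF_thm3}; so $F$ is plt-type. The identity forces $c_X(F)\subseteq C$, and for $\delta<1/2$ each inequality in the previous paragraph becomes strict except when $F$ is the ordinary blowup of a point of $C$, which is product-type by Proposition~\ref{PP_prod_prop}; the divisor $C$ itself is excluded since $\hat{\beta}_{(X,\Delta)}(C)>0$ for $\delta<1/2$. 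This gives the product-type conclusion and completes the proof.
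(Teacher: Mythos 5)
Your treatment of parts (1) and (2) is correct and matches the paper: the computation $\hat{\beta}_{(X,\Delta)}(C)=\tfrac{1-2\delta}{3(1-\delta)}$ combined with Lemma \ref{PP_non_prod_lem} gives (1), and the explicit volume computation for the ordinary blowup of a point of $C$, together with Proposition \ref{PP_prod_prop}, gives (2). Your scaling identity is also correct and is, in substance, exactly the paper's Step 2: since $L=\tfrac{2-\delta}{2}(-K_X)$, one has $\hat{\beta}_{(X,\Delta)}(F)=1-\tfrac{2-\delta}{2}\cdot\tfrac{A_X(F)(1-\hat{\beta}_X(F))}{A_{(X,\Delta)}(F)}$, and Theorem \ref{PPKss_thm} disposes of every $F$ with $c_X(F)\not\subset C$. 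Your reduction of (3) to the single inequality $\ord_F(C)\leq 2A_X(F)-\tfrac{3}{2}S_X(F)$ at $\delta=1/2$ (by affineness of the numerator in $\delta$) is a clean and valid reformulation.

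However, there is a genuine gap: that inequality for divisors with $c_X(F)\subset C$ is precisely the heart of the theorem, and you do not prove it --- you explicitly defer it as ``the hard part,'' note that the naive bounds fail (correctly, since equality already holds for $C$ and for the ordinary blowup), and say the sharp estimate ``must be extracted from the monoidal-transform combinatorics'' without extracting it. This deferred portion is Steps 3--11 of the paper's proof, i.e.\ most of the work. Concretely, what is missing is: (i) the lower bound $\hat{\beta}_{(X,\Delta)}(F)\geq 1-\tfrac{2ab(2-\delta)^2+\varepsilon^2}{3A\varepsilon}$ obtained from the log-concavity estimate (Proposition \ref{convex_prop}) applied at $x=\varepsilon(F)$; (ii) the formula $A=a+b-\min\{j_Cb,\,a\}\delta$ obtained by tracking how long the centers follow $C$; and, most importantly, (iii) the exact determination of the nef threshold $\varepsilon(F)$ in each case, which requires exhibiting case-dependent auxiliary curves whose strict transforms stay nef (Lemma \ref{star_lem} \eqref{star_lem5}): the rulings when $k=2$, the conic $C$ itself when $j_C^2b\geq 2a$, and --- in the residual cases, which reduce to $(j_C,k)=(2,3)$ or $(3,5)$ --- a line, respectively a smooth conic, on $\pr^2$ after contracting the two rulings through $p_1$. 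Without this case division and these exact values of $\varepsilon(F)$, neither (3) nor the equality analysis in (4) (which you also route through the same unproven paragraph) is established; your plan of intersecting with general members of $|\sO_X(1,0)|$ and $|\sO_X(0,1)|$ only yields the non-sharp bounds you yourself observe are insufficient.
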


\begin{proof}
The proof is similar to the proof of Theorem \ref{F_thm}. The proof of 
Theorem \ref{FF_thm} is more complicated than the proof of Theorem \ref{F_thm}. 

\textbf{Step 1.}
Since $A_{(X, \Delta)}(C)=1-\delta$, we have 
\[
\hat{\beta}_{(X, \Delta)}(C)=1-\frac{1}{1-\delta}\cdot\frac{\int_0^{2-\delta}2(2-\delta-x)^2
dx}{2(2-\delta)^2}=\frac{1- 2\delta}{3(1-\delta)}.
\]
By Lemma \ref{PP_non_prod_lem}, we have proved Theorem \ref{FF_thm} \eqref{FF_thm1}. 
We may assume that $\delta\leq 1/2$. 

\textbf{Step 2.}
Take any prime divisor $F$ over $X$. By Theorem \ref{PPKss_thm}, we have 
\begin{eqnarray*}
\hat{\beta}_{(X, \Delta)}(F)&=&1-\frac{1}{A_{(X, \Delta)}(F)}\cdot\frac{2-\delta}{2}\cdot
\frac{\int_0^\infty\vol(-K_X-xF)dx}{\left((-K_X)^{\cdot 2}\right)}\\
&=&
1-\frac{2-\delta}{2}\cdot\frac{A_X(F)
\left(1-\hat{\beta}_X(F)\right)}{A_{(X, \Delta)}(F)}\\
&\geq& 1-\frac{2-\delta}{2}\cdot\frac{A_X(F)}{A_{(X, \Delta)}(F)}.
\end{eqnarray*}
Thus, if $A_X(F)=A_{(X, \Delta)}(F)$, i.e., if $c_X(F)\not\subset C$ holds, 
then we have the inequality $\hat{\beta}_{(X, \Delta)}(F)\geq \delta/2>0$. 

\textbf{Step 3.}
Thus we may assume that $F$ is exceptional over $X$ and $c_X(F)\in C$. 
Moreover, by Proposition \ref{dream_prop} and Theorem \ref{pltK_thm}, 
we may assume that $F$ is dreamy over $(X, \Delta)$ and plt-type over $(X, \Delta)$. 
Let $l_1$, $l_2\subset X$ be the fibers of the fibrations 
$\pr^1_{z_{10}:z_{11}}\times\pr^1_{z_{20}:z_{21}}\to\pr^1_{z_{10}:z_{11}}$ and 
$\pr^1_{z_{10}:z_{11}}\times\pr^1_{z_{20}:z_{21}}\to\pr^1_{z_{20}:z_{21}}$ passing through 
$c_X(F)$. 
Let $\pi\colon\tilde{X}=X_m\to\dots\to X_1\to X_0=X$, $E_i^*$ $(1\leq i\leq m)$, 
$a^F$, $b^F$, $f$, etc., be as in Definitions \ref{monoidal_dfn}, \ref{star_dfn} and 
\ref{plt_dfn}. Moreover, let us set 
$a:=a^F$, $b:=b^F$, $\varepsilon:=\varepsilon(F)$, 
$A_i:=A_{(X, \Delta)}(E_i)$ $(1\leq i\leq m)$ and $A:=A_{(X, \Delta)}(F)$ for simplicity. 
For any $x\in[0$, $\varepsilon]$, we have 
\[
\vol(L-xF)=2(2-\delta)^2-\frac{x^2}{ab}. 
\]
From Proposition \ref{convex_prop}, we have 
\begin{eqnarray*}
&&\frac{1}{(L^{\cdot 2})}\int_0^\infty\vol(L-xF)dx\\
&\leq&\frac{1}{2(2-\delta)^2}\Biggl(\int_0^\varepsilon\left(2(2-\delta)^2-\frac{x^2}{ab}\right)dx\\
&&+\int_\varepsilon^{\varepsilon+\frac{2ab(2-\delta)^2
-\varepsilon^2}{\varepsilon}}\left(2(2-\delta)^2
-\frac{\varepsilon^2}{ab}\right)
\left(\frac{-\varepsilon(x-\varepsilon)}{2ab(2-\delta)^2-\varepsilon^2}
+1\right)^2dx\Biggr)\\
&=&\frac{1}{3\varepsilon}\left(2ab(2-\delta)^2+\varepsilon^2\right). 
\end{eqnarray*}
Therefore we get the inequality 
\[
\hat{\beta}_{(X, \Delta)}(F)\geq 1-\frac{2ab(2-\delta)^2+\varepsilon^2}{3A\varepsilon}.
\]

\textbf{Step 4.}
We consider the case $m=1$, i.e., $a=b=1$ and $A=2-\delta$. Since $Y$ is the 
del Pezzo surface of degree $7$, we can easily show that 
\[
\vol(L-xF)=\begin{cases}
2(2-\delta)^2-x^2 & \text{if }x\in[0, 2-\delta], \\
\left(2(2-\delta)-x\right)^2 & \text{if } x\in[2-\delta, 2(2-\delta)]. 
\end{cases}\]
Thus we get the equality $\hat{\beta}_{(X, \Delta)}(F)=0$. In fact, by Proposition 
\ref{PP_prod_prop}, the divisor $F$ is a product-type prime divisor over $(X, \Delta)$. 
In particular, we have proved Theorem \ref{FF_thm} \eqref{FF_thm2}. 

\textbf{Step 5.}
We consider the case $m\geq 2$. Assume that $p_2\not\in C^{X_1}$. 
Then we can inductively show that $A_i=a_i+b_i-b_i\delta$. In particular, we have 
$A=a+b-b\delta$. By Step 2, we have 
\[
\hat{\beta}_{(X, \Delta)}(F)\geq 1-\frac{2-\delta}{2}\cdot\frac{a+b}{a+b-b\delta}
=\frac{(a-b)\delta}{2(a+b-b\delta)}>0. 
\]
Thus we may assume that $m\geq 2$ and $p_2\in C^{X_1}$. 

\textbf{Step 6.}
Let us set 
\[
j_C:=\max\{2\leq i\leq k\,\,|\,\, E_i\cap C^{X_i}\neq \emptyset\}. 
\]
Then we can inductively show that 
\[
\pi^*C=\begin{cases}
C^{\tilde{X}}+\sum_{i=1}^{j_C}i\tilde{E}_i+\sum_{i=j_C+1}^m j_C b_i\tilde{E}_i & 
\text{if }j_C<k, \\
C^{\tilde{X}}+\sum_{i=1}^ma_i\tilde{E}_i & 
\text{if }j_C=k.
\end{cases}\]
As in the argument in Step 7 for the proof of Theorem \ref{F_thm}, we have 
$A=a+b-\min\{j_Cb$, $a\}\delta$. If $2j_C\leq k$, then, from Step 2, we have 
\begin{eqnarray*}
\hat{\beta}_{(X, \Delta)}(F)&\geq& 1-\frac{2-\delta}{2}\cdot\frac{a+b}{a+b-j_Cb\delta}\\
&=&\frac{\delta(a+b-2j_Cb)}{2(a+b-j_Cb\delta)}
\geq\frac{\delta(a-(k-1)b)}{2(a+b-j_Cb\delta)}>0.
\end{eqnarray*}
Thus we may further assume that $2j_C>k$. 

\textbf{Step 7.}
Assume that $k=2$. Then we have $2b\geq a$ and $A=a+b-a\delta$. By 
Lemma \ref{plt_lem} \eqref{plt_lem4}, we have 
\[
\left(\left(\pi^*L-\frac{x}{f}E_m^*\right)\cdot l_1^{\tilde{X}}\right)=2-\delta-\frac{x}{a}.
\]
Thus we have $\varepsilon\leq a(2-\delta)$. Since $C^{\tilde{X}}$ is nef and 
$\pi^*C=C^{\tilde{X}}+\sum_{i=1}^ma_i\tilde{E}_i$, 
we have $\varepsilon=a(2-\delta)$ by Lemma \ref{star_lem} \eqref{star_lem5}. 
By Step 3, we get 
\[
\hat{\beta}_{(X, \Delta)}(F)\geq 1-\frac{2ab(2-\delta)^2
+a^2(2-\delta)^2}{3(a+b-a\delta)a(2-\delta)}=\frac{(1-2\delta)(a-b)}{3(a+b-a\delta)}\geq 0. 
\]
When $\delta<1/2$, then we get $\hat{\beta}_{(X, \Delta)}(F)>0$. 

\textbf{Step 8.}
Thus we may further assume that $k\geq 3$. By Lemma \ref{plt_lem} \eqref{plt_lem4}, 
we have 
\[
\left(\left(\pi^*L-\frac{x}{f}E_m^*\right)\cdot C^{\tilde{X}}\right)=2(2-\delta)
-\frac{x}{ab}\min\{j_Cb,\,\, a\}.
\]
Thus we get 
\[
\varepsilon\leq \frac{2ab(2-\delta)}{\min\{j_Cb,\,\, a\}}.
\]
Assume that $j_C^2b\geq 2a$. Then, by the assumption $k\geq 3$, 
Lemma \ref{star_lem} \eqref{star_lem5} and Step 6, we have 
\[
\varepsilon=\frac{2ab(2-\delta)}{\min\{j_Cb,\,\, a\}}.
\]
Therefore, by Step 3, we have 
\[
\hat{\beta}_{(X, \Delta)}(F)\geq 
1-\frac{2ab(2-\delta)^2+\left(\frac{2ab(2-\delta)}{\min\{j_Cb,\,\,a\}}
\right)^2}{3(a+b-\min\{j_Cb,\,\,a\}\delta)\frac{2ab(2-\delta)}{\min\{j_Cb,\,\,a\}}}. 
\]
If $j_C=k$, then we have 
\[
\hat{\beta}_{(X, \Delta)}(F)\geq\frac{(1-2\delta)(a-b)}{3(a+b-a\delta)}\geq 0.
\]
When $j_C=k$ and $\delta<1/2$, we have $\hat{\beta}_{(X, \Delta)}(F)>0$. 
If $j_C<k$, then we have 
\begin{eqnarray*}
\hat{\beta}_{(X, \Delta)}(F)&\geq&\frac{3j_C(a+b)-2(2a+j_C^2b)
+2(a-j_C^2b)\delta}{3j_C(a+b-j_Cb\delta)}\\
&\geq&\frac{3j_C(a+b)-2(2a+j_C^2b)+a-j_C^2b}{3j_C(a+b-j_Cb\delta)}\\
&=&\frac{(j_C-1)(a-j_Cb)}{j_C(a+b-j_Cb\delta)}>0.
\end{eqnarray*}

\textbf{Step 9.}
Thus we can further assume that $j_C^2b<2a$. Since we have already assumed that 
$2j_C>k$, we get 
\[
\frac{j_C^2}{2}<\frac{a}{b}<2j_C. 
\]
This implies that $(j_C$, $k)=(2$, $3)$ or $(3$, $5)$. Moreover, if $(j_C$, $k)=(3$, $5)$, 
then we may assume that $a/b>9/2$. Let $\lambda\colon X_1\to\pr^2$ be the 
birational morphism contracting $l_1^{X_1}$ and $l_2^{X_1}$. 

\textbf{Step 10.}
Assume that $(j_C$, $k)=(2$, $3)$. Then we can uniquely find the line $l$ on $\pr^2$ 
with $l^{X_3}\cap E_3\neq \emptyset$. Since $a\leq 3b$, we have 
\[
\left(\left(\pi^*L-\frac{x}{f}E_m^*\right)\cdot l^{\tilde{X}}\right)=2(2-\delta)-\frac{x}{b}
\]
by Lemma \ref{plt_lem} \eqref{plt_lem4}. Thus we have $\varepsilon\leq 2b(2-\delta)$. 
Moreover, we can inductively show that 
\[
\pi^*\left(l^{X_0}\right)=l^{\tilde{X}}+\sum_{i=1}^ma_i\tilde{E}_i. 
\]
By Lemma \ref{star_lem} \eqref{star_lem5}, we get $\varepsilon=2b(2-\delta)$. 
Thus, from Step 3, we have 
\begin{eqnarray*}
\hat{\beta}_{(X, \Delta)}(F)&\geq&
1-\frac{2ab(2-\delta)^2+\left(2b(2-\delta)\right)^2}{3(a+b-2b\delta)\cdot 2b(2-\delta)}\\
&=&\frac{a-b+\delta(a-4b)}{3(a+b-2b\delta)}
\geq\frac{a-2b}{2(a+b-2b\delta)}>0. 
\end{eqnarray*}

\textbf{Step 11.}
Assume that $(j_C$, $k)=(3$, $5)$ and $a/b>9/2$. It is well-known that there exists a 
unique smooth conic $D$ on $\pr^2$ such that $D^{X_5}\cap l_1^{X_5}\neq \emptyset$ 
and $D^{X_5}\cap E_5\neq\emptyset$. By Lemma \ref{plt_lem} \eqref{plt_lem4}, we have 
\[
\left(\left(\pi^*L-\frac{x}{f}E_m^*\right)\cdot D^{\tilde{X}}\right)=3(2-\delta)-\frac{x}{b}.
\]
Thus we get $\varepsilon\leq 3b(2-\delta)$. On the other hand, we know that 
\[
\pi^*C=C^{\tilde{X}}+\tilde{E}_1+2\tilde{E}_2+\sum_{i=3}^m3b_i\tilde{E}_i
\]
and 
\[
\left(\left(\pi^*C-\frac{3}{a}E_m^*\right)\cdot C^{\tilde{X}}\right)=2-\frac{9b}{a}>0.
\]
Thus we get $\varepsilon=3b(2-\delta)$ by Lemma \ref{star_lem} \eqref{star_lem5}. 
Hence we have 
\begin{eqnarray*}
\hat{\beta}_{(X, \Delta)}(F)&\geq&
1-\frac{2ab(2-\delta)^2+\left(3b(2-\delta)\right)^2}{3(a+b-3b\delta)\cdot 3b(2-\delta)}\\
&=&\frac{5a-9b+2\delta(a-9b)}{9(a+b-3b\delta)}\geq \frac{2(a-3b)}{3(a+b-3b\delta)}>0.
\end{eqnarray*}

As a consequence, we have completed the proof of Theorem \ref{FF_thm}. 
\end{proof}

\section{Proof of Corollary \ref{maincor}}\label{maeda_section}

In this section, we prove Corollary \ref{maincor}. 
We recall the result of Maeda. 
We set $\F_m:=\pr_{\pr^1}\left(\sO\oplus\sO(m)\right)$ ($m\geq 0$) and let 
$e\subset\F_m$ be a section of $\F_m\to\pr^1$ with the self intersection number $-m$, 
let $l\subset\F_m$ be a fiber of $\F_m\to\pr^1$, and let $e_\infty\subset\F_m$ 
is a section of $\F_m\to\pr^1$ with the self intersection number $m$. 

\begin{thm}[{\cite{maeda}}]\label{maeda_thm}
Let $X$ be a smooth projective surface and let $D$ be a nonzero effective reduced 
simple normal crossing divisor on $X$ with $-(K_X+D)$ ample. Then $(X, D)$ is 
isomorphic to one of 
$(\pr^2$, line$)$, 
$(\pr^2$, the union of two distinct lines$)$,
$(\pr^2$, smooth conic$)$, 
$(\pr^1\times\pr^1$, diagonal$)$, 
$(\F_1$, $e_\infty)$, 
$(\F_m$, $e)$, or 
$(\F_m$, $e+l)$.
\end{thm}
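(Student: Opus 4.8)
The plan is to classify these pairs directly by the Picard number $\rho(X)$, after first extracting two structural facts. First I would note that $-K_X=-(K_X+D)+D$ is the sum of an ample and an effective divisor, hence big; a smooth projective surface with $-K_X$ big is rational, so $X$ is rational. Next I would read off the shape of $D$ from adjunction. Writing $D=\sum_i D_i$ and combining $-(K_X+D)\cdot D_i>0$ with $(K_X+D_i)\cdot D_i=2p_a(D_i)-2$ gives
\[
0> (K_X+D)\cdot D_i = 2p_a(D_i)-2+\sum_{j\ne i}(D_i\cdot D_j).
\]
Since $2p_a(D_i)-2\ge -2$ and $\sum_{j\ne i}(D_i\cdot D_j)\ge 0$, this forces $p_a(D_i)=0$, so $D_i\cong\pr^1$, and $\sum_{j\ne i}(D_i\cdot D_j)\le 1$. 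Hence every component is a smooth rational curve meeting the rest of $D$ in at most one point: the dual graph of $D$ has all vertices of degree $\le 1$, so $D$ is a disjoint union of single $\pr^1$'s and of pairs of $\pr^1$'s crossing transversally at one point, and $-(K_X+D)\cdot D_i=2-\sum_{j\ne i}(D_i\cdot D_j)\in\{1,2\}$.

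Granting the bound $\rho(X)\le 2$ (the crux, below), $X$ is $\pr^2$ or a Hirzebruch surface $\F_m$, and I would finish by enumeration. On $\pr^2$, $-(K_X+D)=(3-\deg D)H$ is ample iff $\deg D\le 2$, and the reduced $\Supp$-normal-crossing divisors of degree $1$ or $2$ are exactly a line, two distinct lines, and a smooth conic. On $\F_m$, writing $\Pic\F_m=\Z e\oplus\Z l$ and testing ampleness of $-(K_{\F_m}+D)$ on the extremal rays spanned by $e$ and $l$, the divisors $D$ compatible with the degree-$\le1$ structure above are precisely $e$, $e+l$, and, only when $m=1$, $e_\infty$ (ampleness on $e$ forces $m<2$, and $m=0$ merely reproduces the $\F_0$ cases). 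This reproduces the stated list.

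For $\rho(X)\le 2$ I would argue by contraction. If $\rho(X)\ge 3$, pick a $(-1)$-curve $E$ and let $\phi\colon X\to X'$ contract it. The key point is that $E\not\subset\Supp D$ forces $E\cap D=\emptyset$: from $-(K_X+D)\cdot E=1-(D\cdot E)>0$ and $D\cdot E\ge 0$ one gets $D\cdot E=0$. In either case a short computation with $K_X=\phi^*K_{X'}+E$ shows $-(K_{X'}+D')$ (with $D':=\phi_*D$) is again ample by Nakai--Moishezon, $D'$ stays reduced with normal crossings and the same degree-$\le1$ structure, and $D'\ne 0$ unless $D$ is a single $(-1)$-curve; that last case is handled separately, since there $-K_X$ is ample and a direct del Pezzo argument (every $(-1)$-curve meets another once $K_X^2\le 7$) forces $X=\F_1$, $D=e$. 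Thus $(X',D')$ is again Maeda-type with $\rho(X')=\rho(X)-1\ge 2$. Iterating down to $\rho=2$ and using the enumeration, $(X',D')$ is in the list; but no $\rho=2$ member admits such a blow-up. Indeed each $\F_m$ in the list carries a ruling $l$ with $-(K_{\F_m}+D')\cdot l=1$, and writing $-(K_X+D)=\phi^*\bigl(-(K_{X'}+D')\bigr)-cE$ with $c=1-\mult_p D'+\eta\in\Z$ (here $\eta\in\{0,1\}$ records whether $E\subset D$), ampleness against $E$ gives $c>0$ while ampleness against the strict transform of the fiber through $p$ gives $c<1$; since $c\in\Z$ this is impossible. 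The contradiction yields $\rho(X)\le 2$.

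I expect the main obstacle to be exactly this last step: checking cleanly that contracting a $(-1)$-curve preserves all three Maeda-type conditions (ampleness, reducedness, and the crossing structure of $D'$) and, dually, that the three $\rho=2$ families admit no Maeda-type blow-up. The integrality of $c$ together with the uniform value $-(K_{\F_m}+D')\cdot l=1$ is what makes the obstruction sharp; by comparison the rationality and adjunction reductions are routine.
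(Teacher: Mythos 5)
The paper does not actually prove this statement: it is quoted from Maeda's paper [Mae86] as a known classification, so there is no internal proof to compare against, and your argument has to stand on its own. It essentially does. Your architecture (rationality of $X$ from bigness of $-K_X$, adjunction forcing every component of $D$ to be a smooth rational curve meeting the rest of $D$ in at most one point, descent by contracting a $(-1)$-curve, and the integrality obstruction $0<c<1$, $c\in\Z$ ruling out Maeda-type blow-ups of the $\rho=2$ models) is sound; I checked the numerical steps ($-(K_X+D)\cdot E=c$, $-(K_X+D)\cdot\tilde{l}=1-c$, preservation of ampleness under contraction via Nakai--Moishezon, and the exceptional case $D=E$ a single $(-1)$-curve forcing $X=\F_1$) and they all work. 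This is in the same spirit as, but more elementary than, Maeda's original argument, which runs extremal-ray theory for the log pair and is set up to work in dimension three as well; your integrality trick is a clean surface-specific substitute. Note also that the rationality input ($-K_X$ big $\Rightarrow$ $X$ rational) is exactly the fact this paper invokes elsewhere with a citation to [Nak07], so quoting it is consistent with the paper's own standards.

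Two points need tightening. First, your $\F_m$ enumeration as literally written (``precisely $e$, $e+l$, and, only when $m=1$, $e_\infty$'') omits the \emph{irreducible} curves in the class $e+l$: ampleness of $-(K_{\F_m}+D)$ against $l$ and $e$ gives $D\equiv ae+bl$ with $a\le 1$, $b\le 1$, and the case $a=b=1$ with $D$ irreducible exists exactly for $m\le 1$, giving $e_\infty$ on $\F_1$ but the \emph{diagonal} on $\F_0$ (which is not $e_\infty$ there, since $e_\infty$ on $\F_0$ has self-intersection $0$, not $2$). Without spelling this out, the pair $(\pr^1\times\pr^1,\text{diagonal})$ is not ``reproduced'' by your list, so the enumeration should be restated case by case in the class $ae+bl$. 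Second, the two standard facts you quote without proof --- big anticanonical divisor implies rational, and every $(-1)$-curve on a del Pezzo surface of degree $\le 7$ meets another $(-1)$-curve --- are true but should be cited or given one-line arguments (the latter follows, e.g., from the Weyl-group transitivity on $(-1)$-classes, or from exhibiting $L-E_1-E_2$ once there are at least two blown-up points). Neither point is a gap in the logic, only in the write-up.
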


Corollary \ref{maincor} is an immediate consequence of Theorems \ref{F_thm}, 
\ref{FF_thm}, \ref{maeda_thm} and \cite[Theorem 1.2]{BB} for example. 
We give an elemental proof of Corollary \ref{maincor} for the readers' convenience. 

\begin{proof}[Proof of Corollary \ref{maincor}]
Assume that $X=\F_m$ and $\Delta=\delta_1 e+\delta_2 l$ with 
$\delta_1\in(0$, $1)\cap\Q$ and $\delta_2\in[0$, $1)\cap\Q$. Then the pair 
$(X, \Delta)$ is a log del Pezzo pair if and only if $m+2-\delta_2>m(2-\delta_1)$. 
The $\R$-divisor $L-xe\sim_\R(2-\delta_1-x)e+(m+2-\delta_2)l$ for $x\in\R_{\geq 0}$ is 
nef if and only if $x\leq 2-\delta_1$. Thus we have 
\begin{eqnarray*}
\hat{\beta}_{(X, \Delta)}(e)&=&1-\frac{\int_0^{2-\delta_1}
((L-xe)^{\cdot 2})dx}{A_{(X, \Delta)}(F)
(L^{\cdot 2})}\\
&=&\frac{2m\delta_1-2m\delta_1^2-6\delta_1+3\delta_1\delta_2
-2m}{3(1-\delta_1)(m\delta_1+4-2\delta_2)}.
\end{eqnarray*}
If $m=0$, then we can immediately show that $\hat{\beta}_{(X, \Delta)}(e)<0$; 
if $m\geq 1$, then 
$\hat{\beta}_{(X, \Delta)}(e)<0$ since 
\begin{eqnarray*}
&&2m\delta_1-2m\delta_1^2-6\delta_1+3\delta_1\delta_2-2m\\
&=&-2m\left(\delta_1-\frac{2m-6+3\delta_2}{4m}\right)^2\\
&&-\frac{3\left(2m+3(2-\delta_2)\right)\left(2m-(2-\delta_2)\right)}{8m}<0.
\end{eqnarray*}

Assume that $X=\F_1$ and $\Delta=\delta e_\infty$ with $\delta\in[0$, $1)\cap\Q$. 
The $\R$-divisor $L-xe\sim_\R(2-\delta-x)e+(3-\delta)l$ for $x\in\R_{\geq 0}$ is 
nef if and only if $x\leq 2-\delta$. Thus we have 
\[
\hat{\beta}_{(X, \Delta)}(e)=\frac{-2(1-4\delta+\delta^2)}{3(4-\delta)}.
\]
If $\delta<2-\sqrt{3}$, then $\hat{\beta}_{(X, \Delta)}(e)<0$. Similarly, 
The $\R$-divisor $L-xe_\infty\sim_\R(2-\delta-x)e+(3-\delta-x)l$ for $x\in\R_{\geq 0}$ 
is nef if and only if $x\leq 2-\delta$. Thus we have 
\[
\hat{\beta}_{(X, \Delta)}(e_\infty)=\frac{2(1-4\delta+\delta^2)}{3(4-\delta)(1-\delta)}.
\]
If $\delta>2-\sqrt{3}$, then we have $\hat{\beta}_{(X, \Delta)}(e_\infty)<0$. 
Since $\delta\in\Q$, 
the pair $(X, \Delta)$ is not K-semistable for any $\delta\in[0$, $1)\cap\Q$. 

Assume that $X=\pr^2$ and $\Delta=\delta_1 l_1+\delta_2 l_2$ with 
$l_1$, $l_2$ distinct lines, $\delta_1$, $\delta_2\in[0$, $1)\cap\Q$, $\delta_1\leq \delta_2$  
and $(\delta_1$, $\delta_2)\neq(0$, $0)$. Then we can immediately get the 
inequality
\[
\hat{\beta}_{(X, \Delta)}(l_2)=\frac{-\delta_2-\left(\delta_2-\delta_1\right)}{3(1-\delta_2)}<0. 
\]

Together with Theorems \ref{F_thm} and \ref{FF_thm}, we get the assertion. 
\end{proof}


\begin{thebibliography}{99}

\bibitem[BCHM10]{BCHM}
C.\ Birkar, P.\ Cascini, C.\ D.\ Hacon and J.\ M\textsuperscript{c}Kernan, 
\emph{Existence of minimal models for varieties of log general type}, 
J.\ Amer.\ Math.\ Soc.\ \textbf{23} (2010), no.\ 2, 405--468.

\bibitem[BB13]{BB}
R.\ Berman and B.\ Berndtsson, \emph{Real Monge-Amp\`ere equations and 
K\"ahler-Ricci solitons on toric log Fano varieties}, Ann.\ Fac.\ Sci.\ Toulouse Math.\ 
\textbf{22} (2013), no.\ 4, 649--711.

\bibitem[Ber16]{B}
R.\ Berman, \emph{K-polystability of Q-Fano varieties admitting 
K\"ahler-Einstein metrics}, 
Invent.\ Math.\ \textbf{203} (2016), no.\ 3, 973--1025.

\bibitem[BFJ09]{BFJ}
S.\ Boucksom, C.\ Favre and M.\ Jonsson, \emph{Differentiability of volumes of 
divisors and a problem of Teissier}, 
J.\ Algebraic Geom.\ \textbf{18} (2009), no.\ 2, 279--308. 

\bibitem[BJ17]{BJ}
H.\ Blum and M.\ Jonsson, \emph{Thresholds, valuations, and K-stability}, 
arXiv:1706.04548v1. 

\bibitem[Blu16]{blum}
H.\ Blum, \emph{Existence of Valuations with Smallest Normalized Volume}, 
arXiv:1606.08894v3; to appear in Compos.\ Math. 

\bibitem[CDS15a]{CDS1}
X.\ Chen, S.\ Donaldson and S.\ Sun, \emph{K\"ahler-Einstein metrics on 
Fano manifolds, 
I: approximation of metrics with cone singularities}, 
J.\ Amer.\ Math.\ Soc.\ \textbf{28} (2015), no.\ 1, 183--197.

\bibitem[CDS15b]{CDS2}
X.\ Chen, S.\ Donaldson and S.\ Sun, \emph{K\"ahler-Einstein metrics on 
Fano manifolds, 
II: limits with cone angle less than $2\pi$}, 
J.\ Amer.\ Math.\ Soc.\ \textbf{28} (2015), no.\ 1, 199--234.

\bibitem[CDS15c]{CDS3}
X.\ Chen, S.\ Donaldson and S.\ Sun, \emph{K\"ahler-Einstein metrics on 
Fano manifolds, 
III: limits as cone angle approaches $2\pi$ and completion of the main proof}, 
J.\ Amer.\ Math.\ Soc.\ \textbf{28} (2015), no.\ 1, 235--278.

\bibitem[CLS11]{CLS}
D.\ Cox, J.\ Little and H.\ Schenck, \emph{Toric varieties}, Graduate Studies in 
Mathematics, \textbf{124}. American Mathematical Society, Providence, RI, 2011. 

\bibitem[CR15]{CR}
I.\ A.\ Cheltsov and Y.\ A.\ Rubinstein, 
\emph{Asymptotically log Fano varieties}, Adv.\ Math.\ \textbf{285} (2015), 1241--1300. 

\bibitem[Don02]{don}
S.\ Donaldson, \emph{Scalar curvature and stability of toric varieties}, 
J.\ Differential Geom.\ \textbf{62} (2002), no.\ 2, 289--349. 

\bibitem[FO16]{FO}
K.\ Fujita and Y.\ Odaka, \emph{On the K-stability of Fano varieties and anticanonical 
divisors}, arXiv:1602.01305v2; accepted by Tohoku Math.\ J.

\bibitem[Fuj16]{vst}
K.\ Fujita, \emph{A valuative criterion for uniform K-stability of $\mathbb{Q}$-Fano 
varieties}, 
J.\ Reine Angew.\ Math., DOI: 10.1515/crelle-2016-0055.

\bibitem[Fuj17a]{pltK}
K.\ Fujita, \emph{Uniform K-stability and plt blowups of log Fano pairs}, 
arXiv:1701.00203v1; accepted by Kyoto J.\ Math.

\bibitem[Fuj17b]{ha}
K.\ Fujita, \emph{K-stability of log Fano hyperplane arrangements}, 
arXiv:1709.08213v1.

\bibitem[HK00]{HK}
Y.\ Hu and S.\ Keel, \emph{Mori dream spaces and GIT}, 
Michigan Math.\ J.\ \textbf{48} (2000), 331--348.

\bibitem[Ish04]{ishii}
S.\ Ishii, \emph{Extremal functions and prime blow-ups}, 
Comm.\ Algebra \textbf{32} (2004), no.\ 3, 819--827.

\bibitem[JM12]{JM}
M.\ Jonsson and M.\ Musta\c{t}\u{a}, \emph{Valuations and asymptotic invariants for 
sequences of ideals}, Ann.\ Inst.\ Fourier (Grenoble) \textbf{62} (2012), no.\ 6, 
2145--2209. 

\bibitem[Kem78]{kempf}
G.\ Kempf, \emph{Instability in invariant theory}, 
Ann.\ of Math.\ \textbf{108} (1978), no.\ 2, 299--316. 

\bibitem[KKL16]{KKL}
A.-S.\ Kaloghiros, A.\ K\"uronya and V.\ Lazi\'c, \emph{Finite generation and 
geography of models}, Minimal models and extremal rays (Kyoto, 2011), 215--245, 
Adv.\ Stud.\ Pure Math., \textbf{70}, Math.\ Soc.\ Japan, [Tokyo], 2016.

\bibitem[KM98]{KoMo}
J.\ Koll{\'a}r and S.\ Mori, \emph{Birational geometry of algebraic varieties},
With the collaboration of C.\ H.\ Clemens and A.\ Corti. 
Cambridge Tracts in Math., \textbf{134},
Cambridge University Press, Cambridge, 1998.

\bibitem[Laz04a]{L1}
R.\ Lazarsfeld, \emph{Positivity in algebraic geometry, I: Classical setting: line bundles 
and linear series}, Ergebnisse der Mathematik und ihrer Grenzgebiete.\ (3) 
\textbf{48}, Springer, Berlin, 2004.

\bibitem[Laz04b]{L2}
R.\ Lazarsfeld, \emph{Positivity in algebraic geometry, II: Positivity for Vector Bundles, 
and Multiplier Ideals}, Ergebnisse der Mathematik und ihrer Grenzgebiete.\ (3) 
\textbf{49}, Springer, Berlin, 2004.

\bibitem[Li17]{li}
C.\ Li, \emph{K-semistability is equivariant volume minimization}, 
Duke Math.\ J.\ \textbf{166} (2017), no.\ 16, 3147--3218.

\bibitem[LM09]{LM}
R.\ Lazarsfeld and M.\ Musta\c{t}\u{a}, \emph{Convex bodies associated 
to linear series}, 
Ann.\ Sci.\ \'Ec.\ Norm.\ Sup\'er.\ \textbf{42} (2009), no.\ 5, 783--835. 

\bibitem[LS14]{LS}
C.\ Li and S.\ Sun, \emph{Conical K\"ahler-Einstein metrics revised}, 
Comm.\ Math.\ Phys.\ \textbf{331} (2014), no.\ 3, 927--973.

\bibitem[LWX18]{LWX}
C.\ Li, X.\ Wang and C.\ Xu, \emph{Algebraicity of the metric tangent cones and 
equivariant K-stability}, arXiv:1805.03393v1.

\bibitem[LX14]{LX}
C.\ Li and C.\ Xu, \emph{Special test configuration and K-stability of Fano varieties}, 
Ann.\ of Math.\ \textbf{180} (2014), no.\ 1, 197--232. 

\bibitem[Mae86]{maeda}
H.\ Maeda, \emph{Classification of logarithmic Fano threefolds}, 
Compositio Math.\ \textbf{57} (1986), no.\ 1, 81--125. 

\bibitem[Nak07]{N}
N.\ Nakayama, \emph{Classification of log del Pezzo surfaces of index two}, 
J.\ Math.\ Sci.\ Univ.\ Tokyo \textbf{14} (2007), no.\ 3, 293--498.

\bibitem[Pro00]{prokhorov}
Y.\ Prokhorov, \emph{Blow-ups of canonical singularities}, 
Algebra (Moscow, 1998), 301--317, de Gruyter, Berlin, 2000.

\bibitem[Pro01]{prokhorov_MSJ}
Y.\ Prokhorov, \emph{Lectures on complements on log surfaces}, 
MSJ Memoirs, \textbf{10}. Mathematical Society of Japan, Tokyo, 2001. 

\bibitem[Sho96]{shokurov}
V.\ V.\ Shokurov, \emph{$3$-fold log models}, Algebraic geometry, 4. 
J.\ Math.\ Sci.\ \textbf{81} (1996), 
no.\ 3, 2667--2699.

\bibitem[Sto09]{stoppa}
J.\ Stoppa, \emph{K-stability of constant scalar curvature K\"ahler manifolds}, 
Adv.\ Math.\ \textbf{221} (2009), no.\ 4, 1397--1408. 

\bibitem[Tia97]{tian}
G.\ Tian, \emph{K\"ahler-Einstein metrics with positive scalar curvature}, 
Invent.\ Math.\ \textbf{130} (1997), no.\ 1, 1--37.

\bibitem[Tia15]{tian2}
G.\ Tian, \emph{K-stability and K\"ahler-Einstein metrics}, 
Comm. Pure Appl.\ Math.\ \textbf{68} (2015), no.\ 7, 1085--1156. 

\bibitem[TVAV11]{TVAV}
D.\ Testa, A.\ V\'arilly-Alvarado and M.\ Velasco, \emph{Big rational surfaces}, 
Math.\ Ann.\ \textbf{351} (2011), no.\ 1, 95--107.

\end{thebibliography}
\end{document}